\definecolor{forestgreen}{cmyk}{0.91,0,0.88,0.12}
\def\eps{\varepsilon}
\def\A {\mathbb{A}}
\def\B {\mathfrak{B}}
\def\R {\mathbb{R}}
\def\H {\mathcal{H}}
\def\HH {H}
\def\CC {\mathbb{C}}
\def\D {{\rm dom}}
\def\ZZ {\mathfrak{Z}}
\def\Re {\mathfrak{Re\,}}
\def\e{{\rm e}}
\def\ii{{\rm i}}
\def\d{{\rm d}}
\def\ddt{\frac{\d}{\d t}}
\def \ds {\displaystyle}
\def \and {{\qquad\text{and}\qquad}}
\def \norm {\boldsymbol{|}}
\def \l {\langle}
\def \r {\rangle}
\def \wws {9cm}
\def \wwp {7cm}
\def \au {\rm}
\def \ti {\it}
\def \jou {\rm}
\def \bk {\it}
\def \no#1#2#3 {{\bf #1} (#3), #2.}
\def \eds#1#2#3 {#1, #2, #3.}
\newtheorem{proposition}{Proposition}[section]
\newtheorem{theorem}[proposition]{Theorem}
\newtheorem{corollary}[proposition]{Corollary}
\newtheorem{lemma}[proposition]{Lemma}
\theoremstyle{definition}
\newtheorem{remark}[proposition]{Remark}
\numberwithin{equation}{section}
\title[Second order equations with dissipation]
{Second order linear evolution\\ equations with general dissipation}
\author[F. Dell'Oro and V. Pata]
{Filippo Dell'Oro and Vittorino Pata}
\address{Politecnico di Milano - Dipartimento di Matematica
\newline\indent
Via Bonardi 9, 20133 Milano, Italy}
\email{filippo.delloro@polimi.it}
\email{vittorino.pata@polimi.it}
\subjclass[2000]{35B35, 47D06, 35P05}
\keywords{Second order equations, contraction semigroup, spectral theory, stability, semiuniform stability, exponential stability, decay rate}
\begin{document}

\begin{abstract}
The contraction semigroup $S(t)=\e^{t\A}$ generated by the abstract linear dissipative evolution equation
$$
\ddot u + A u + f(A) \dot u=0
$$
is analyzed,
where $A$ is a strictly positive selfadjoint operator and $f$ is an arbitrary nonnegative
continuous function on the spectrum of $A$.
A full description of the spectrum of the infinitesimal generator $\A$ of $S(t)$ is provided.
Necessary and sufficient conditions for the stability, the semiuniform stability and the exponential stability of the semigroup
are found, depending on the behavior of $f$
and the spectral properties of its zero-set.
Applications to wave, beam and plate equations with fractional damping are also discussed.
\end{abstract}

\maketitle

\smallskip
\begin{center}
\begin{minipage}{12cm}
\small
\tableofcontents
\end{minipage}
\end{center}
\newpage

\section{Introduction}

\noindent
Let $(\HH,\langle \cdot , \cdot \rangle, \|\cdot\|)$ be a separable complex Hilbert space, and let
$$
A :\D(A) \subset \HH \to \HH
$$
be a strictly positive selfadjoint linear operator with inverse $A^{-1}$ not necessarily compact.
Let also
$$f:\sigma(A)\to[0,\infty)$$
be a nonnegative continuous function on the spectrum $\sigma(A)$ of $A$.
Since $A$ is strictly positive selfadjoint, $\sigma(A)$ is a nonempty closed subset of $\R^+=(0,\infty)$.
Moreover, $\sigma(A)$
is compact if and only if $A$ is a bounded operator on $\HH$.

For $t>0$, we consider the abstract second order evolution equation in the unknown variable $u=u(t)$
\begin{equation}
\label{MAIN}
\ddot u + A u + f(A) \dot u=0,
\end{equation}
where $u(0)$ and $\dot u(0)$ are understood to be assigned initial data and the {\it dot} stands for derivative
with respect to $t$. Here, $f(A)$ is the selfadjoint operator constructed via the functional calculus of $A$,
namely,
$$
f(A) = \int_{\sigma(A)} f(s) \,\d E_A (s)
$$
being $E_A$ the spectral measure of $A$ (see e.g.\ \cite{Rudin}).
More details on the functional calculus will be given in Section \ref{SEC3}.

\smallskip
Equation \eqref{MAIN} falls within a general class of models introduced in \cite{CR} to account for the dissipative
mechanism acting in elastic systems. The operator $A$ is usually called {\it elastic operator}
while $f(A)$, replaced in \cite{CR} by a more general nonnegative selfadjoint operator $B$,
is called {\it dissipation operator}. In the last decades, these models have been the object
of intensive mathematical investigations, and nowadays the current
literature on the subject is rather vast. When the dissipation operator is comparable with
the power $A^\vartheta$ for some $\vartheta\in[0,1]$
of the elastic operator $A$ (i.e.\ when the function $f(s)$ controls and is controlled by $s^\vartheta$),
then the associated solution semigroup is known to be exponentially stable and, in addition,
analytic for $\vartheta \in [\tfrac12,1]$ and
of Gevrey type for $\vartheta \in (0,\tfrac12)$;
see e.g.\ \cite{CR,CT1,CT2,CTFractional,HUA1,HUA,HUA2} and the more recent
contributions \cite{HO,JT0,JT,LTBOOK,LTMatrix,LIU1,LIU2,MU}, among many others.
At the same time, when $\vartheta \notin [0,1]$, the exponential stability is lost.
In particular, for $\vartheta<0$, the solution semigroup is known to be semiuniformly stable (a notion of stability weaker
than the exponential one), with optimal
polynomial decay rate of order $\tfrac{1}{2|\vartheta|}$ (see \cite{FAT,LiuZAMP}). The case $\vartheta>1$ has been analyzed
in the very recent paper \cite{HTAMS}, where well-posedness and
further regularity properties of the solutions have been discussed.
The above-mentioned results are highly nontrivial, and require
the exploitation of several abstract tools from the theory of linear semigroups, combined with quite delicate sharp computations.

On the other hand, when the dissipation operator is not comparable with $A^\vartheta$, namely, when
the function $f$ is allowed to exhibit an arbitrary (and not necessarily polynomial) behavior,
the picture becomes even more challenging,
and additional difficulties arise. In this situation,
the literature about the longterm properties of equation \eqref{MAIN} is poorer and mainly devoted to
the study of conditions under which all the solutions decay exponentially
to zero (see e.g.\ \cite[Chapter VI]{ENGNAG} and the further papers \cite{BATK,GOLD1,GOLD,GRIN1,GRIN2}). Roughly speaking, these contributions
tell that exponential stability occurs whenever the following two assumptions hold
(plus possibly some extra conditions varying from paper to paper):
\begin{itemize}
\item[{\rm (i)}] the dissipation operator is bounded below, namely,
$\inf_{s\in\sigma(A)} f(s)>0$; and
\smallskip
\item[{\rm (ii)}] the dissipation operator is subordinate to $A$, namely,
$\sup_{s\in \sigma(A)} f(s)/ s <\infty$.
\end{itemize}
Note that within (i)
the function $f$ does not vanish on $\sigma(A)$.

\smallskip
In light of the discussion above two natural questions arise:
\begin{itemize}
\smallskip
\item[$\diamond$] {\it What can be said on
the stability of \eqref{MAIN} when
the dissipation operator is not necessarily comparable with $A^\vartheta$ and not
necessarily bounded below,
nor subordinate to $A$?}
\smallskip
\item[$\diamond$] {\it In particular, what happens when the function $f$ vanishes in some points of $\sigma(A)$?}
\end{itemize}

\smallskip
The aim of the present work is to address these issues. After proving the existence of the contraction semigroup $S(t)$ of solutions
for a general nonnegative continuous function $f$ [see Theorem~\ref{EU}],
we show that
$S(t)$ is always stable, i.e.\ all single trajectories decay to zero, provided that
the zero-set of $f$
\begin{equation}
\label{ZERO}
\ZZ = \big\{s \in \sigma(A) : \, f(s)=0 \big\}
\end{equation}
has null spectral measure
and is at most countable [see Theorem \ref{stability}]. In fact, this condition is sharp:\
when $\ZZ$ has positive spectral measure, solutions with positive constant energy pop up.
These results are attained via an explicit description of the spectrum of the infinitesimal
generator of the semigroup [see Theorems \ref{zerospect} and \ref{spec}].
Such a description, which seems to be new in the literature, besides having an interest by itself allows to prove the stability of $S(t)$
without assuming the compactness of the inverse operator $A^{-1}$ (or similar compactness conditions).
On the contrary, compactness conditions are typically used to apply the classical
Sz.-Nagy-Foias theory \cite{BEN,NF} or Jacobs-Glicksberg-deLeeuw-type theorems \cite[Chapter~5]{BattyLibroBis}.
In addition, we show that conditions (i)-(ii) above are actually necessary and sufficient in order for $S(t)$ to be exponentially stable
[see Theorem~\ref{stabexpteo}].
In particular, we provide an elementary proof
of the exponential stability of $S(t)$ which does not rely in any way on the linear structure of equation, and hence
can be exported to study nonlinear versions of \eqref{MAIN}.
We also analyze an intermediate notion of stability,
the so-called semiuniform stability, proving that $S(t)$ is semiuniformly stable if and only if the set $\ZZ$
is empty and assumption (ii) is satisfied [see Theorem \ref{tsus}]. Then,
we find the optimal polynomial semiuniform decay rate, again without assuming the compactness of the inverse operator $A^{-1}$
[see Theorems \ref{decayrateteo} and \ref{decayrateopt}].
We finally apply the results to some concrete physical models of waves,
beams and plates with fractional damping.

\section{Two Examples}
\label{seccont}

\noindent
In this section, we dwell on two particular (but relevant) instances of equation \eqref{MAIN}.
To this end, given a bounded domain $\Omega\subset \R^n$
with smooth boundary $\partial \Omega$, we introduce\footnote{It is understood that, in the real case, the results of this paper
apply by considering the natural complexifications of the involved spaces and operators.} 
the strictly positive selfadjoint Laplace-Dirichlet operator
on $L^2(\Omega)$
$$
L=-\Delta \qquad \text{with}\qquad \D(L) = H^2(\Omega) \cap H^1_0(\Omega),
$$
being $H^2(\Omega)$ and $H^1_0(\Omega)$ the standard Sobolev spaces on $\Omega$. For $r=0,1,2,\ldots,$ we also define the
Hilbert space (the index $r$ will be omitted whenever zero)
$$
V^r=\D(L^\frac{r}{2}),\qquad \l u,v\r_{V^r} = \langle L^\frac{r}{2} u
, L^\frac{r}{2} v \rangle_{L^2(\Omega)},\qquad \| u\|_{V^r}=\| L^\frac{r}{2} u \|_{L^2(\Omega)}.
$$
In particular,
$$V^2=H^2(\Omega)\cap H_0^1(\Omega)
\,\,\subset\,\,
V^1=H_0^1(\Omega)
\,\,\subset\,\,
V=L^2(\Omega).$$

\subsection{Abstract wave equations with fractional damping}
We consider the abstract
wave equation
\begin{equation}
\label{polyabseq}
\ddot u + A u + A^\vartheta \dot u=0,
\end{equation}
that is, equation \eqref{MAIN} with a damping of the form
$$f(s)=s^\vartheta,\quad\, \vartheta \in \R.$$
In particular, the values
$\vartheta=0,1$ yield to the so-called weakly damped wave equation
$$
\ddot u +A u + \dot u=0,
$$
and the so-called strongly damped wave equation
$$
\ddot u +A u + A \dot u=0,
$$
respectively. A concrete realization of \eqref{polyabseq} is the boundary-value problem
in the unknown variable $u=u(\boldsymbol{x},t):\Omega\times \R^+ \to \R$
\begin{equation}
\label{concvardamp}
\begin{cases}
\partial_{tt}u- \Delta u + (-\Delta)^\vartheta \partial_t u=0,\\
u(\boldsymbol{x},t)_{|\boldsymbol{x}\in\partial\Omega}=0,
\end{cases}
\end{equation}
corresponding to the choice $H=V$ and $A=L$.

\subsection{Beams and plates}
For $\vartheta \in \R$ and $\omega\geq0$, we consider the evolution equation
in the unknown variable $u=u(\boldsymbol{x},t):\Omega\times \R^+ \to \R$
\begin{equation}
\label{platefraca}
\partial_{tt}u -\omega \Delta\partial_{tt}u+ \Delta^2 u+(-\Delta)^\vartheta \partial_t u=0,
\end{equation}
complemented with the hinged boundary conditions
\begin{equation}
\label{platefracaBC}
u(\boldsymbol{x},t)_{|\boldsymbol{x}\in\partial\Omega}=\Delta u(\boldsymbol{x},t)_{|\boldsymbol{x}\in\partial\Omega}=0,
\end{equation}
which rules the dynamics of a hinged beam (for $n=1$) or plate (for $n=2$) subject
to fractional dissipation. According to the formalism introduced above, \eqref{platefraca}-\eqref{platefracaBC}
read
\begin{equation}
\label{platefrac}
\partial_{tt}u +\omega L\partial_{tt}u+ L^2 u+L^\vartheta \partial_t u=0.
\end{equation}
In order to rewrite the latter equation in the abstract form~\eqref{MAIN}, we shall treat separately
the two cases $\omega=0$ and $\omega>0$.

\smallskip
\noindent
$\bullet$ If $\omega=0$, i.e.\ the rotational inertia is neglected, equation \eqref{platefrac} reduces to
\begin{equation}
\label{nonrotrwe}
\partial_{tt}u + L^2 u+L^\vartheta \partial_t u=0.
\end{equation}
The above
is nothing but the particular realization of~\eqref{MAIN} corresponding to the choice $H=V$, $A=L^2$ with $\D(A)=V^4$, and
$$f(s)=s^\frac{\vartheta}{2}.$$

\noindent
$\bullet$ If $\omega>0$, equation \eqref{platefrac} can be rewritten as
\begin{equation}
\label{rotrwe}
\partial_{tt}u + (1+\omega L)^{-1} L^2 u+(1+\omega L)^{-1}L^\vartheta \partial_t u=0.
\end{equation}
Endowing the space $V^1=H_0^1(\Omega)$ with the equivalent Hilbert norm
$$
\norm u\norm_{V^1}=\| (1+\omega  L)^\frac{1}{2} u \|_{L^2(\Omega)},
$$
we now choose $H=V^1$. It is then readily seen that the linear operator on $H$ (with the norm above)
$$
A = (1+\omega L)^{-1} L^2 \qquad  \text{with}\qquad \D(A) = V^3
$$
is strictly positive selfadjoint. Moreover, calling
\begin{equation}
\label{effe}
f(s)= s \bigg(\frac{\omega s + \sqrt{\omega^2 s^2 + 4 s}}{2} \bigg)^{\vartheta-2},
\end{equation}
by means of direct calculations we find the equality
$$
f(A) = (1+\omega L)^{-1}L^\vartheta.
$$
In conclusion, within these choices, equation \eqref{rotrwe} takes the form \eqref{MAIN}.

\section{The Spectral Measure of $A$}
\label{SEC3}

\noindent
Along the paper, the functional calculus of $A$ will be extensively used.
Recall that a {\it spectral measure} on a closed set $\Omega\subset\R$ is a map
$$
E: \mathfrak{B}(\Omega) \to P(H),
$$
defined on the Borel $\sigma$-algebra $\mathfrak{B}(\Omega)$ of $\Omega$ with values in the
space $P(H)$ of selfadjoint projections in $H$, satisfying the following
properties:
\begin{itemize}
\item $E(\emptyset)=0\,$ and $\,E(\Omega)=1$.
\smallskip
\item $E(\omega_1\cap\omega_2)=E(\omega_1)E(\omega_2)$, for all $\omega_1,\omega_2\in \mathfrak{B}(\Omega)$.
\smallskip
\item If $\omega_1\cap\omega_2=\emptyset\,$ then
$\,E(\omega_1\cup\omega_2)=E(\omega_1)+E(\omega_2)$.
\smallskip
\item For every $u,v\in H$ the set function $\mu_{u,v}$ on $\B(\Omega)$ defined by
$$\mu_{u,v}(\omega)=\langle E(\omega)u,v\rangle$$
is a complex measure.
\end{itemize}
By the Spectral Theorem (see e.g.\ \cite{Rudin}), there exists a unique spectral measure $E_A$
on the set $\Omega=\sigma(A)$, called the spectral measure of $A$, such that
$$
\l A u , v \r = \int_{\sigma(A)} s \,\d \mu_{u,v}
$$
for all $u \in \D(A)$ and $v \in H$. The integral representation above is usually written for short as
$$
A = \int_{\sigma(A)} s \,\d E_A (s).
$$
In addition,
for every continuous complex-valued
function $\phi$ on $\sigma(A)$, we can define the linear operator $\phi(A)$ by
$$
\phi(A) = \int_{\sigma(A)} \phi(s) \,\d E_A (s)
$$
with dense domain
$$
\D(\phi(A)) = \Big\{u\in H :\int_{\sigma(A)} |\phi(s)|^2 \,\d \mu_{u,u} (s)<\infty \Big\}.
$$
It is well-known that $\phi(A)$ is a densely defined closed operator. Besides, $\phi(A)$ is
selfadjoint if and only if $\phi$ is real-valued. Further properties of $\phi(A)$ read as follows:
\begin{itemize}
\item for every $u \in \D(\phi(A))$, we have the equality
$$
\|\phi(A)u \|^2 =  \int_{\sigma(A)} |\phi(s)|^2 \,\d \mu_{u,u} (s).
$$
\item $\phi(A)$ is bounded if and only if $\phi$ is bounded.
In which case,
$$
\|\phi(A)\|= \sup_{s \in\sigma(A)}|\phi(s)|.
$$
\item $\phi(A)$ is bounded below if and only if
$$\inf_{s \in\sigma(A)}|\phi(s)|>0.
$$
\end{itemize}
It is apparent to see that $\D(\phi(A))$ endowed with the graph norm
$$
\|u\|_{\D(\phi(A))}^2 = \|u\|^2 + \|\phi(A)u\|^2
$$
is a Hilbert space. In particular, when $\phi(A)$ is bounded below, there exists $\mathfrak{c}>1$ such that
$$
\|u\|_{\D(\phi(A))}^2 \leq \mathfrak{c} \|\phi(A)u\|^2.
$$
Hence, the seminorm $\|\phi(A)u \|$
is actually a norm, equivalent to the graph norm.

\section{Functional Setting and Notation}

\noindent
For $r\in\R$, we define the family of Hilbert spaces ($r$ is always omitted whenever zero)
$$
\HH^r=\D(A^\frac{r}{2}),\qquad \langle u,v\rangle_r = \langle A^\frac{r}{2} u
, A^\frac{r}{2} v \rangle,\qquad \| u\|_r=\| A^\frac{r}{2} u \|.
$$
If $r>0$, it is understood that
$\HH^{-r}$ denotes the completion of the domain, so that
$\HH^{-r}$ is the dual space of $\HH^{r}$. Accordingly, the symbol $\l \cdot , \cdot \r$
will also stand for duality product between $\HH^{r}$ and $\HH^{-r}$.
Setting
$$
s_0= \min \{s :s\in \sigma(A)  \}>0,
$$
for every $r_1<r_2$ we have
the Poincar\'e inequality (which follows at once from the functional calculus)
\begin{equation}
\label{POINC}
\|u\|_{r_1}^2 \leq s_0^{r_1-r_2}\|u\|_{r_2}^2,\quad \forall u \in \HH^{r_2}.
\end{equation}
In particular, the continuous and dense (but not necessarily compact) inclusion
$$
\HH^{r_2} \subset \HH^{r_1}
$$
holds true. Along the paper, the Poincar\'e inequality, as well as the Young and H\"older inequalities,
 will be tacitly used several times.
We conclude by defining the phase space of our problem
$$
\H = \HH^1 \times \HH
$$
endowed with the standard Hilbert product norm
$$
\|(u,v)\|_\H^2 = \|u\|_1^2 + \|v\|^2.
$$

\section{The Linear Operator ${\A}$}

\noindent
In view of rewriting equation \eqref{MAIN} as a first order ODE on $\H$,
we introduce
the linear operator $\A:\D(\A)\subset\H\to\H$ defined as
$$
\A
\left(\begin{matrix}
u\\
v
\end{matrix}\right)
= \left(
\begin{matrix}
v\\
- A u - f(A)v
\end{matrix}
\right)
$$
with domain
$$
\D(\A)=
\left\{ (u,v)\in\H \left|
\begin{array}{c}
v\in \HH^1\\
Au + f(A)v \in \HH
\end{array}\right.
\right\}.
$$

\begin{remark}
As customary, as we did in the definition above of the domain of $\A$,
whenever a vector $u\in\HH$ does not belong to $H^2$ we still write $A u$ to mean
the element of the dual space $H^{-2}$ acting as
$$
(Au) [w] \doteq \l A w, u  \r,\quad \forall w \in H^2.
$$
Analogously, whenever a vector $v\in\HH$ does not belong to $\D(f(A))$, we still write $f(A)v$ to mean
the element of the dual space $\D(f(A))^*$ acting as
$$
(f(A)v) [w] \doteq \l f(A) w, v\r,\quad \forall w \in \D(f(A)).
$$
\end{remark}

\begin{remark}
It is readily seen that $\D(\A)$ is a dense subset of $\H$,
as it contains the dense subspace of $\H$
$$
\HH^2 \times \D(q(A))\qquad\text{where}\qquad q(s)= \sqrt{s + [f(s)]^2}.
$$
Besides, it is apparent to verify that if (and only if)
$\sup_{s\in\sigma(A)} f(s)/\sqrt{s}<\infty$
then the domain of $\A$ factorizes as
$$
\D(\A) = H^2 \times H^1.
$$
\end{remark}

\begin{remark}
\label{chebelremark}
The operator $\A$ is closed. This can either be proved directly, or deduced as a consequence of the fact that
$\A$ is the infinitesimal generator of a contraction semigroup (as shown in the following Theorem \ref{EU}).
Moreover, it is apparent that $\A$ is injective.
\end{remark}

If a pair $(u,v)$ belongs to the domain of $\A$, then the variables inherit additional
regularity. In particular, we have the following result.

\begin{lemma}
\label{regdom}
Let $(u,v)\in\D(\A)$ be arbitrarily given. Then
$$v\in \D(\sqrt{f(A)}).$$
\end{lemma}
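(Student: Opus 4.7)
The plan is to exploit the condition $g := Au + f(A)v \in \HH$ by testing it against spectrally truncated copies of $v$ itself, since the naive choice of test vector $w = v$ would produce the quantity $\|\sqrt{f(A)}v\|^2$, i.e.\ exactly what one wishes to control. For $n > s_0$, set $E_n := E_A(\sigma(A) \cap [s_0, n])$ and $v_n := E_n v$. Being spectrally supported in the compact set $\sigma(A) \cap [s_0, n]$, the truncated vector $v_n$ belongs to every $\HH^r$ and to $\D(\phi(A))$ for every continuous $\phi$ on $\sigma(A)$; moreover $E_n$ commutes with every function of $A$, hence is a contraction in each $\HH^r$.

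Next, for such a bona fide $v_n \in \HH^2 \cap \D(f(A))$, the two remarks preceding Lemma~\ref{regdom} allow us to unwind the distributional sum $Au + f(A)v$ and convert its pairing with $v_n$ into the honest identity
\begin{equation*}
\langle g, v_n\rangle = \langle u, Av_n\rangle + \langle v, f(A)v_n\rangle = \langle u, v_n\rangle_1 + \int_{\sigma(A) \cap [s_0, n]} f(s)\, \d \mu_{v,v}(s),
\end{equation*}
where the second equality uses selfadjointness of $A^{1/2}$ and the functional calculus identity $f(A)v_n = (f\,\chi_{[s_0,n]})(A)\,v$, with $f\,\chi_{[s_0,n]}$ bounded. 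Cauchy--Schwarz, together with the contractivity of $E_n$ in $\HH$ and $\HH^1$, then yields the uniform bound
\begin{equation*}
\int_{\sigma(A) \cap [s_0, n]} f(s)\, \d \mu_{v,v}(s) \leq \|g\|\,\|v\| + \|u\|_1\,\|v\|_1.
\end{equation*}

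Letting $n \to \infty$, monotone convergence finally gives $\int_{\sigma(A)} f\, \d\mu_{v,v} < \infty$, which is precisely the defining condition $v \in \D(\sqrt{f(A)})$. The only genuine subtlety is the one resolved by the truncation: $f(A)v$ a priori lives in the dual of $\D(f(A))$, so its direct pairing with $v$ is not defined and the ``natural'' energy-type computation is only formal; working with $v_n$ places everything inside bounded selfadjoint functional calculus, and the uniformity of the bound in $n$ allows the monotone passage to the limit to produce the desired square-integrability of $\sqrt{f}$ against $\d\mu_{v,v}$.
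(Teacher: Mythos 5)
Your proof is correct, but it takes a genuinely different route from the paper's. The paper's argument is a two-line deduction: applying $A^{-1/2}$ to the condition $Au+f(A)v\in \HH$ and using $u\in\HH^1$ gives $A^{-1/2}f(A)v\in\HH$, i.e.\ $v\in\D(g(A))$ with $g(s)=f(s)/\sqrt{s}$; then the factorization $f(s)=\sqrt{s}\cdot g(s)$ and the H\"older inequality on $\d\mu_{v,v}$ yield $\int f\,\d\mu_{v,v}\leq\|v\|_1\|g(A)v\|<\infty$. You instead regularize by spectral truncation, pair $Au+f(A)v$ with $v_n=E_A(\sigma(A)\cap[s_0,n])v$ inside the bounded functional calculus, and pass to the limit by monotone convergence; the only cosmetic slip is that, with the paper's convention $(Au)[w]=\langle Aw,u\rangle$, your pairings should read $\langle Av_n,u\rangle$ and $\langle f(A)v_n,v\rangle$ rather than their conjugates, which is immaterial here since the quantity being bounded is real and nonnegative. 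Your approach is more robust — it is exactly the rigorous version of the formal energy identity $\Re\langle\A z,z\rangle_\H=-\|\sqrt{f(A)}v\|^2$, so it simultaneously justifies Theorem \ref{teodiss}, and it does not require spotting the factorization trick. What it loses is the sharper conclusion recorded in the remark after the lemma: the paper's route shows the stronger statement $v\in\HH^1\cap\D(g(A))\subset\D(\sqrt{f(A)})$, whereas your bound only delivers membership in $\D(\sqrt{f(A)})$ itself.
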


\begin{proof}
Exploiting the conditions $Au+f(A)v \in H$ and $u \in H^1$, we find at once the relation
$A^{-\frac{1}{2}}f(A)v \in \HH$, meaning that
$v \in \D(g(A))$ where  $g(s)=f(s)/\sqrt{s}$.
Since $v\in H^1$, by the definition of $\D(\A)$, an application of the H\"older inequality yields
\begin{align*}
\int_{\sigma(A)} f(s) \,\d \mu_{v,v} (s)
&\leq \left(\int_{\sigma(A)} s \,\d \mu_{v,v} (s) \right)^{\frac{1}{2}}
\left(\int_{\sigma(A)} \frac{[f(s)]^2}{s} \,\d \mu_{v,v} (s) \right)^{\frac{1}{2}}\\
\noalign{\vskip1mm}
&=\|v\|_1 \|g(A)v\|
<\infty.
\end{align*}
The estimate above tells that $v\in\D(\sqrt{f(A)})$, as claimed.
\end{proof}

\begin{remark}
Actually, from the proof of Lemma \ref{regdom} we infer that the variable
$v$ belongs to the (more regular) space $H^1 \cap \D(g(A))\subset \D(\sqrt{f(A)})$.
\end{remark}

We conclude the section by showing that the (densely defined) operator $\A$ is dissipative,
i.e.\ $\Re\langle \A z, z \rangle_{\H}\leq 0$ for all $z\in\D(\A)$.

\begin{theorem}
\label{teodiss}
The dissipativity relation
\begin{equation}
\label{dissip}
\Re\langle \A z, z \rangle_{\H}=-\|\sqrt{f(A)}v\|^2\leq 0
\end{equation}
holds for every $z=(u,v) \in \D(\A)$.
\end{theorem}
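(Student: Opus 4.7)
The plan is to unpack the action of $\A$ via the distributional interpretation in the Remark preceding Lemma~\ref{regdom}, to isolate a skew-Hermitian cross term from a genuinely dissipative one, and to handle the absence of individual regularity of $u$ and $v$ (only the combination $Au+f(A)v$ is guaranteed to lie in $\HH$) by means of a spectral truncation of $v$.

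Write $h:=Au+f(A)v\in\HH$, so that $\A z=(v,-h)$ and
$$
\l\A z,z\r_\H=\l v,u\r_1-\l h,v\r.
$$
The condition $h\in\HH$ coupled with the Remark translates into the duality identity
$$
\l h,w\r=\l u,Aw\r+\l v,f(A)w\r,\quad\forall\,w\in\HH^2\cap\D(f(A)).
$$
To select a good test vector, introduce the spectral cutoffs $P_n:=E_A(\sigma(A)\cap[s_0,n])$ and set $v_n:=P_nv\in\bigcap_k\HH^k$; the boundedness of $f$ on the compact set $\sigma(A)\cap[s_0,n]$ places $v_n$ also in $\D(f(A))$, so that $v_n$ is admissible. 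Self-adjointness of $A^{1/2}$ and $\sqrt{f(A)}$, together with the commutativity of $P_n$ with the functional calculus, yields
$$
\l h,v_n\r=\l u,v_n\r_1+\l\sqrt{f(A)}v,P_n\sqrt{f(A)}v\r,
$$
where the second term is well defined thanks to Lemma~\ref{regdom}, which provides $v\in\D(\sqrt{f(A)})$. Letting $n\to\infty$ and exploiting the strong convergence $P_n\to I$ on $\HH$ to pass to the limit in each of the three pairings, one obtains
$$
\l h,v\r=\l u,v\r_1+\|\sqrt{f(A)}v\|^2.
$$
Plugging back gives $\l\A z,z\r_\H=\l v,u\r_1-\l u,v\r_1-\|\sqrt{f(A)}v\|^2$, and since $\l v,u\r_1-\l u,v\r_1=2\ii\,\Im\l v,u\r_1$ is purely imaginary, taking real parts delivers the announced formula.

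The main obstacle is precisely the fact that neither $u\in\HH^2$ nor $v\in\D(f(A))$ can be assumed; the spectral truncation is designed to bypass this, but one must then check convergence in the three pairings, and the delicate one is the dissipative $f(A)$-piece, whose finiteness is exactly the content of Lemma~\ref{regdom}.
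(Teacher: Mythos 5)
Your proof is correct and follows the same route as the paper, whose proof consists of the single sentence ``direct calculations, recalling Lemma~\ref{regdom}''. You have simply carried out those direct calculations in full, with the spectral truncation supplying the rigor that the paper leaves implicit in handling the weak interpretation of $Au+f(A)v$.
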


\begin{proof}
The thesis is readily obtained by direct calculations, and recalling Lemma \ref{regdom}.
\end{proof}


\section{The Spectrum of $\A$}

\noindent
In this section, we describe the spectrum $\sigma(\A)$ of the (closed) operator~$\A$.
Besides having some interest by itself, such a description will play a crucial role in the analysis of the asymptotic properties
of \eqref{MAIN}. We first state a necessary and sufficient condition for $0\not\in \sigma(\A)$.

\begin{theorem}
\label{zerospect}
The operator $\A$ is bijective, i.e.\ $0\notin \sigma(\A)$, if and only if
\begin{equation}
\label{supsup}
\sup_{s\in \sigma(A)} \frac{f(s)}{s} <\infty.
\end{equation}
\end{theorem}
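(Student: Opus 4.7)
My plan is to reduce the bijectivity of $\A$ to a range question and answer it via the functional calculus of $A$. Since $\A$ is closed and always injective (Remark \ref{chebelremark}), the closed graph theorem tells us that $0\notin\sigma(\A)$ is equivalent to $\A$ being surjective onto $\H$. The whole analysis hinges on the behavior of the multiplier $s\mapsto f(s)/s$ on $\sigma(A)$, together with the canonical isomorphism $A:\HH^1\to\HH^{-1}$.

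For the sufficient direction, assume $\sup_{s\in\sigma(A)} f(s)/s\leq M$. The pointwise inequality $[f(s)]^2/s\leq M^2 s$ integrated against $\d\mu_{g,g}$ gives
$$
\int_{\sigma(A)}\frac{[f(s)]^2}{s}\,\d\mu_{g,g}(s)\leq M^2\|g\|_1^2\qquad \forall\, g\in\HH^1,
$$
which is exactly the statement that $A^{-1}f(A)g\in\HH^1$. Given $(g,h)\in\H$, the pair $v:=g\in\HH^1$ and $u:=-A^{-1}h-A^{-1}f(A)g\in\HH^1$ thus lies in $\D(\A)$ and satisfies $\A(u,v)=(g,h)$. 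For the converse, if $(g,h)=\A(u,v)$ with $(u,v)\in\D(\A)$, then $u\in\HH^1$ together with $Au+f(A)v\in\HH$ forces $A^{-1}f(A)v\in\HH^1$, i.e.\ finiteness of the same integral with $v=g$. Assuming $\sup f(s)/s=\infty$, it therefore suffices to exhibit one $g\in\HH^1$ for which this integral diverges, since then $(g,0)\notin\RR(\A)$.

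Producing such a $g$ is the main obstacle, because it must be carried out without any compactness assumption on $A^{-1}$ and without knowing whether the points where $f(s)/s$ is large are eigenvalues or lie in the continuous spectrum. My strategy is to pick $s_n\in\sigma(A)$ with $f(s_n)/s_n\geq n^2$, use the continuity of $f$ to isolate pairwise disjoint relatively open neighborhoods $I_n\ni s_n$ on which $f(s)/s$ and $s$ stay comparable to $f(s_n)/s_n$ and $s_n$, and then exploit the fact that $\sigma(A)$ is the support of $E_A$ to select mutually orthogonal unit vectors $e_n\in\RR(E_A(I_n))$. Setting $g=\sum_n c_n e_n$ with $|c_n|^2=1/(n^2 s_n)$, a direct comparison on each $I_n$ then yields $\|g\|_1^2\leq 2\sum_n 1/n^2<\infty$ while $\int_{\sigma(A)}[f(s)]^2/s\,\d\mu_{g,g}$ is bounded below by a positive multiple of $\sum_n n^2=+\infty$, which concludes the proof.
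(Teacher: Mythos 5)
Your argument is correct and follows essentially the same route as the paper: reduce $\A z=\hat z$ to $v=\hat u$, $u=-A^{-1}\hat v-f(A)A^{-1}\hat u$, so that bijectivity is equivalent to the multiplier $s\mapsto f(s)/s$ being bounded on $\sigma(A)$. The only difference is that where the paper simply invokes the standard functional-calculus fact that $\phi(A)$ is bounded if and only if $\phi$ is bounded on $\sigma(A)$ (stated in its Section 3), you reprove the unbounded case by hand via disjoint spectral neighborhoods and an explicit divergent vector $g$, which is a correct (if redundant) elaboration.
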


\begin{proof}
The (injective) operator $\A$ is bijective if and only if for any given $\hat z = (\hat u, \hat v)\in \H$ the equation
\begin{equation}
\label{zerores}
\A z = \hat z
\end{equation}
admits a (unique) solution $z=(u,v)\in \D(\A)$. Componentwise, this translates
into
$$
\begin{cases}
v=\hat u,\\
A u + f(A)v = - \hat v.
\end{cases}
$$
Substituting the first equation into the second one, we get
$$
u = -A^{-1}\hat v - f(A)A^{-1} \hat u.
$$
Since $A^{-1}\hat v \in \HH^1$, we have that $u\in\HH^1$ for every given $\hat u\in \HH^1$ if and only if
$f(A)A^{-1}$ is a bounded operator. It amounts to saying that condition \eqref{supsup} holds true. In such a case, the couple
$$(-A^{-1}\hat v - f(A)A^{-1} \hat u,\hat u)\in\D(\A)$$ is the unique solution
to \eqref{zerores}.
\end{proof}

\begin{remark}
Observe that \eqref{supsup} is automatically satisfied when $A$ is a bounded operator,
for $\sigma(A)$ is compact and $f$ is continuous. Hence, in this situation, it is always true that $0$ belongs to the resolvent set
$\rho(\A)$ of $\A$.
\end{remark}

We now provide a characterization of $\sigma(\A)\setminus\{ 0\}$. To this end,
for every fixed $s\in\sigma(A)$, we introduce the pair of complex numbers
$$
\xi_s^{\pm}=
\begin{cases}
\ds-\frac{f(s)}{2} \pm \frac{\sqrt{[f(s)]^2-4s}}{2} &\quad\text{if } f(s)\geq 2\sqrt{s}\,,\\
\noalign{\vskip1mm}
\ds -\frac{f(s)}{2} \pm \ii\frac{\sqrt{4s-[f(s)]^2}}{2} &\quad\text{if } f(s)< 2\sqrt{s}\,,
\end{cases}
$$
which are nothing but the solutions of the second order equation
$$
\xi^2+ f(s)\xi +s=0.
$$
We also consider the (possibly empty) subset of $\R^+$
\begin{equation}
\label{lambdaset}
\Lambda = \Big\{\ell>0:\, \exists \, s_n\in\sigma(A)
\,\,\,\text{such that}\,\,\, s_n\to\infty \,\,\,\text{and}\,\,\, \lim_{n\to\infty} \frac{f(s_n)}{s_n}=\ell\Big\}.
\end{equation}
The result reads as follows.

\begin{theorem}
\label{spec}
We have the equality
$$
\sigma(\A)\setminus\{ 0\} = \bigcup_{s\in\sigma(A)} \big\{\xi_s^{\pm} \big\}
\cup \bigcup_{\ell\, \in \Lambda} \Big\{-\frac{1}{\ell}\Big\}.
$$
\end{theorem}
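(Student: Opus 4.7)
The plan is to reduce the spectral problem for $\A$ to the boundedness of two explicit scalar multipliers via the functional calculus of $A$, and then to read off where this boundedness fails from the asymptotic behaviour of $f(s)/s$ as $s\to\infty$.

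\smallskip
\emph{Step 1: The resolvent equation.} Fix $\lambda\in\CC\setminus\{0\}$ and $\hat z=(\hat u,\hat v)\in\H$. The system $(\lambda-\A)(u,v)=(\hat u,\hat v)$ on $\D(\A)$ is equivalent, after eliminating $v=\lambda u-\hat u$, to the single equation
\begin{equation*}
\phi_\lambda(A)\,u = \hat v + \lambda\hat u + f(A)\hat u,\qquad \phi_\lambda(s):=s+\lambda f(s)+\lambda^2.
\end{equation*}
Viewed as a quadratic in $\lambda$, the roots of $\phi_\lambda(s)=0$ are exactly $\xi_s^\pm$.

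\smallskip
\emph{Step 2: The inclusion $\supseteq$.} For each $s\in\sigma(A)$ and $\lambda\in\{\xi_s^+,\xi_s^-\}$, I would build an approximate eigenvector by spectral projections: for small $\eps>0$, take any nonzero $u_\eps$ in the range of $E_A((s-\eps,s+\eps))$, normalized so that $\|u_\eps\|_1=1$, and set $v_\eps=\lambda u_\eps$. Then $(u_\eps,v_\eps)\in\D(\A)$, $\|(u_\eps,v_\eps)\|_\H\geq 1$, and
\begin{equation*}
(\A-\lambda)(u_\eps,v_\eps) = \big(0,\,-\phi_\lambda(A)u_\eps\big),
\end{equation*}
so $\|\phi_\lambda(A)u_\eps\|\to 0$ by continuity of $\phi_\lambda$ at $s$ and $\phi_\lambda(s)=0$. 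Hence $\xi_s^\pm\in\sigma(\A)$. For $\ell\in\Lambda$, Vieta's formulas $\xi_s^+\xi_s^-=s$ and $\xi_s^++\xi_s^-=-f(s)$ yield $\xi_{s_n}^+\to -1/\ell$ along any sequence $s_n\to\infty$ with $f(s_n)/s_n\to\ell$, so $-1/\ell$ lies in the closure of $\bigcup_{s}\{\xi_s^\pm\}$ and therefore in $\sigma(\A)$ by closedness of the spectrum.

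\smallskip
\emph{Step 3: The inclusion $\subseteq$.} If $\lambda\neq 0$ does not lie in the right-hand side, I would build the resolvent explicitly. Step 1 gives
\begin{equation*}
u = \frac{1}{\phi_\lambda(A)}\hat v + \frac{\lambda+f(A)}{\phi_\lambda(A)}\hat u,\qquad v=\lambda u - \hat u,
\end{equation*}
and the requirements $u,v\in\HH^1$ (together with $Au+f(A)v=\hat v-\lambda v\in\HH$, which automatically places $(u,v)$ in $\D(\A)$) reduce, through the spectral representation, to the boundedness on $\sigma(A)$ of the two continuous functions
\begin{equation*}
p_\lambda(s)=\frac{s^{1/2}}{\phi_\lambda(s)},\qquad q_\lambda(s)=\frac{\lambda+f(s)}{\phi_\lambda(s)}.
\end{equation*}
Local boundedness on bounded portions of $\sigma(A)$ is automatic from continuity and the nonvanishing of $\phi_\lambda$. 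The content is the behaviour as $s\to\infty$: given $s_n\to\infty$ in $\sigma(A)$, pass to a subsequence so that $f(s_n)/s_n\to\ell\in[0,\infty]$. The cases $\ell=0$ and $\ell=\infty$ give $|\phi_\lambda(s_n)|\sim s_n$ and $|\phi_\lambda(s_n)|\sim|\lambda|f(s_n)$ respectively, which comfortably controls both $p_\lambda(s_n)$ and $q_\lambda(s_n)$. The case $\ell\in(0,\infty)$ forces $\ell\in\Lambda$, so $\lambda\neq -1/\ell$, i.e.\ $1+\lambda\ell\neq 0$, and $|\phi_\lambda(s_n)|\sim|1+\lambda\ell|\,s_n$, again keeping $p_\lambda,q_\lambda$ bounded. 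Consequently $\lambda-\A$ admits a bounded inverse and $\lambda\in\rho(\A)$.

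\smallskip
The main technical hurdle is precisely the asymptotic case analysis in Step 3: the denominator $\phi_\lambda$ depends on the free function $f$ through $\lambda f(s)$, and the definition of $\Lambda$ is tailored exactly to rule out the residual direction at infinity where $\phi_\lambda(s_n)$ could decay more slowly than either $s_n^{1/2}$ or $f(s_n)$, producing the extra spectral points $-1/\ell$.
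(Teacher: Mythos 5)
Your argument is correct and follows essentially the same route as the paper: both reduce the resolvent equation via the functional calculus to the boundedness on $\sigma(A)$ of scalar multipliers with denominator $\xi^2+f(s)\xi+s$, and both identify exactly where this fails through sequences $s_n\in\sigma(A)$ with $f(s_n)/s_n\to\ell$. The only cosmetic differences are that the paper eliminates $u$ rather than $v$ (so it needs the single multiplier $s/(\xi^2+f(s)\xi+s)$ acting on $\HH^1$ instead of your pair $p_\lambda,q_\lambda$) and obtains the inclusion $\supseteq$ directly from the unboundedness of that multiplier, whereas you make it explicit with approximate eigenvectors.
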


\begin{proof}
Let $\xi \in \CC \setminus \{0\}$ and $\hat z = (\hat u, \hat v)\in\H$ be arbitrarily given. We look for a unique solution
$z= (u,v)\in\D(\A)$
to the resolvent equation
\begin{equation}
\label{res}
\xi z - \A z = \hat z.
\end{equation}
Written in components, we obtain the system
$$
\begin{cases}
\xi u - v=\hat u,\\
\xi v + A u + f(A)v =  \hat v.
\end{cases}
$$
Substituting the first equation into the second one, we find the expression
$$
\xi^2 v + A v + \xi f(A) v = A \hat w,
$$
having set
$$
\hat w = \xi A^{-1} \hat v - \hat u \in \HH^1.
$$
An exploitation of the functional
calculus now yields
$$
v = \int_{\sigma(A)} \frac{s}{\xi^2 + f(s)\xi+ s}\, \d E_A(s) \hat w.
$$
Thus $v\in\HH^1$ for any given $\hat w \in \HH^1$ if and only if
\begin{equation}
\label{failure}
\sup_{s\in\sigma(A)}\bigg| \frac{s}{\xi^2 + f(s)\xi+ s}\bigg| <\infty.
\end{equation}
This occurs if and only if
$$
\xi \notin  \bigcup_{s\in\sigma(A)} \big\{\xi_s^{\pm} \big\}\and
-\frac{1}{\xi} \notin \Lambda.$$
Indeed,
\eqref{failure} fails to hold if and only if there is a sequence $s_n\in\sigma(A)$ for which
\begin{equation}
\label{failure1}
\frac{\xi^2 + f(s_n)\xi+ s_n}{s_n}\to 0.
\end{equation}
If $s_n\not\to\infty$, then (up to a subsequence) $s_n$ converges to an element $s\in\sigma(A)$,
as the spectrum is a closed set. Hence  \eqref{failure1} becomes
simply
$$\xi^2 + f(s)\xi+ s=0,$$
meaning that $\xi=\xi_s^{+}$ or $\xi=\xi_s^{-}$.
If $s_n\to\infty$, then \eqref{failure1} implies that
$$\frac{f(s_n)}{s_n}\to-\frac1\xi,$$
that is, $-\frac1\xi\in\Lambda$.
Once we find $v$, we readily get
$$
u=\frac{v+\hat u}{\xi}\in \HH^1 \and Au + f(A)v = \hat v - \xi v \in \HH,
$$
meaning that
$(u,v)\in\D(\A)$
is the unique solution to \eqref{res}.
The theorem is proved.
\end{proof}

The next corollary will be needed in the sequel.

\begin{corollary}
\label{speccoro}
We have the equality
$$
(\sigma(\A)\setminus\{ 0\})\cap \ii \R= \bigcup_{s\in \ZZ} \pm \ii \sqrt{s},
$$
where $\ZZ$ is the zero-set of $f$ defined in \eqref{ZERO}.
\end{corollary}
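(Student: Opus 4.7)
The plan is to simply read off the result from Theorem \ref{spec} by a two-step case analysis, throwing away everything that is not purely imaginary.

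First I would dispose of the contribution coming from the set $\Lambda$. By definition, every $\ell\in\Lambda$ satisfies $\ell>0$, hence $-1/\ell$ is a strictly negative real number. Therefore
$$
\bigcup_{\ell\in\Lambda}\Big\{-\tfrac{1}{\ell}\Big\}\cap \ii\R=\emptyset,
$$
and this family plays no role in the intersection with the imaginary axis.

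Next I would examine the points $\xi_s^{\pm}$ for $s\in\sigma(A)$. If $f(s)\geq 2\sqrt{s}$, then the explicit formula shows that $\xi_s^{\pm}\in\R$, and both are strictly negative (if $\xi_s^{+}=0$ one would get $f(s)^2-4s=f(s)^2$, forcing $s=0$, against $\sigma(A)\subset\R^+$); hence such $\xi_s^{\pm}$ do not lie in $\ii\R\setminus\{0\}$. If instead $f(s)<2\sqrt{s}$, then
$$
\xi_s^{\pm}=-\frac{f(s)}{2}\pm \ii\,\frac{\sqrt{4s-[f(s)]^{2}}}{2},
$$
whose real part $-f(s)/2$ vanishes precisely when $f(s)=0$, i.e.\ $s\in\ZZ$. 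In that case $\xi_s^{\pm}=\pm\ii\sqrt{s}$, and these are automatically nonzero since $s>0$.

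Combining the two cases with Theorem \ref{spec} and the observation on $\Lambda$ above,
$$
(\sigma(\A)\setminus\{0\})\cap \ii\R=\bigcup_{s\in\ZZ}\{\pm \ii\sqrt{s}\},
$$
which is the claimed identity. There is essentially no obstacle: the whole argument amounts to inspecting when the closed-form expressions for $\xi_s^{\pm}$ have zero real part, so the corollary is immediate once Theorem \ref{spec} is in hand.
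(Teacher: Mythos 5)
Your argument is correct and is essentially the paper's own proof: both discard the $\Lambda$-points as negative reals and then observe that $\xi_s^{\pm}\in\ii\R$ precisely when $f(s)=0$, in which case $\xi_s^{\pm}=\pm\ii\sqrt{s}\neq 0$. Your write-up merely spells out the case $f(s)\geq 2\sqrt{s}$ in more detail.
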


\begin{proof}
Exploiting Theorem \ref{spec}, we learn at once that
$$
(\sigma(\A)\setminus\{ 0\})\cap \ii \R= \bigcup_{s\in\sigma(A)} \big\{\xi_s^{\pm} \big\} \cap \ii \R.
$$
Since $\xi_s^{\pm}\in \ii \R$ if and only if $s \in \ZZ$, and in such a case
$\xi_s^{\pm} = \pm \ii \sqrt{s}$, we are finished.
\end{proof}

\begin{remark}
\label{remeig}
By means of straightforward computations it is immediate to check that, if $s\in\sigma(A)$ is an eigenvalue of $A$, then $\xi_s^{\pm}$
are eigenvalues of $\A$.
\end{remark}

\section{The Contraction Semigroup}

\noindent
Introducing the state vector $z(t)=(u(t),v(t))$, we rewrite equation \eqref{MAIN} as the ODE in the phase space $\H$
\begin{equation}
\label{MAINabs}
\dot{z}(t) = \A z(t).
\end{equation}
The following holds.

\begin{theorem}
\label{EU}
The operator $\A$ is the infinitesimal generator of a contraction semigroup
$$S(t)=\e^{t\A}:\H \to \H.$$
\end{theorem}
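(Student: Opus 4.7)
The plan is to apply the Lumer--Phillips theorem, which requires checking three conditions on $\A$: dense domain, dissipativity, and the range condition $\RR(\xi I - \A) = \H$ for some (equivalently, every) $\xi > 0$. Two of these are essentially already in hand from the preceding material, so the proof reduces to assembling the pieces.

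First I would recall that $\D(\A)$ is dense in $\H$, since it contains $\HH^2 \times \D(q(A))$ with $q(s) = \sqrt{s + [f(s)]^2}$, and this product space is dense in $\H = \HH^1 \times \HH$ by standard functional calculus arguments. Second I would quote Theorem \ref{teodiss}, which gives the dissipativity identity
$$
\Re\langle \A z, z \rangle_{\H} = -\|\sqrt{f(A)} v\|^2 \leq 0, \qquad z = (u,v) \in \D(\A).
$$

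Third, and this is the only substantive point, I would verify that every $\xi > 0$ belongs to the resolvent set $\rho(\A)$. This is immediate from Theorem \ref{spec}: the spectrum of $\A$ away from $0$ consists of the numbers $\xi_s^{\pm}$, which by their very definition satisfy $\Re \xi_s^{\pm} = -f(s)/2 \leq 0$, together with the numbers $-1/\ell$ with $\ell \in \Lambda \subset \R^+$, which are strictly negative. Hence $\sigma(\A)$ is contained in the closed left half-plane, and in particular any $\xi > 0$ lies in $\rho(\A)$. Consequently $\xi I - \A$ is bijective from $\D(\A)$ onto $\H$, giving the range condition.

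I do not expect any genuine obstacle: Lumer--Phillips now yields that $\A$ generates a $C_0$-semigroup of contractions $S(t) = \e^{t\A}$ on $\H$. The only point requiring a little care is that Theorem \ref{spec} was stated for $\sigma(\A) \setminus \{0\}$, so one has to note separately that $0$ is not needed for the range condition (any positive real suffices); the contractivity constant is $1$ precisely because \eqref{dissip} has the optimal constant $0$ on the right-hand side, with no lower-order perturbation to absorb.
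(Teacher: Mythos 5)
Your proposal is correct and follows essentially the same route as the paper: Lumer--Phillips, with density of $\D(\A)$, dissipativity from Theorem~\ref{teodiss}, and a range condition obtained from the functional calculus. The only cosmetic difference is that the paper verifies surjectivity of $1-\A$ by a direct computation with the bounded multiplier $s/(1+s+f(s))$ rather than citing Theorem~\ref{spec}; since Theorem~\ref{spec} is established earlier by exactly that resolvent computation (independently of the semigroup), your citation introduces no circularity and the two arguments coincide in substance.
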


As a consequence, for every given initial
datum $z_0 = (u_0,v_0)\in\H$ there exists a unique {\it mild solution} $z$
in the sense of Pazy \cite{Pazy} to equation \eqref{MAINabs}, explicitly given by the formula
$$
z(t)= S(t)z_0.
$$
The associated energy reads
$$
\mathcal{E}(t) = \frac12\|S(t)z_0 \|_\H^2.
$$
Moreover, if $z_0\in\D(\A)$, then $z(t)\in\D(\A)$ for all $t\geq0$, and the mild solution is actually a classical one.

\begin{proof}[Proof of Theorem \ref{EU}]
In light of the Lumer-Phillips theorem (see e.g.\ \cite{Pazy}),
the (densely defined) operator $\A$ generates a contraction semigroup on $\H$ if and only if
it is dissipative and $1-\A$ is onto. The first fact is assured by Theorem \ref{teodiss}.
In order to show the second instance, for an arbitrarily
given $\hat z = (\hat u, \hat v)\in \H$ we look for a solution $z=(u,v)\in \D(\A)$ to the equation
$$
z-\A z = \hat z.
$$
Written in components, the latter reads
$$
\begin{cases}
u - v=\hat u,\\
v + A u + f(A)v = \hat v.
\end{cases}
$$
Plugging the first equality into the second one, we find
$$
v + A v + f(A)v = A \hat w,
$$
where $\hat w = A^{-1} \hat v -\hat u \in \HH^1$.
Then, owing to the functional calculus, we get
$$
v=\int_{\sigma(A)} \frac{s}{1+s+f(s)} \,\d E_A (s) \hat w.
$$
Being the function $f$ nonnegative, we have
$$\sup_{s\in\sigma(A)} \frac{s}{1+s+f(s)}\leq 1,$$
meaning that $v\in \HH^1$. In turn, $u=v+\hat u\in\HH^1$ as well. Finally, by comparison,
$$
A u + f(A)v = \hat v - v \in \HH.
$$
This completes the argument.
\end{proof}

\section{The Conservative Case}
\label{interlude}

\noindent
For the sake of completeness, we preliminarily dwell on the conservative case, which is very well known in the literature.
Indeed, when the function $f$ vanishes
on the spectrum of $A$, the same as saying that $\ZZ=\sigma(A)$, equation \eqref{MAIN} reduces to
\begin{equation}
\label{casecons}
\ddot u + A u=0.
\end{equation}
Despite its simple form, it serves as a model for several physical phenomena.
With reference to the notation of Section \ref{seccont}, the simplest
example is the classical wave equation with Dirichlet boundary conditions
in the unknown $u=u(\boldsymbol{x},t):\Omega\times \R^+ \to \R$
$$
\begin{cases}
\partial_{tt}u- \Delta u =0,\\
u(\boldsymbol{x},t)_{|\boldsymbol{x}\in\partial\Omega}=0,
\end{cases}
$$
corresponding to the choice $H=V$ and $A=L$ in \eqref{casecons}.
Another model matching the abstract form \eqref{casecons} is the linear Klein-Gordon equation
arising in Relativity Theory
$$
\partial_{tt}u- \Delta u +m^2 u =0,
$$
in the unknown $u=u(\boldsymbol{x},t):\R^3\times \R^+ \to \R$, where $m>0$ (see e.g.\ \cite{GREN}).
It is readily seen that, choosing $H=L^2(\R^3)$ and
$$
A = -\Delta +  m^2 \qquad\text{with}\qquad
\D(A)=H^2(\R^3),
$$
the Klein-Gordon equation takes the form \eqref{casecons}. In this situation, the strictly positive selfadjoint operator $A$
does not have compact inverse.

The next result follows immediately from Theorems \ref{zerospect} and \ref{spec}, observing
that the set $\Lambda$ defined in \eqref{lambdaset} is always empty whenever $f\equiv0$.

\begin{theorem}
Assume that $f(s)=0$ for all $s\in\sigma(A)$. Then, the operator $\A$ is always bijective, and
its spectrum fulfils the equality
$$
\sigma(\A) = \bigcup_{s\in \sigma(A)} \pm \ii \sqrt{s}.
$$
In particular, $\sigma(\A)$ is entirely contained in the imaginary axis $\ii\R$.
\end{theorem}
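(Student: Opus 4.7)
The plan is to read off both claims directly from Theorems \ref{zerospect} and \ref{spec}, since under the hypothesis $f\equiv 0$ on $\sigma(A)$ essentially every quantity appearing in those statements collapses.

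For bijectivity, I would invoke Theorem \ref{zerospect}: condition \eqref{supsup} reduces to $\sup_{s\in\sigma(A)} 0/s = 0 < \infty$, so $0 \notin \sigma(\A)$ and $\A$ is bijective.

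For the spectral description, I would apply Theorem \ref{spec} to the set $\sigma(\A)\setminus\{0\}$. Two subcomputations are needed. First, for each $s\in\sigma(A)$, since $f(s) = 0 < 2\sqrt{s}$ (recall $s>0$), the definition of $\xi_s^{\pm}$ puts us in the second branch, giving $\xi_s^{\pm}= \pm \ii\sqrt{s}$. Second, the set $\Lambda$ defined in \eqref{lambdaset} is empty: for any sequence $s_n\in\sigma(A)$ with $s_n\to\infty$, we have $f(s_n)/s_n = 0 \to 0$, and $0$ is excluded from $\Lambda$ by definition. Plugging these into Theorem \ref{spec} gives
$$
\sigma(\A)\setminus\{0\} \;=\; \bigcup_{s\in\sigma(A)} \{\pm \ii\sqrt{s}\}.
$$
Combined with $0\notin\sigma(\A)$ from the previous step, this yields the claimed equality for $\sigma(\A)$, and since $s>0$ for every $s\in\sigma(A)$, the spectrum lies on $\ii\R$.

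There is no real obstacle here, as the work was already done in the earlier theorems; the only point worth verifying carefully is that the definition of $\Lambda$ requires $\ell > 0$, which is what rules out the possibility that the sequence producing the limit $f(s_n)/s_n = 0$ forces an extra spectral point at infinity. Once that observation is made, the proof reduces to a direct citation of Theorems \ref{zerospect} and \ref{spec}.
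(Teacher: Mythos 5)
Your proposal is correct and follows exactly the route the paper takes: the paper likewise states that the result ``follows immediately from Theorems \ref{zerospect} and \ref{spec}, observing that the set $\Lambda$ defined in \eqref{lambdaset} is always empty whenever $f\equiv0$.'' Your additional explicit checks (that \eqref{supsup} holds trivially, that $\xi_s^{\pm}=\pm\ii\sqrt{s}$ because $f(s)=0<2\sqrt{s}$, and that the limit $0$ is excluded from $\Lambda$ by the requirement $\ell>0$) are precisely the observations the paper leaves implicit.
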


\section{Stability}
\label{stabilitySection}

\noindent
In this section, we analyze the stability of $S(t)$. Recall that
$S(t)$ is said to be {\it stable} if, for every fixed $z_0\in \H$,
$$
\lim_{t\to\infty}\|S(t)z_0\|_{\H}=0.
$$
It turns out that this property depends dramatically on the structure of the set $\ZZ$ defined in \eqref{ZERO}.

\begin{theorem}
\label{stability}
The following hold:
\begin{itemize}
\item[{\rm (i)}] If $E_A(\ZZ)\neq 0$ (i.e.\ it is not the null projection), then there exist solutions with constant
positive energy.
In particular, the semigroup $S(t)$ is not stable.
\smallskip
\item[{\rm (ii)}] If $E_A(\ZZ)=0$ and the set $\ZZ$ is at most countable, then the semigroup $S(t)$ is stable.
In particular, this is always the case if $\ZZ=\emptyset$.
\end{itemize}
\end{theorem}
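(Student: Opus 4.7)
The plan is to treat the two items separately. Part (i) comes from an explicit construction of a conservative trajectory living in the spectral subspace associated with $\ZZ$, while part (ii) I would derive from the Arendt-Batty-Lyubich-Vu strong stability theorem, applied to the contraction semigroup $S(t)$ on the reflexive Hilbert space $\H$, using Corollary~\ref{speccoro} to control $\sigma(\A)\cap\ii\R$.

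For (i), since $E_A(\ZZ)\neq 0$, the closed subspace $K=E_A(\ZZ)\HH$ is nontrivial. Decomposing $\ZZ=\bigcup_n \ZZ\cap[n,n+1)$, at least one projector $E_A(\ZZ\cap[n_0,n_0+1))$ is nonzero, and any nonzero $u_0$ in its range has compactly supported spectral measure, so $u_0\in \HH^r$ for every $r\geq 0$. Setting $z_0=(u_0,0)\in\D(\A)$, I would verify that
\begin{equation*}
S(t)z_0=\bigl(\cos(t\sqrt{A})u_0,\,-\sqrt{A}\sin(t\sqrt{A})u_0\bigr),
\end{equation*}
which stays in $K\times K$ because spectral projections commute with $\cos(t\sqrt{A})$ and $\sin(t\sqrt{A})$. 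Since $f\equiv 0$ on $\ZZ$, the operator $f(A)$ annihilates every element of $K$, so the dissipative term $f(A)\dot u$ is absent along the curve, which is therefore a classical solution of $\ddot u+Au=0$ and coincides with $S(t)z_0$ by uniqueness. A direct functional-calculus computation then yields
\begin{equation*}
\|S(t)z_0\|_\H^2=\int_{\sigma(A)}s\,\d\mu_{u_0,u_0}(s)=\|u_0\|_1^2>0,
\end{equation*}
a strictly positive constant, violating stability.

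For (ii), the Arendt-Batty-Lyubich-Vu criterion reduces strong stability to showing (a) $\sigma(\A)\cap \ii\R$ is at most countable and (b) $\A$ has no eigenvalues on $\ii\R$. Point (a) is immediate from Corollary~\ref{speccoro}: the set $(\sigma(\A)\setminus\{0\})\cap\ii\R=\bigcup_{s\in\ZZ}\{\pm\ii\sqrt{s}\}$ inherits countability from $\ZZ$, and adjoining $\{0\}$ preserves it. For (b), suppose $\A z=\ii\lambda z$ with $\lambda\in\R$ and $z=(u,v)\in\D(\A)\setminus\{0\}$. Taking real parts in $\langle\A z,z\rangle_\H=\ii\lambda\|z\|_\H^2$ and invoking \eqref{dissip} forces $\|\sqrt{f(A)}v\|^2=0$; by the functional calculus, $\mu_{v,v}$ is then supported on $\ZZ$, so $v=E_A(\ZZ)v=0$ by the assumption of (ii). The eigenvalue equation collapses to $\A(u,0)=(0,-Au)=(\ii\lambda u,0)$, hence $Au=0$, and the strict positivity of $A$ yields $u=0$, contradicting $z\neq 0$.

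The main delicacy I expect is precisely step (b): proving that $\A$ has no pure imaginary eigenvalues without any compactness of $A^{-1}$ at hand. The argument above works because the dissipativity identity \eqref{dissip}, together with the spectral hypothesis $E_A(\ZZ)=0$, dispatches the component $v$ instantly through the functional calculus, and this is precisely the ingredient that allows the proof to bypass the Sz.-Nagy-Foias and Jacobs-Glicksberg-deLeeuw compactness hypotheses pointed out in the introduction.
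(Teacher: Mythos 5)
Your proposal is correct and follows essentially the same route as the paper: an explicit conservative trajectory supported in the spectral subspace of $\ZZ$ for part (i), and the Arendt--Batty--Lyubich--V\~u criterion combined with the dissipativity identity \eqref{dissip} and Corollary~\ref{speccoro} for part (ii). The only (immaterial) difference is in part (i), where the paper takes the initial datum $(0,A^{-\frac12}w)$ with $w\in E_A(\ZZ)\HH$ a unit vector, which lies in $\D(\A)$ without any need for your localization of the spectral support to a bounded window $\ZZ\cap[n_0,n_0+1)$.
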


In order to show the theorem, we make use of the
famous {\it Arendt-Batty-Lyubich-V\~u stability criterion}~\cite{AB,LV}. Recall that,
denoting by $\sigma_{{\rm p}}(\A)$ the point spectrum of the infinitesimal generator $\A$,
the criterion reads as follows.

\begin{theorem}[Arendt-Batty-Lyubich-V\~u]
\label{ABC}
Assume that $\sigma_{{\rm p}}(\A)\cap \ii\R =\emptyset$ and $\sigma(\A) \cap \ii\R$ is at most countable. Then $S(t)$ stable.
\end{theorem}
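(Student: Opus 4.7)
The plan is to follow the classical Hilbert-space route to the Arendt--Batty--Lyubich--V\~u theorem, which reduces the stability assertion to a spectral statement about a unitary group obtained from $S(t)$, and then closes with self-adjointness plus a Cantor--Bendixson argument.

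First I would introduce, for each $z\in\H$, the quantity
$$
\rho(z)=\lim_{t\to\infty}\|S(t)z\|_{\H},
$$
which is well defined because $t\mapsto\|S(t)z\|_{\H}$ is non-increasing by contractivity. A short calculation shows that $\rho$ is a seminorm on $\H$ with $\rho(S(t)z)=\rho(z)$ for every $t\geq 0$, so that its kernel
$$Y=\{z\in\H:\rho(z)=0\}$$
is a closed, $S(t)$-invariant subspace. Since stability is equivalent to $Y=\H$, I would argue by contradiction and assume $Y\subsetneq\H$.

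The most delicate step is to pass to a suitable quotient. One endows $\H/Y$ with the norm inherited from $\rho$, completes it to a Hilbert space $\hat X$, and verifies that $S(t)$ factors through a $C_0$-semigroup $\tilde S(t)$ on $\hat X$ which is isometric by construction. A duality argument, exploiting the symmetric definition of $\rho$ under passage to the adjoint contraction semigroup, gives surjectivity of each $\tilde S(t)$ and hence an extension to a $C_0$-group of unitaries $U(t)$ on $\hat X$. By Stone's theorem the generator of $U(t)$ has the form $\ii B$ with $B$ self-adjoint on $\hat X$.

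The argument then concludes on the spectral side: a functorial transfer gives $\ii\sigma(B)\subseteq\sigma(\A)\cap\ii\R$, so $\sigma(B)$ is countable, and any eigenvalue of $\ii B$ lifts through the quotient to a non-zero imaginary eigenvalue of $\A$ on $\H$, which is excluded by hypothesis. Thus $B$ is self-adjoint with countable spectrum and no eigenvalues. By Cantor--Bendixson, a non-empty closed countable subset of $\R$ must have isolated points, and isolated points of the spectrum of a self-adjoint operator are automatically eigenvalues; therefore $\sigma(B)=\emptyset$, which forces $\hat X=\{0\}$ and so $Y=\H$, the desired contradiction. The main obstacle I anticipate is not the final Cantor--Bendixson step but the quotient-completion construction and, above all, the rigorous transfer of the point-spectrum hypothesis from $\A$ on $\H$ to $B$ on $\hat X$: recovering a genuine eigenvector of $\A$ from a candidate eigenvector of $B$ requires tracking how eigenelements behave under the asymptotic identification defining $Y$.
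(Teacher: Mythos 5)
A preliminary remark on the comparison itself: the paper does not prove this statement --- it is imported from \cite{AB,LV} as a known criterion --- so your attempt must be measured against the classical proofs rather than against anything in the text. Your sketch is recognizably the Lyubich--V\~u route (limit seminorm $\rho$, quotient and completion to an isometric limit semigroup, extension to a unitary group, Stone's theorem, Cantor--Bendixson), and that skeleton is the right one. However, two load-bearing steps are not justified as you state them.

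The first gap is the surjectivity of $\tilde S(t)$. It cannot be obtained ``by duality'' or by the symmetry of the construction: the right-shift semigroup on $L^2(0,\infty)$ is a contraction semigroup for which $\rho$ coincides with the norm, so $\hat X=\H$ and $\tilde S(t)=S(t)$ is isometric but not surjective. Surjectivity is exactly where the countability hypothesis must enter: one first proves the spectral inclusion $\sigma(\tilde{\A})\subseteq\sigma(\A)$ for the generator $\tilde{\A}$ of the induced semigroup, and then invokes the dichotomy that a non-invertible isometric $C_0$-semigroup has generator whose spectrum is the entire closed left half-plane, which would force $\sigma(\A)\cap\ii\R=\ii\R$, contradicting countability. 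Without this step your unitary group $U(t)$ need not exist. The second gap is the eigenvalue lift, which you flag but leave unresolved, and which is not a bookkeeping matter: the quotient map goes \emph{from} $\H$ \emph{onto} a dense subspace of $\hat X$, so an eigenvector of $B$ cannot be pulled back to a vector of $\H$ directly. The standard resolution dualizes through the quotient map and produces an eigenvector of the adjoint generator $\A^*$ with purely imaginary eigenvalue; to return to $\A$ itself (as the hypothesis of the theorem requires) one then uses the Hilbert-space fact that if $S(t)^*\phi=\e^{\ii\mu t}\phi$ for a contraction $S(t)$, then $\|S(t)\phi-\e^{-\ii\mu t}\phi\|^2=\|S(t)\phi\|^2-\|\phi\|^2\leq 0$, so the same $\phi$ is an eigenvector of $S(t)$ and hence of $\A$. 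With these two repairs your argument closes; as written, it does not.
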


We now proceed with the proof of Theorem \ref{stability}.

\begin{proof}[Proof of Theorem \ref{stability}]
Assume first that  $E_A(\ZZ)$ is a nonnull projection. In this situation,
we can select a unit vector
$w\in E_A(\ZZ)\HH$.
In particular, the probability measure $\mu_{w,w}$ is supported on $\ZZ$. Accordingly,
$$
\int_{\sigma(A)} [f(s)]^2 \d \mu_{w,w}(s)=0,
$$
meaning that $w\in\D(f(A))$ and $f(A)w=0$.
Then, by direct calculations, the solution to~\eqref{MAINabs}
with initial datum
$$
z_0=(0, A^{-\frac12}w) \in \D(\A)
$$
is given by
$$
z(t)=(\sin (t\sqrt{A}) A^{-1} w, \cos (t\sqrt{A}) A^{-\frac12} w).
$$
Such a solution $z$ has constant energy, for
\begin{align*}
\mathcal{E}(t) &=
\frac12\int_{\sigma(A)} \frac{1}{s}\, \big[|\sin(t \sqrt{s})|^2 + |\cos(t \sqrt{s})|^2\big]  \d \mu_{w,w}(s)
=\frac12\|A^{-\frac12}w\|^2=\mathcal{E}(0).
\end{align*}
The proof of item (i) is finished.
In order to show (ii), we first prove that
$$E_A(\ZZ)=0 \quad\,\, \Rightarrow \quad\,\, \sigma_{{\rm p}}(\A)\cap \ii\R =\emptyset.$$
To this end, assume by contradiction that $\ii \lambda \in \sigma_{{\rm p}}(\A)$ for some $\lambda\in \R$.
Since $\A$ is injective (see Remark \ref{chebelremark}), we have $\lambda\neq0$.
Then, there is a nonnull vector
$z=(u,v)\in \D(\A)$ satisfying
$$
\ii \lambda z - \A z=0.
$$
Componentwise, the equality above reads
\begin{equation}
\label{sss1}
\begin{cases}
\ii \lambda u - v=0,\\
\ii \lambda v + A u + f(A) v=0.
\end{cases}
\end{equation}
Invoking~\eqref{dissip}, we obtain
$$
0 = \Re \l \ii \lambda z - \A z , z\r_\H= \|\sqrt{f(A)} v\|^2=
\int_{\sigma(A)} f(s)\, \d \mu_{v,v}(s),
$$
implying that
$$v \in E_A(\ZZ)H=0.$$
Making use of the first equation of \eqref{sss1} and the fact that $\lambda\neq0$, we find  $u=0$, reaching the desired contradiction $z=0$.
At this point, Corollary \ref{speccoro} together with the assumption
that $\ZZ$ is either finite or countable ensure that $\sigma(\A) \cap \ii\R$ is either
finite or countable. The abstract Theorem \ref{ABC} then allows to conclude.
\end{proof}

Theorem \ref{stability} tells in particular that $S(t)$ is stable whenever $\ZZ=\emptyset$, but in general
the converse is not true. Nevertheless, when the operator $A^{-1}$ is compact,
the sufficient condition $\ZZ=\emptyset$ turns out to be necessary as well.

\begin{corollary}
Assume that $A^{-1}$ is a compact operator. If $S(t)$ is stable then $\ZZ=\emptyset$.
\end{corollary}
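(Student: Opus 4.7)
The plan is to argue by contrapositive: assume $\mathfrak{Z}\neq\emptyset$ (with $A^{-1}$ compact) and exhibit a solution with constant positive energy, which by Theorem~\ref{stability}(i) prevents stability. The bridge between the pointwise condition $\mathfrak{Z}\neq\emptyset$ and the projection-level condition $E_A(\mathfrak{Z})\neq 0$ is provided by the compactness of $A^{-1}$.

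First I would recall that if $A^{-1}$ is compact then $A$ has compact resolvent, so $\sigma(A)$ consists entirely of isolated eigenvalues of finite multiplicity (possibly accumulating only at $+\infty$). In particular, every point of $\sigma(A)$ is an eigenvalue of $A$, and for each $s\in\sigma(A)$ the spectral projection $E_A(\{s\})$ coincides with the orthogonal projection onto $\ker(A-s)$, hence is nonzero.

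Next, pick any $s_\star\in\mathfrak{Z}$. Since $\{s_\star\}\subseteq\mathfrak{Z}$, monotonicity of the spectral measure on Borel sets yields
$$
E_A(\mathfrak{Z})\geq E_A(\{s_\star\})\neq 0,
$$
so $E_A(\mathfrak{Z})$ is a nonnull projection. Theorem~\ref{stability}(i) then produces initial data in $\D(\A)$ generating a solution with constant positive energy, contradicting stability of $S(t)$. (Equivalently, one can invoke Remark~\ref{remeig}: since $s_\star$ is an eigenvalue of $A$ with $f(s_\star)=0$, the purely imaginary numbers $\xi_{s_\star}^\pm=\pm\ii\sqrt{s_\star}$ belong to $\sigma_{\rm p}(\A)\cap\ii\R$, and any eigenvector of $\A$ generates a periodic, hence non-decaying, orbit.)

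There is no real obstacle here; the only delicate point is making precise why $\mathfrak{Z}\neq\emptyset$ already forces $E_A(\mathfrak{Z})\neq 0$, which is exactly where compactness of $A^{-1}$ is used — without it, $\mathfrak{Z}$ could consist of continuous-spectrum points carrying zero spectral measure, and the conclusion would fail, as already noted after Theorem~\ref{stability}.
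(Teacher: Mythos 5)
Your proof is correct and follows essentially the same route as the paper: compactness of $A^{-1}$ forces every point of $\sigma(A)$ to be an eigenvalue with nonzero spectral projection, hence $\ZZ\neq\emptyset$ implies $E_A(\ZZ)\neq 0$, and Theorem~\ref{stability}(i) then rules out stability. The parenthetical alternative via Remark~\ref{remeig} is a valid but unnecessary embellishment.
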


\begin{proof}
When $A^{-1}$ is compact, it is well known that the spectrum $\sigma(A)$ is made of a sequence of eigenvalues tending to infinity.
In particular,
$$
E_A(\{ s\}) \neq 0,\quad \forall s \in \sigma(A).
$$
This clearly yields $E_A (\ZZ) \not=0$ whenever $\ZZ\not=\emptyset$. Due to Theorem \ref{stability}, the semigroup $S(t)$ is not stable.
\end{proof}

\begin{remark}
\label{remopen}
In the case where $A^{-1}$ is not compact,
the question whether or not $S(t)$ is stable if $E_A(\ZZ)=0$ and the set
$\ZZ$ is uncountable remains open.
\end{remark}

\section{Exponential Stability}
\label{ExpSec}

\noindent
A much stronger notion of stability is the exponential (or uniform) one.
Recall that $S(t)$ is said to be {\it exponentially stable} if there exist
$\varkappa>0$ and $M\geq 1$ such that
$$
\|S(t)z_0\|_{\H}\leq M\e^{-\varkappa t}\|z_0\|_\H,
$$
for all $z_0\in\H$. Exponential stability is equivalent to the fact that
the operator norm $\|S(t)\|_{L(\H)}$ goes to zero as $t\to\infty$. In turn, this is the same as saying that
$\omega_*<0$, where $\omega_*$ is the
{\it growth bound} of $S(t)$, defined as
$$\omega_*=\inf\big\{\omega\in\R:\,\|S(t)\|_{L(\H)}\leq M\e^{\omega t}\big\}
$$
for some $M=M(\omega)$. From the Hille-Yosida Theorem and the boundedness of $S(t)$, it follows at once the relation
\begin{equation}
\label{sigom}
\sigma_*\leq \omega_*\leq0,
\end{equation}
where
$$\sigma_*=\sup\big\{\Re\xi:\,\xi\in\sigma(\A)\big\}
$$
is the {\it spectral bound} of $\A$ (see e.g.\ \cite{Pazy} for more details).

\begin{theorem}
\label{stabexpteo}
The semigroup $S(t)$ is exponentially stable if and only if
\begin{equation}
\label{condexpstab}
\inf_{s\in\sigma(A)} f(s)>0 \and \sup_{s\in \sigma(A)} \frac{f(s)}{s} <\infty.
\end{equation}
\end{theorem}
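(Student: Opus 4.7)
The plan is to prove necessity by inspecting the spectrum of $\A$ and sufficiency by a Lyapunov-type energy estimate, which is the elementary, non-spectral route advertised in the introduction.

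For necessity, I would argue by contraposition using \eqref{sigom}: if $S(t)$ is exponentially stable then $\sigma_* < 0$. If $\sup_{s \in \sigma(A)} f(s)/s = \infty$, Theorem \ref{zerospect} places $0 \in \sigma(\A)$, contradicting $\sigma_* < 0$. If instead $\inf_{s \in \sigma(A)} f(s) = 0$, I would pick $s_n \in \sigma(A)$ with $f(s_n) \to 0$ and split cases: either a subsequence converges to some $s^* \in \sigma(A)$ (closed) with $f(s^*) = 0$ by continuity, whence Corollary \ref{speccoro} yields $\pm \ii \sqrt{s^*} \in \sigma(\A)$, or else $s_n \to \infty$, in which case Theorem \ref{spec} gives $\xi_{s_n}^{\pm} \in \sigma(\A)$ with $\Re\, \xi_{s_n}^{\pm} = -f(s_n)/2 \to 0$. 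Either way $\sigma_* \ge 0$, a contradiction.

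For sufficiency, set $f_0 = \inf f > 0$ and $C_1 = \sup f/s < \infty$. The key preliminary observation is that the subordination $f(s) \le C_1 s$ yields, via the functional calculus, the embedding $\HH^1 \subset \D(\sqrt{f(A)})$ with $\|\sqrt{f(A)} w\|^2 \le C_1 \|w\|_1^2$. I would then fix a classical solution $z = (u, v) \in \D(\A)$ and introduce the perturbed energy
\[
\Phi(t) = \mathcal{E}(t) + \epsilon \, \Psi(t), \qquad \Psi(t) = \Re\, \l u(t), v(t) \r ,
\]
with $\epsilon > 0$ small; Cauchy--Schwarz and Poincar\'e give $|\Psi| \le s_0^{-1/2} \mathcal{E}$, making $\Phi$ equivalent to $\mathcal{E}$.

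The core computation uses Theorem \ref{teodiss} for $\dot{\mathcal{E}} = -\|\sqrt{f(A)} v\|^2 \le -f_0 \|v\|^2$, and, via Lemma \ref{regdom} and the subordination embedding,
\[
\dot \Psi = \|v\|^2 - \|u\|_1^2 - \Re\, \l \sqrt{f(A)} u, \sqrt{f(A)} v \r .
\]
Young's inequality combined with $\|\sqrt{f(A)} u\|^2 \le C_1 \|u\|_1^2$ and $\|v\|^2 \le f_0^{-1} \|\sqrt{f(A)} v\|^2$ then yields, for $\epsilon$ small enough, $\dot \Phi \le -c\,\mathcal{E} \le -\kappa\,\Phi$, so Gronwall delivers exponential decay, which by density of $\D(\A)$ in $\H$ extends to all mild solutions. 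The main obstacle is precisely the cross term $\l u, f(A) v \r$ in $\dot \Psi$: it must be well defined \emph{and} dominated by the dissipation rate $\|\sqrt{f(A)} v\|^2$, and both demands are met only under the subordination $\sup f/s < \infty$---the same hypothesis that is independently forced by the necessity argument.
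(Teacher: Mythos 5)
Your proposal is correct and follows essentially the same route as the paper: necessity via the spectral description of $\A$ (Theorems \ref{zerospect} and \ref{spec}) combined with \eqref{sigom}, and sufficiency via a perturbed-energy Lyapunov estimate closed by the Gronwall lemma and density of $\D(\A)$. The only difference is cosmetic: the paper augments the multiplier $\Re\l \dot u,u\r$ with the extra term $\tfrac12\|\sqrt{f(A)}u\|^2$ so that the cross term $\Re\l f(A)\dot u,u\r$ cancels exactly in \eqref{secondaexp}, whereas you keep the bare multiplier and absorb the cross term by Young's inequality using both hypotheses of \eqref{condexpstab} --- both variants work.
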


We shall prove separately the necessity and the sufficiency parts.

\begin{proof}[Proof of Theorem \ref{stabexpteo} (Necessity)]
The strategy consists in showing that, if \eqref{condexpstab} is not satisfied,
then $\sigma_*\geq 0$. Due to \eqref{sigom}, the latter condition implies that $\omega_*=0$, i.e.\ $S(t)$ is not exponentially stable.
Indeed, when
$$\sup_{s\in \sigma(A)} \frac{f(s)}{s} =\infty,$$
we learn from Theorem \ref{zerospect} that $0\in\sigma(\A)$, namely, $\sigma_*\geq0$.
On the other hand, when
$$
\inf_{s\in\sigma(A)} f(s)=0,
$$
there exists a sequence $s_n\in \sigma(A)$ such that
$$f(s_n)\to0$$
as $n\to\infty$. Since the spectrum of $A$ is (positive and) away from zero,
it is clear that for all $n$ large we have $f(s_n)< 2\sqrt{s_n}$.
Exploiting Theorem \ref{spec},
the complex numbers
$$
\xi_{n}^{\pm}=
-\frac{f(s_n)}{2} \pm \ii\frac{\sqrt{4s_n-[f(s_n)]^2}}{2}
$$
belong to $\sigma(\A)$. Since
$\Re\xi_{n}^{\pm}\to 0$,
we conclude again that $\sigma_*\geq 0$.
\end{proof}

\begin{proof}[Proof of Theorem \ref{stabexpteo} (Sufficiency)]
Let $z_0\in\D(\A)$ be an arbitrarily fixed initial datum, and let
$$
z(t)=(u(t),\dot{u}(t))= S(t)z_0 \in \D(\A)
$$
be the corresponding solution. Since $\D(\A)$ is a dense subset of $\H$, in order to reach the
conclusion it is enough showing that the associated energy $\mathcal{E}(t)$ fulfills
$$
\mathcal{E}(t) \leq M^2 \mathcal{E}(0) \e^{-2\varkappa t}
$$
for some $\varkappa>0$ and $M\geq 1$ independent of $z_0$.
Along the proof, $c>0$ will denote a {\it generic} positive
constant depending only on the structural quantities of the problem and independent of $z_0$.
Multiplying equality \eqref{MAINabs} by $z$ in $\H$, taking the real part and exploiting Theorem~\ref{teodiss} we
find the identity
$$
\ddt \mathcal{E}=\Re\langle \A z, z \rangle_{\H}=-\|\sqrt{f(A)}\dot u\|^2.
$$
Invoking the first condition of \eqref{condexpstab}, it is apparent to see that
$$
\|\sqrt{f(A)}\dot u\|^2 = \int_{\sigma(A)} f(s) \,\d \mu_{\dot u,\dot u} (s) \geq c \|\dot u \|^2.
$$
Hence, we get the differential inequality
\begin{equation}
\label{primaexp}
\ddt \mathcal{E} + c \|\dot u \|^2 \leq0.
\end{equation}
Next, we introduce the auxiliary functional
$$\Phi(t)=  \frac12 \|\sqrt{f(A)}u(t)\|^2+\Re \l \dot u(t) , u(t) \r.$$
Due the Poincar\'e inequality \eqref{POINC} and the second condition of \eqref{condexpstab},
\begin{equation}
\label{equiv}
|\Phi| \leq  \frac12 \int_{\sigma(A)} f(s) \,\d \mu_{u, u} (s)+c\mathcal{E} \leq
c \int_{\sigma(A)} s \,\d \mu_{u, u} (s)+c\mathcal{E}
\leq c \mathcal{E}.
\end{equation}
Moreover, by means of direct calculations,
\begin{equation}
\label{secondaexp}
\ddt \Phi + \|u\|_1^2 = \|\dot u \|^2.
\end{equation}
At this point, for all $\eps>0$, we define the energy-like functional
$$
\Lambda_\eps(t)= \mathcal{E}(t) + \eps \Phi(t).
$$
An exploitation of \eqref{equiv} yields
$$
\frac12 \mathcal{E}(t) \leq \Lambda_\eps(t) \leq 2\mathcal{E}(t)
$$
for every $\eps>0$ small enough, meaning that $\Lambda_\eps$ is equivalent to $\mathcal{E}$. Thus,
collecting \eqref{primaexp} and \eqref{secondaexp}, and fixing the parameter $\eps>0$
sufficiently small, we arrive at the differential inequality
$$
\ddt \Lambda_\eps + 2\varkappa \Lambda_\eps \leq 0
$$
for some $\varkappa>0$.
Applying the Gronwall lemma, and using once more the equivalence of the functionals $\Lambda_\eps$ and $\mathcal{E}$,
the conclusion follows.
\end{proof}


\section{Semiuniform Stability}

\noindent
Finally, we consider an intermediate notion of stability,
known as semiuniform stability. By definition, $S(t)$
is {\it semiuniformly stable} if there exists a nonnegative function $\psi(t)$ vanishing at infinity such that
\begin{equation}
\label{defdecay}
\|S(t) z_0 \|_\H \leq \psi(t) \| \A z_0\|_\H,\quad \forall z_0\in \D(\A).
\end{equation}
Since $S(t)$ is a bounded semigroup (actually, a contraction), it easily follows by density that
if $S(t)$ is semiuniformly stable then it is stable as well.
Instead, if $S(t)$ is exponentially stable then, as a consequence of \eqref{sigom},
its infinitesimal generator $\A$ is invertible with bounded inverse, as $0$ belongs to the resolvent set $\rho(\A)$ of $\A$.
This clearly implies that
$S(t)$ is semiuniformly stable with exponential rate, i.e.\
$$\psi(t)=M'\e^{-\varkappa t}\quad\text{with }\quad \varkappa,M'>0.$$
Precisely, with reference to the previous Section~\ref{ExpSec}, $M'=M\|\A^{-1}\|_{L(\H)}$
(recall that $\|\cdot\|_{L(\H)}$ denotes the operator norm).

\smallskip
The following criterion is due to Batty \cite{BattyBis,Batty1}.

\begin{theorem}[Batty]
\label{battscrit}
The (bounded) semigroup $S(t)$ is semiuniformly stable if and only if $\sigma(\A)\cap \ii\R=\emptyset$.
\end{theorem}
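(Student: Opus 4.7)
The theorem is an equivalence, so I would split the argument into necessity and sufficiency. Since the statement holds for arbitrary bounded $C_0$-semigroups on a Banach space, my strategy would not need to exploit the specific structure of the generator $\A$ coming from equation \eqref{MAIN}.

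For the necessity direction, I would assume $S(t)$ is semiuniformly stable with rate $\psi(t)\to 0$ and suppose for contradiction that $\ii\lambda\in\sigma(\A)$ for some $\lambda\in\R$. Because $S(t)$ is a contraction semigroup, $\sigma(\A)\subset\{\Re\xi\leq 0\}$, so $\ii\lambda$ lies on the boundary of $\sigma(\A)$; a standard fact then yields that $\ii\lambda$ is in the approximate point spectrum of $\A$. Picking unit vectors $z_n\in\D(\A)$ with $w_n:=(\A-\ii\lambda)z_n\to 0$ in $\H$, the Duhamel-type identity
$$
S(t)z_n - \e^{\ii\lambda t}z_n = \int_0^t \e^{\ii\lambda s}S(t-s)w_n\,\d s
$$
combined with the contractivity of $S(t)$ gives $\|S(t)z_n - \e^{\ii\lambda t}z_n\|_\H\leq t\|w_n\|_\H\to 0$ for each fixed $t$, so that $\|S(t)z_n\|_\H\to 1$ as $n\to\infty$. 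On the other hand, semiuniform stability provides
$$
\|S(t)z_n\|_\H\leq \psi(t)\|\A z_n\|_\H\leq \psi(t)(|\lambda|+\|w_n\|_\H).
$$
Letting $n\to\infty$ for any fixed $t$ would produce $1\leq \psi(t)|\lambda|$, which would contradict $\psi(t)\to 0$; the case $\lambda=0$ delivers the contradiction already in the limit, since the right-hand side vanishes identically.

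For the sufficiency direction, I would assume $\sigma(\A)\cap \ii\R=\emptyset$, whence $0\in\rho(\A)$ and $\eta\mapsto R(\ii\eta,\A)$ is a continuous, bounded-operator-valued function on $\R$. The goal would be to prove $\|S(t)\A^{-1}\|_{L(\H)}\to 0$ as $t\to\infty$. My natural first move is to invoke the Laplace inversion formula
$$
S(t)z_0 = \frac{1}{2\pi \ii}\int_{c-\ii\infty}^{c+\ii\infty} \e^{\lambda t}R(\lambda,\A)z_0\,\d \lambda \qquad (z_0\in \D(\A^2),\ c>0),
$$
and then try to shift the contour onto the imaginary axis. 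Composing with $\A^{-1}$ is advantageous because the resolvent identity
$$
R(\lambda,\A)\A^{-1} = \frac{1}{\lambda}\bigl(R(\lambda,\A)+\A^{-1}\bigr)
$$
produces an extra factor $|\eta|^{-1}$ in the integrand, which one hopes to combine with a smooth Fourier cut-off to extract operator-norm decay of $S(t)\A^{-1}$ on arbitrarily long time windows via an Ingham/Tauberian argument.

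The main obstacle is precisely this contour-shifting step: the qualitative hypothesis $\sigma(\A)\cap \ii\R=\emptyset$ provides no control on the growth of $\|R(\ii\eta,\A)\|$ as $|\eta|\to\infty$, so any naive deformation of the Bromwich contour fails and a global Plancherel estimate is unavailable. Batty's argument circumvents this by working with compactly supported smooth frequency windows, using their support to sidestep the lack of a global resolvent bound, and then letting the window shrink to transfer the decay to the full range of $\A^{-1}$. I would follow that argument, relying on \cite{BattyBis,Batty1} for the delicate analytic details rather than attempting a self-contained alternative.
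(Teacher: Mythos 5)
This is a point where a direct comparison is not quite possible: the paper does not prove Theorem \ref{battscrit} at all. It is imported as a known abstract result with the citations \cite{BattyBis,Batty1}, so your proposal actually supplies strictly more than the paper does. What you do supply is correct. The necessity direction is a complete and self-contained argument: since $S(t)$ is a contraction, $\sigma(\A)\subset\{\Re\xi\leq 0\}$, so any spectral point on $\ii\R$ is a boundary point of the spectrum and hence lies in the approximate point spectrum; your Duhamel estimate $\|S(t)z_n-\e^{\ii\lambda t}z_n\|_\H\leq t\|w_n\|_\H$ then forces $\|S(t)z_n\|_\H\to 1$ for fixed $t$, while semiuniform stability bounds it by $\psi(t)(|\lambda|+\|w_n\|_\H)$, and choosing $t$ with $\psi(t)<1/|\lambda|$ (or any $t$ when $\lambda=0$) yields the contradiction. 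The resolvent identity $R(\lambda,\A)\A^{-1}=\lambda^{-1}(R(\lambda,\A)+\A^{-1})$ you use in the converse direction is also correct.

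The one caveat is that the sufficiency direction — the genuinely deep half of the theorem — is only sketched and then explicitly deferred to Batty's work. You correctly identify why the naive contour shift fails (the hypothesis $\sigma(\A)\cap\ii\R=\emptyset$ gives no growth control on $\|R(\ii\eta,\A)\|$ as $|\eta|\to\infty$), and your deferral is the honest and appropriate move: a self-contained proof requires either the Arendt--Batty--Lyubich--V\~u-type Tauberian machinery or the quantified contour construction of Batty--Duyckaerts, neither of which is reproduced in the paper either. So relative to the paper your attempt is at least as complete, and the elementary half you do prove is sound; just be aware that, as written, the statement is not fully proved by your proposal any more than it is by the paper, and both ultimately rest on the cited references for the implication from $\sigma(\A)\cap\ii\R=\emptyset$ to semiuniform stability.
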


\begin{remark}
\label{bestfn}
In particular, when $S(t)$ is semiuniformly stable then $0\notin \sigma(\A)$.
Accordingly, for every ${z_0}\in \H$ we can write
$$
\|S(t)\A^{-1}{z_0} \|_\H \leq \psi(t)\|\A\A^{-1}{z_0}\|_{\H}=\|{z_0}\|_{\H}.
$$
This clearly implies that any function $\psi$ satisfying \eqref{defdecay} is subject to the constraint
$$
\psi(t)\geq\|S(t)\A^{-1}\|_{L(\H)}.
$$
At the same time, for all $z_0 \in \D(\A)$
$$
\|S(t)z_0 \|_\H \leq \|S(t)\A^{-1}\|_{L(\H)}\|\A z_0\|_{\H},
$$
meaning that among the $\psi$ complying with \eqref{defdecay}, the choice
$\psi(t)=\|S(t)\A^{-1}\|_{L(\H)}$ is the best possible.
\end{remark}

We now state the necessary and sufficient condition for the semiuniform stability of~$S(t)$.

\begin{theorem}
\label{tsus}
The semigroup $S(t)$ is semiuniformly stable if and only if
\begin{equation}
\label{cnssemi}
\ZZ=\emptyset \and \sup_{s\in \sigma(A)} \frac{f(s)}{s} <\infty.
\end{equation}
\end{theorem}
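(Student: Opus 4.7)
The plan is to deduce Theorem \ref{tsus} directly from Batty's criterion (Theorem \ref{battscrit}) combined with the spectral description of $\A$ already established in Theorem \ref{zerospect} and Corollary \ref{speccoro}. Since Batty's theorem characterizes semiuniform stability of the bounded semigroup $S(t)$ as the condition $\sigma(\A) \cap \ii\R = \emptyset$, the task reduces to proving that this spectral condition on the imaginary axis is equivalent to the two conditions in \eqref{cnssemi}.

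For the necessity direction, I would assume $S(t)$ is semiuniformly stable and conclude via Batty that $\sigma(\A)\cap\ii\R=\emptyset$. In particular $0\notin\sigma(\A)$, so Theorem~\ref{zerospect} delivers at once the subordination condition $\sup_{s\in\sigma(A)} f(s)/s < \infty$. For the information about $\ZZ$, I would apply Corollary~\ref{speccoro}, according to which
$$
(\sigma(\A)\setminus\{0\})\cap\ii\R = \bigcup_{s\in\ZZ}\{\pm\ii\sqrt{s}\}.
$$
Since the left-hand side is empty and each $\sqrt{s}>0$, the union on the right can be empty only if $\ZZ=\emptyset$.

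For the sufficiency direction, assume both conditions in \eqref{cnssemi} hold. Theorem~\ref{zerospect} then gives $0 \in \rho(\A)$, and Corollary~\ref{speccoro} gives $(\sigma(\A)\setminus\{0\})\cap\ii\R = \emptyset$ because the indexing set $\ZZ$ is empty. Combining these two facts yields $\sigma(\A)\cap\ii\R = \emptyset$, and Batty's theorem concludes that $S(t)$ is semiuniformly stable.

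There is really no obstacle of substance here, since the heavy lifting has already been done in the spectral analysis of Section~6 and in the invocation of Batty's criterion; the theorem is essentially a bookkeeping assembly of those results. The only mild subtlety worth flagging in the write-up is checking that $S(t)$ qualifies as a \emph{bounded} semigroup for the application of Theorem~\ref{battscrit}, which is immediate from Theorem~\ref{EU} since $S(t)$ is a contraction semigroup.
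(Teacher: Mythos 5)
Your proposal is correct and follows exactly the paper's own argument: the paper likewise combines Theorem \ref{zerospect} (for the point $0$) with Corollary \ref{speccoro} (for the nonzero imaginary spectrum) to show that $\sigma(\A)\cap\ii\R=\emptyset$ is equivalent to \eqref{cnssemi}, and then invokes Batty's criterion. Your additional remark that $S(t)$ is bounded (being a contraction semigroup) is a fair point of care, though the paper leaves it implicit.
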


\begin{proof}
Collecting Theorem \ref{zerospect} and Corollary \ref{speccoro} we learn at once that $\sigma(\A)\cap \ii\R=\emptyset$
if and only if \eqref{cnssemi} holds true. Invoking Theorem \ref{battscrit}, we reach the thesis.
\end{proof}

\begin{remark}
\label{sesexp}
When the operator $A$ is bounded, condition \eqref{cnssemi} is equivalent to \eqref{condexpstab},
being the spectrum $\sigma(A)$ a compact set. Hence, in this situation, $S(t)$ is semiuniformly stable if and only
if it is exponentially stable.
\end{remark}

\section{Polynomial Decay Rates}
\label{POLYstabilitySection}

\noindent
When $S(t)$ is semiuniformly stable, it is
of great interest to describe the decay at infinity of $\psi(t)$ in \eqref{defdecay}.
In light of Remark~\ref{bestfn}, we concentrate on the particular function
$$\psi(t)=\|S(t)\A^{-1}\|_{L(\H)}.$$
It is an easy exercise to show that $\psi(t)$ is (Lipschitz) continuous.

\begin{theorem}
\label{decayrateteo}
Let $S(t)$ be semiuniformly stable. If there exists $\alpha>0$ such that
\begin{equation}
\label{furcont}
\inf_{s\in\sigma(A)} s^\alpha f(s)>0,
\end{equation}
then $\psi(t)$ decays at least polynomially
as $t^{-\frac{1}{2\alpha}}$, namely,
$$
\limsup_{t\to\infty} t^{\frac{1}{2\alpha}}\psi(t) <\infty.
$$
\end{theorem}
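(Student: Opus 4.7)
The strategy is to invoke the Borichev--Tomilov characterization of polynomial semigroup decay: since $S(t)$ is bounded and semiuniform stability combined with Theorem \ref{battscrit} yields $\ii\R\subset\rho(\A)$, one has
$$
\|S(t)\A^{-1}\|_{L(\H)} = O(t^{-1/\beta}) \quad\Longleftrightarrow\quad \|(\ii\lambda-\A)^{-1}\|_{L(\H)} = O(|\lambda|^\beta)
$$
as $|\lambda|\to\infty$. Hence it suffices to prove the resolvent estimate with $\beta=2\alpha$ for $|\lambda|$ sufficiently large, the remaining values being covered by continuity of the resolvent on $\ii\R$.

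Given $\hat z=(\hat u,\hat v)\in\H$, I would solve the resolvent equation $(\ii\lambda-\A)z=\hat z$ by mimicking the computation in the proof of Theorem \ref{spec}: elimination of $v$ reduces it, via the functional calculus, to
$$
u = m(A,\lambda)\bigl[\hat v + \ii\lambda\hat u + f(A)\hat u\bigr], \qquad v = \ii\lambda u - \hat u,
$$
with symbol $m(s,\lambda)=(s-\lambda^2+\ii\lambda f(s))^{-1}$. The analysis then hinges on a pointwise estimate of $m$ in three frequency regimes. For $s\leq\lambda^2/2$ or $s\geq 2\lambda^2$ the real part $s-\lambda^2$ dominates the denominator, giving $|m|\lesssim\lambda^{-2}$ or $|m|\lesssim s^{-1}$ respectively. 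In the \emph{resonant window} $\lambda^2/2\leq s\leq 2\lambda^2$ the real part can vanish, so I would use $|m|\leq(\lambda f(s))^{-1}$ and invoke hypothesis \eqref{furcont}, which on this window gives $f(s)\geq c s^{-\alpha}\geq c'\lambda^{-2\alpha}$, whence $|m|\lesssim\lambda^{2\alpha-1}$. The complementary subordination bound $f(s)\leq C s$, guaranteed by Theorem \ref{tsus}, is what controls the $f(A)\hat u$ piece.

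Decomposing $u$ and $v$ into the three summands corresponding to $\hat v$, $\ii\lambda\hat u$ and $f(A)\hat u$, each piece becomes a spectral integral that I would bound by splitting the integration region into the three regimes above. The recurrent elementary device is to rewrite $\d\mu_{\hat u,\hat u}$ as $s^{-1}\cdot s\,\d\mu_{\hat u,\hat u}$ and exploit that $s\,\d\mu_{\hat u,\hat u}$ has total mass $\|\hat u\|_1^2$; in particular $\mu_{\hat u,\hat u}(\{s\geq\lambda^2/2\})\leq 2\lambda^{-2}\|\hat u\|_1^2$, which absorbs the extra powers of $\lambda$ produced by the resonant-window symbol bound. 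Assembling the estimates yields $\|u\|_1,\|v\|\lesssim|\lambda|^{2\alpha}\|\hat z\|_\H$, which is the required resolvent bound. The main obstacle, and the place where the exponent $2\alpha$ is pinned down, is precisely the resonant window $s\sim\lambda^2$: this is where \eqref{furcont} is used essentially, and any looseness in the symbol bound there would directly worsen the final polynomial rate.
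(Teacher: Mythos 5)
Your proposal is correct, and it reaches the same intermediate target as the paper --- the resolvent bound $\|(\ii\lambda-\A)^{-1}\|_{L(\H)}={\rm O}(|\lambda|^{2\alpha})$ fed into the Borichev--Tomilov theorem --- but by a genuinely different route. The paper (Lemma \ref{aboveres}) never solves the resolvent equation explicitly: it derives a priori energy estimates, namely the dissipation identity $\|\sqrt{f(A)}v\|^2\leq\|z\|_\H\|\hat z\|_\H$, a multiplication of the second component equation by $u$, and a H\"older interpolation of $\|v\|^2$ against the weight $1/f(s)$ (respectively $1/s$), which forces a case split $\alpha\leq 1$ versus $\alpha>1$ and a final absorption via Young's inequality. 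You instead diagonalize via the functional calculus and bound the scalar symbol $m(s,\lambda)=(s-\lambda^2+\ii\lambda f(s))^{-1}$ pointwise in three regimes; the key bound $|m|\lesssim(|\lambda|f(s))^{-1}\lesssim|\lambda|^{2\alpha-1}$ on the resonant window $s\sim\lambda^2$ is exactly where \eqref{furcont} enters, and I have checked that assembling the pieces (including the Chebyshev-type weighting $\d\mu_{\hat u,\hat u}=s^{-1}\cdot s\,\d\mu_{\hat u,\hat u}$, which is genuinely needed to avoid losing a factor $|\lambda|$ in the $v$-component) does give $\|z\|_\H\lesssim|\lambda|^{2\alpha}\|\hat z\|_\H$ uniformly in $\alpha$, with no case distinction. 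Your approach makes the origin of the exponent $2\alpha$ more transparent and is shorter, at the price of relying essentially on the exact commutative functional calculus of $A$ (so it would not survive a perturbation destroying $[A,f(A)]=0$, whereas the paper's energy estimates are of a type that typically does). Two small points to nail down in a full write-up: for $\hat u\in H^1$ the vector $f(A)\hat u$ need not lie in $H$, so the three summands must be kept as the composite bounded multipliers $m(s,\lambda)$, $\ii\lambda\, m(s,\lambda)$ and $m(s,\lambda)f(s)$ (the last one bounded by $\min(C,|\lambda|^{-1})$ thanks to $f(s)\leq Cs$ from Theorem \ref{tsus}); and you should note that solvability of the resolvent equation itself, i.e.\ $\ii\R\subset\rho(\A)$, is already supplied by Theorems \ref{battscrit} and \ref{tsus}, so your computation only needs to produce the quantitative estimate.
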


\begin{theorem}
\label{decayrateopt}
Let $S(t)$ be semiuniformly stable. If $A$ is unbounded and there exists $\beta>0$ such that
\begin{equation}
\label{furcont2}
\sup_{s\in\sigma(A)} s^\beta f(s) < \infty,
\end{equation}
then $\psi(t)$ decays at most polynomially
as $t^{-\frac{1}{2\beta}}$, namely,
$$
\limsup_{t\to\infty} t^{\frac{1}{2\beta}}\psi(t)>0.
$$
\end{theorem}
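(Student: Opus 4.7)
The approach is to construct, using the hypothesis that $A$ is unbounded and the growth control $f(s)\leq C s^{-\beta}$, a sequence of approximate eigenvectors $z_n\in\D(\A)$ for purely imaginary frequencies $\ii\lambda_n$ with $\lambda_n\to\infty$. The solutions $S(t)z_n$ will then decay so slowly that the defining inequality $\|S(t)z_n\|_\H\leq \psi(t)\|\A z_n\|_\H$ forces $\psi(t)\gtrsim t^{-1/(2\beta)}$ along a corresponding sequence of times.

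Since $A$ is unbounded, we may pick $s_n\in\sigma(A)$ with $s_n\to\infty$ and set $\lambda_n=\sqrt{s_n}$. Because $s_n$ lies in the spectrum of the selfadjoint operator $A$, the projection $E_A([s_n-\delta_n,s_n+\delta_n])$ is nonzero for every $\delta_n>0$; exploiting the continuity of $f$, we may choose $\delta_n\downarrow 0$ quickly and a unit vector $w_n$ in its range such that
\begin{equation*}
\|(A-s_n)w_n\|\to 0,\qquad \|f(A)w_n-f(s_n)w_n\|\to 0,\qquad \langle A w_n,w_n\rangle=s_n+o(1).
\end{equation*}
Since $w_n$ has bounded spectral support, $z_n=(w_n,\ii\lambda_n w_n)$ lies in $\D(\A)$, with $\|z_n\|_\H^2=\langle Aw_n,w_n\rangle+\lambda_n^2\sim 2\lambda_n^2$ and $\|\A z_n\|_\H\lesssim\lambda_n^2$. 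A direct computation using $\lambda_n^2=s_n$ gives the defect
\begin{equation*}
r_n:=\ii\lambda_n z_n-\A z_n=\big(0,\,(A-s_n)w_n+\ii\lambda_n f(A)w_n\big),
\end{equation*}
and the spectral estimates above, combined with $f(s_n)\leq C\lambda_n^{-2\beta}$, yield $\|r_n\|_\H\lesssim\lambda_n^{1-2\beta}$. Note that $r_n\neq 0$ for large $n$, since by Theorem \ref{tsus} semiuniform stability forces $\ii\R\cap\sigma(\A)=\emptyset$, preventing $\ii\lambda_n$ from being an eigenvalue of $\A$.

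Now the difference $y_n(t)=S(t)z_n-\e^{\ii\lambda_n t}z_n$ solves $\dot y_n=\A y_n-\e^{\ii\lambda_n t}r_n$ with $y_n(0)=0$; Duhamel's formula and the contraction property $\|S(t)\|_{L(\H)}\leq 1$ give
$\|y_n(t)\|_\H\leq t\|r_n\|_\H$, whence $\|S(t)z_n\|_\H\geq \|z_n\|_\H-t\|r_n\|_\H$. Setting $t_n=\|z_n\|_\H/(2\|r_n\|_\H)\gtrsim \lambda_n^{2\beta}$, we obtain $\|S(t_n)z_n\|_\H\geq\tfrac12\|z_n\|_\H\gtrsim\lambda_n$. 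On the other hand, $z_n\in\D(\A)$ yields
\begin{equation*}
\|S(t_n)z_n\|_\H=\|S(t_n)\A^{-1}(\A z_n)\|_\H\leq \psi(t_n)\|\A z_n\|_\H\lesssim \psi(t_n)\lambda_n^2.
\end{equation*}
Combining the two bounds and using $\lambda_n\lesssim t_n^{1/(2\beta)}$ gives $\psi(t_n)\gtrsim\lambda_n^{-1}\gtrsim t_n^{-1/(2\beta)}$, which is precisely the claim $\limsup_{t\to\infty} t^{1/(2\beta)}\psi(t)>0$.

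The main delicate point is the calibration of the spectral windows $\delta_n$: we need $\|(A-s_n)w_n\|$ to be negligible against $\|z_n\|_\H$, and, more subtly, $\|f(A)w_n-f(s_n)w_n\|$ to be negligible against $\lambda_n f(s_n)\sim \lambda_n^{1-2\beta}$. Both requirements hinge exclusively on the continuity of $f$ on the closed set $\sigma(A)$, not on any quantitative modulus of continuity; after the localization is set up the remainder is routine Duhamel bookkeeping.
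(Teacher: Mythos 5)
Your proof is correct, and it reaches the conclusion by a genuinely different route from the paper. Both arguments start from the same object: a sequence of spectrally localized unit vectors $w_n$ near $s_n\in\sigma(A)$, $s_n\to\infty$, with $\|(A-s_n)w_n\|$ and $\|f(A)w_n-f(s_n)w_n\|$ made small by shrinking the spectral window (the paper imports this from \cite[Lemma~9.2]{DDP}; your calibration of $\delta_n$ via the continuity of $f$ on $\sigma(A)$ is exactly what is needed, and your requirement $\lambda_n\|f(A)w_n-f(s_n)w_n\|\lesssim\lambda_n^{1-2\beta}$ is achievable since for each fixed $n$ the oscillation of $f$ over $\sigma(A)\cap[s_n-\delta_n,s_n+\delta_n]$ can be made as small as desired). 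The divergence is in how the approximate eigenvectors are exploited. The paper works in the frequency domain: it feeds $\hat z_n=(0,w_n)$ into the resolvent equation at $\ii\sqrt{s_n}$, extracts the coefficient $\zeta_n\approx 1/f(s_n)\geq c\,s_n^{\beta}$ of $w_n$ in the solution to get the lower bound $\|(\ii\sqrt{s_n}-\A)^{-1}\|_{L(\H)}\geq c\,s_n^{\beta}$ (Lemma \ref{belowres}), and then converts this into the decay statement by contradiction via the Batty--Duyckaerts theorem (Theorem \ref{BDUY}). You instead work directly in the time domain: the defect $r_n=\ii\lambda_n z_n-\A z_n$ has $\|r_n\|_\H\lesssim\lambda_n^{1-2\beta}$, Duhamel plus the contraction property give $\|S(t)z_n\|_\H\geq\|z_n\|_\H-t\|r_n\|_\H$, and evaluating at $t_n=\|z_n\|_\H/(2\|r_n\|_\H)\gtrsim\lambda_n^{2\beta}$ (finite and nonzero since $\ii\lambda_n\notin\sigma(\A)$) yields $\psi(t_n)\gtrsim\lambda_n^{-1}\gtrsim t_n^{-1/(2\beta)}$; note that only the lower bound $t_n\gtrsim\lambda_n^{2\beta}$ is needed for the last implication, so the argument is insensitive to $f(s_n)$ possibly being much smaller than $s_n^{-\beta}$. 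What each approach buys: the paper's resolvent lower bound is of independent interest (it is the exact counterpart of the upper bound in Lemma \ref{aboveres}) and fits the standard Tauberian framework, at the cost of invoking the nontrivial Theorem \ref{BDUY}; your argument is more elementary and self-contained, avoids the resolvent and the Batty--Duyckaerts machinery entirely, and produces an explicit sequence of times along which the polynomial rate is saturated.
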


\begin{remark}
Actually, when $A$ is a bounded operator, the first result above is of little use, since
in that case (cf.\ Remark~\ref{sesexp}) $S(t)$ is semiuniformly stable if and only
if it is exponentially stable.
\end{remark}

\begin{remark}
Note that when $A$ is unbounded, then \eqref{furcont2} is incompatible with
the first condition in \eqref{condexpstab}. Besides, it is apparent that
$\alpha\geq \beta$ when both \eqref{furcont} and
\eqref{furcont2} are satisfied (again, under the assumption $A$ unbounded).
\end{remark}

Theorems \ref{decayrateteo} and \ref{decayrateopt} produce an immediate corollary.

\begin{corollary}
Let $S(t)$ be semiuniformly stable. If $A$ is unbounded and \eqref{furcont}-\eqref{furcont2}
simultaneously hold with $\alpha=\beta$,
then $\psi(t)$ decays polynomially
as $t^{-\frac{1}{2\alpha}}$,
and such a decay rate is optimal.
\end{corollary}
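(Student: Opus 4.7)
The statement is a direct combination of Theorems \ref{decayrateteo} and \ref{decayrateopt}, so the plan is essentially to quote both and observe what the equality $\alpha=\beta$ buys us. First, since $S(t)$ is semiuniformly stable and the lower bound \eqref{furcont} on $f$ holds with exponent $\alpha$, Theorem \ref{decayrateteo} applies and yields
\[
\limsup_{t\to\infty} t^{\frac{1}{2\alpha}}\psi(t) <\infty,
\]
that is, $\psi(t)\leq C\,t^{-\frac{1}{2\alpha}}$ for all $t$ sufficiently large and some $C>0$. This is the polynomial decay half of the statement.

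Second, since $A$ is unbounded and the upper bound \eqref{furcont2} on $f$ holds with exponent $\beta=\alpha$, Theorem \ref{decayrateopt} applies with this value of $\beta$ and yields
\[
\limsup_{t\to\infty} t^{\frac{1}{2\alpha}}\psi(t)>0,
\]
i.e., there is a sequence $t_n\to\infty$ along which $\psi(t_n)\geq c\, t_n^{-\frac{1}{2\alpha}}$ for some $c>0$. This shows that the exponent $\frac{1}{2\alpha}$ in the upper bound cannot be improved: any $\psi$ of the form $M t^{-\gamma}$ with $\gamma>\frac{1}{2\alpha}$ would force the second limsup to vanish, contradicting Theorem \ref{decayrateopt}. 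Hence the rate $t^{-\frac{1}{2\alpha}}$ is optimal in the sense of decay exponents, and the corollary follows.

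Since both theorems are already proved earlier in the paper, there is no substantial obstacle here; the only point requiring a line of care is spelling out what \emph{optimality} means, namely that the best polynomial exponent $\gamma$ for which $\psi(t)=O(t^{-\gamma})$ is exactly $\gamma=\frac{1}{2\alpha}$, as witnessed by the matching upper limsup (finite) and lower limsup (positive) at this very exponent.
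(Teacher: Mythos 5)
Your proposal is correct and is exactly the paper's argument: the paper explicitly presents this corollary as an immediate consequence of Theorems \ref{decayrateteo} and \ref{decayrateopt}, obtained by applying the first with exponent $\alpha$ and the second with $\beta=\alpha$. Your additional remark spelling out what optimality means (the positive limsup rules out any faster polynomial rate) is a harmless and accurate elaboration.
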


The proofs of Theorems \ref{decayrateteo} and \ref{decayrateopt} will be given in the next two sections.

\section{Proof of Theorem \ref{decayrateteo}}
\label{primero}

\noindent
The strategy consists in finding a polynomial estimate from above on the growth rate of the resolvent operator of $\A$
on the imaginary axis $\ii\R$, as the latter can be linked with the decay rate of $\|S(t)\A^{-1}\|_{L(\H)}$ making use of
the following abstract result due to Borichev and Tomilov~\cite{BT}.

\begin{theorem}[Borichev-Tomilov]
\label{BTpol}
Assume that $\sigma(\A)\cap \ii\R=\emptyset$, and let $\nu >0$ be fixed.
Then, we have
$$
\|(\ii\lambda - \A)^{-1}\|_{L(\H)} = {\rm O}(|\lambda|^\nu)\quad\, \text{as }\, |\lambda|\to \infty
$$
if and only if
$$
\|S(t)\A^{-1}\|_{L(\H)} = {\rm O}(t^{-1/\nu})\quad\, \text{as } t\to \infty.
$$
\end{theorem}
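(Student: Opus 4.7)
The Borichev–Tomilov theorem is an equivalence of two quantitative estimates on the bounded semigroup $S(t)$, assumed under the hypothesis $\sigma(\A)\cap\ii\R=\emptyset$, which guarantees $\A$ is invertible with bounded inverse. I would prove the two directions separately; the easy one via the Laplace transform and an algebraic identity, the hard one via a contour deformation that fundamentally exploits the Hilbert-space structure of $\H$.

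For \emph{decay $\Rightarrow$ resolvent bound} (the easier direction), the plan is to start from the Laplace transform identity
$$
(\mu - \A)^{-1}\A^{-1} = \int_0^\infty e^{-\mu t} S(t) \A^{-1}\,\d t, \qquad \Re\mu>0,
$$
and observe that the decay hypothesis $\|S(t)\A^{-1}\|_{L(\H)} = {\rm O}(t^{-1/\nu})$ allows one to pass to the limit $\mu\to\ii\lambda$ (absolutely convergent for $\nu<1$; in general via integration by parts or by smoothing with an extra power of $\A^{-1}$ and then iterating the algebraic identity below) to obtain $\|(\ii\lambda - \A)^{-1}\A^{-1}\|_{L(\H)} = {\rm O}(|\lambda|^{\nu-1})$. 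Combining this with the algebraic resolvent identity
$$
(\ii\lambda - \A)^{-1} = -\A^{-1} + \ii\lambda\, (\ii\lambda - \A)^{-1}\A^{-1}
$$
then delivers the target bound $\|(\ii\lambda - \A)^{-1}\|_{L(\H)} = {\rm O}(|\lambda|^\nu)$ at once.

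For the harder direction, \emph{resolvent bound $\Rightarrow$ decay}, the plan is a contour shift. First, by Neumann series, the pointwise bound $\|R(\ii\lambda,\A)\|_{L(\H)} \leq C(1+|\lambda|)^\nu$ extends to a $\lambda$-dependent strip $\{-\delta(\lambda)\leq \Re\zeta \leq 0\}$ of width $\delta(\lambda)=c(1+|\lambda|)^{-\nu}$, and on this strip the resolvent norm remains $O((1+|\lambda|)^\nu)$. Next, for $x\in\D(\A^2)$, start from the inverse Laplace representation
$$
S(t)\A^{-1} x = \frac{1}{2\pi\ii}\int_{a-\ii\infty}^{a+\ii\infty} e^{\zeta t}\, R(\zeta,\A)\A^{-1} x\,\d\zeta,\qquad a>0,
$$
and deform the contour onto the left boundary $\{\Re\zeta=-\delta(\lambda)\}$ of the extended strip. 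The exponential factor then contributes $e^{-\delta(\lambda) t}$, and balancing this decay against the polynomial growth $|\lambda|^\nu$ of the resolvent, together with the $|\zeta|^{-2}$ decay provided by the two extra copies of $\A^{-1}$, produces the rate $t^{-1/\nu}$ on $\D(\A^2)$; density of $\D(\A^2)$ in the range of $\A^{-1}$ extends the bound to all of $\H$.

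The main obstacle, and the place where the Hilbert-space structure enters crucially, is extracting the sharp exponent $1/\nu$ without a logarithmic loss. The key tool is the Plancherel theorem: for $x,y\in\H$ and $a>0$ the function $\lambda\mapsto\langle(a+\ii\lambda-\A)^{-1}x,y\rangle$ is (up to a constant factor) the Fourier transform of $e^{-at}\langle S(t)x,y\rangle$, so its $L^2(\R)$ norm is controlled by $\|x\|\,\|y\|$. Combining this $L^2$ control on a line close to the imaginary axis with the pointwise polynomial bound via Cauchy–Schwarz on the shifted contour integral removes the logarithmic factor that would otherwise appear in a purely Banach-space argument, and is precisely what distinguishes the Borichev–Tomilov sharp polynomial decay from the earlier Batty–Duyckaerts logarithmic-type result.
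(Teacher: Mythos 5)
This statement is not proved in the paper at all: it is quoted verbatim as an external result of Borichev and Tomilov \cite{BT}, used as a black box in Section \ref{primero}, so there is no internal proof to compare your sketch against. Judged on its own terms, your outline correctly identifies the architecture of the known proofs: the asymmetry between the two implications, the Neumann-series extension of the resolvent bound to a region of width $\sim(1+|\lambda|)^{-\nu}$ around $\ii\R$, an inverse-Laplace/contour representation of $S(t)\A^{-1}$, and, decisively, the Plancherel theorem as the Hilbert-space ingredient that upgrades the Banach-space rate $(\log t/t)^{1/\nu}$ of Batty--Duyckaerts to the sharp $t^{-1/\nu}$. But the two places where the content of the theorem actually lives are asserted rather than argued, and one of them, as written, does not deliver the stated conclusion.

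First, the ``easy'' direction. Letting $\mu\to\ii\lambda$ in the Laplace transform under absolute convergence only yields $\|(\ii\lambda-\A)^{-1}\A^{-1}\|_{L(\H)}=\mathrm{O}(1)$, hence via your algebraic identity $\|(\ii\lambda-\A)^{-1}\|_{L(\H)}=\mathrm{O}(|\lambda|)$, which is the claimed bound only for $\nu\geq1$; and your proposed repair by smoothing with extra powers of $\A^{-1}$ (which gives $\|S(t)\A^{-k}\|_{L(\H)}\leq\|S(t/k)\A^{-1}\|_{L(\H)}^{k}=\mathrm{O}(t^{-k/\nu})$) and iterating the identity produces only $\mathrm{O}(|\lambda|^{\lceil\nu\rceil})$, not the sharp $\mathrm{O}(|\lambda|^{\nu})$ that the ``only if'' requires. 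The standard mechanism is different: one uses the finite-time identity
$$
(\ii\lambda-\A)^{-1}x=\int_0^{t_0}\e^{-\ii\lambda t}S(t)x\,\d t+\e^{-\ii\lambda t_0}S(t_0)(\ii\lambda-\A)^{-1}x,
$$
bounds the integral crudely by $Mt_0\|x\|$, writes $S(t_0)(\ii\lambda-\A)^{-1}x=S(t_0)\A^{-1}\big(\ii\lambda(\ii\lambda-\A)^{-1}x-x\big)$, and chooses $t_0\sim|\lambda|^{\nu}$ so that $|\lambda|\,\|S(t_0)\A^{-1}\|_{L(\H)}\leq\tfrac12$ and the remainder can be absorbed; this gives exactly $\mathrm{O}(|\lambda|^{\nu})$ for every $\nu>0$. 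Second, and more seriously, essentially all of the difficulty of the hard direction is concentrated in your last paragraph: the naive contour shift you describe gives at best the logarithm-damaged rate, and the claim that ``combining the $L^2$ control with the pointwise bound via Cauchy--Schwarz removes the logarithmic factor'' names the right tool without exhibiting the mechanism (one needs a frequency decomposition at $|\lambda|\sim t^{1/\nu}$ and Plancherel applied to the orbits of both $S(t)$ and its adjoint). As a roadmap to \cite{BT} your proposal is sound; as a proof it is not yet one.
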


The needed estimate is contained in the next lemma.

\begin{lemma}
\label{aboveres}
Within the assumptions of Theorem \ref{decayrateteo}, we have ($\sigma(\A)\cap \ii\R=\emptyset$ and)
$$
\|(\ii \lambda - \A)^{-1}\|_{L(\H)}= {\rm O} (|\lambda|^{2\alpha})\quad\, \text{as } |\lambda|\to\infty.
$$
\end{lemma}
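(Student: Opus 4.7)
\medskip

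The plan is to use the explicit expression for the resolvent obtained in the proof of Theorem~\ref{spec}. Since $S(t)$ is semiuniformly stable, Theorem~\ref{battscrit} immediately yields $\sigma(\A)\cap \ii\R=\emptyset$, so that $(\ii\lambda-\A)^{-1}$ is well-defined as a bounded operator on $\H$ for every real $\lambda\neq 0$. Moreover, specializing the computation in the proof of Theorem~\ref{spec} to $\xi=\ii\lambda$, the solution $z=(u,v)\in\D(\A)$ of $\ii\lambda z-\A z=\hat z$ with $\hat z=(\hat u,\hat v)\in\H$ can be written through the functional calculus as
$$
v = \ii\lambda\int_{\sigma(A)} \frac{1}{s-\lambda^2+\ii\lambda f(s)}\,\d E_A(s)\,\hat v
- \int_{\sigma(A)} \frac{s}{s-\lambda^2+\ii\lambda f(s)}\,\d E_A(s)\,\hat u,
$$
while $u=(v+\hat u)/(\ii\lambda)$.

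The second step is to turn these representations into norm estimates via the isometric formula $\|\phi(A)w\|^2=\int|\phi|^2\d\mu_{w,w}$ supplied by the functional calculus. Writing the $\HH^1$-norm of $\hat u$ as $\int s\,\d\mu_{\hat u,\hat u}$, the required bounds reduce to controlling the four suprema
$$
\sup_{s\in\sigma(A)}\frac{|\lambda|}{|s-\lambda^2+\ii\lambda f(s)|},\qquad
\sup_{s\in\sigma(A)}\frac{s^{1/2}}{|s-\lambda^2+\ii\lambda f(s)|},
$$
(which give $\|v\|$) together with the analogous ones with numerators $s^{1/2}|\lambda|$ and $s$ (which give $\|v\|_1$, and therefore $\|u\|_1$ through $\|u\|_1\leq (\|v\|_1+\|\hat u\|_1)/|\lambda|$).

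Each of these suprema is estimated by splitting the spectrum $\sigma(A)$ into the three regions
$$
\{s\leq \tfrac{\lambda^2}{2}\},\qquad \{s\geq 2\lambda^2\},\qquad \{\tfrac{\lambda^2}{2}\leq s\leq 2\lambda^2\}.
$$
In the first two regions $|s-\lambda^2|$ already dominates $\max\{s,\lambda^2\}$ up to a constant, so the denominator is bounded below by $\tfrac{1}{2}\max\{s,\lambda^2\}$ and the contribution of the corresponding supremum is bounded uniformly in $\lambda$. The delicate region is $s\sim\lambda^2$, where the real part $s-\lambda^2$ may vanish and the only available lower bound comes from the imaginary part: hypothesis~\eqref{furcont} yields
$$
|s-\lambda^2+\ii\lambda f(s)|\geq |\lambda|f(s)\geq c\,|\lambda|\,s^{-\alpha}\geq c'\,|\lambda|^{1-2\alpha}.
$$
Inserting this into each of the four suprema (with $s\sim\lambda^2$ in the numerators) produces a factor at most of order $|\lambda|^{1+2\alpha}$ for the numerators $s^{1/2}|\lambda|$ and $s$, and at most of order $|\lambda|^{2\alpha}$ for the numerators $|\lambda|$ and $s^{1/2}$.

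Combining the estimates yields $\|v\|\lesssim|\lambda|^{2\alpha}\|\hat z\|_\H$ and $\|v\|_1\lesssim |\lambda|^{1+2\alpha}\|\hat z\|_\H$; the relation $u=(v+\hat u)/(\ii\lambda)$ then gives $\|u\|_1\lesssim |\lambda|^{2\alpha}\|\hat z\|_\H$, hence $\|z\|_\H\lesssim |\lambda|^{2\alpha}\|\hat z\|_\H$ for all $|\lambda|$ large. The main technical obstacle is precisely the careful handling of the resonant region $s\sim\lambda^2$, where one must recognize that the cancellation of the real part is offset exactly by the lower bound on $f$, and where the factor $|\lambda|$ appearing in $\hat w=\ii\lambda A^{-1}\hat v-\hat u$ must be tracked simultaneously with the growth of the spectral symbol to avoid losing a power and ending up with $|\lambda|^{1+2\alpha}$ instead of the sharp $|\lambda|^{2\alpha}$.
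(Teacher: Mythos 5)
Your proof is correct, and it follows a genuinely different route from the paper's. You diagonalize the resolvent equation via the functional calculus, write $v$ explicitly as a spectral multiplier acting on $\hat u$ and $\hat v$ separately, and estimate the symbol pointwise by splitting $\sigma(A)$ into the off-resonant regions $\{s\le\lambda^2/2\}$, $\{s\ge2\lambda^2\}$ (where $|s-\lambda^2|\gtrsim\max\{s,\lambda^2\}\ge s^{1/2}|\lambda|$ makes all four suprema $O(1)$) and the resonant band $s\sim\lambda^2$ (where \eqref{furcont} gives the lower bound $|\lambda|f(s)\gtrsim|\lambda|^{1-2\alpha}$); splitting $\hat w=\ii\lambda A^{-1}\hat v-\hat u$ into its two pieces before estimating is indeed what saves the extra power of $|\lambda|$ and yields the sharp exponent $2\alpha$. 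The paper instead runs an energy-multiplier argument directly on the resolvent system: it takes $\Re\langle(\ii\lambda-\A)z,z\rangle_\H$ to control $\|\sqrt{f(A)}v\|^2$, multiplies the second equation by $u$, and interpolates $\|v\|^2$ against $\|\sqrt{f(A)}v\|$ and $\|v\|_1$ via H\"older with the weight $1/f(s)\le cs^\alpha$, treating the cases $\alpha\le1$ and $\alpha>1$ separately and closing with Young's inequality. Your approach is more transparent about where the power $|\lambda|^{2\alpha}$ originates (the resonant band) and avoids the case split in $\alpha$, but it leans entirely on the commutative, diagonal structure of the problem; the paper's energy method is the more robust template, of the kind the authors advertise as exportable to perturbed or nonlinear variants where no explicit spectral representation of the resolvent is available. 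Both arguments deliver the same bound $\|z\|_\H\le c|\lambda|^{2\alpha}\|\hat z\|_\H$ for $|\lambda|\ge1$, after which Theorem~\ref{BTpol} finishes the proof identically.
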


\begin{proof}
In what follows, $c\geq0$ will denote a {\it generic} constant depending only on the structural quantities of the problem.
Since $S(t)$ is semiuniformly stable, we know from Theorem~\ref{battscrit} that
$\sigma(\A)\cap \ii\R=\emptyset$. Besides,
due to Theorem \ref{tsus},
condition \eqref{cnssemi} holds true.

For every fixed $\lambda \in \R$
and $\hat z = (\hat u, \hat v) \in \H$, the resolvent equation
\begin{equation}
\label{rsus}
\ii\lambda z - \A z= \hat z
\end{equation}
has a unique solution
$$z=(\ii\lambda-\A)^{-1} \hat z.$$
In order to prove the lemma, it is enough showing the estimate
\begin{equation}
\label{goalsus}
\|z\|_\H \leq c |\lambda|^{2\alpha}\|\hat z\|_\H
\end{equation}
for every $|\lambda|\geq1$. To this end, writing \eqref{rsus} componentwise, we get the system
\begin{align}
\label{sus01}
&\ii\lambda u - v=\hat u,\\
\label{sus02}
&\ii\lambda v + A u + f(A) v =  \hat v.
\end{align}
In light of the dissipativity property
\eqref{dissip}, a multiplication in $\H$ of \eqref{rsus} with $z$ entails the control
\begin{equation}
\label{sus1}
\|\sqrt{f(A)}v\|^2 = \Re\langle (\ii\lambda-\A) z, z \rangle_{\H}
= \Re\langle \hat z, z \rangle_{\H} \leq \|z\|_\H \|\hat z \|_\H.
\end{equation}
Next, multiplying in $\HH$ equation \eqref{sus02} by $u$ and exploiting \eqref{sus01}, we obtain
\begin{align*}
\|u\|_1^2 &= \l \hat v , u \r - \ii \lambda \l v , u\r - \l \sqrt{f(A)} v, \sqrt{f(A)} u \r\\
& = \l \hat v , u \r + \l v , \hat u\r + \|v\|^2- \l \sqrt{f(A)} v, \sqrt{f(A)} u \r\\
& \leq \|v\|^2 + \| \sqrt{f(A)} u \|\| \sqrt{f(A)} v\| + c\|z\|_\H \|\hat z \|_\H.
\end{align*}
Invoking the second condition in \eqref{cnssemi},
$$
\| \sqrt{f(A)} u\|
\leq c \left(\int_{\sigma(A)} s \,\d \mu_{u,u} (s)\right)^{\frac12} = c\|u\|_1 \leq c \|z\|_\H.
$$
Hence, appealing to \eqref{sus1}, we find
\begin{equation}
\label{sus12}
\|u\|_1^2 \leq \|v\|^2+c \|z\|_\H^{3/2}\|\hat z\|_\H^{1/2} + c\|z\|_\H \|\hat z \|_\H.
\end{equation}
Moreover, due to \eqref{sus01}, it is also true that
\begin{equation}
\label{sus11}
\|v\|_1 \leq |\lambda| \|z\|_\H + \|\hat z \|_\H.
\end{equation}
At this point, we shall treat separately two cases.

\smallskip
\noindent
{\sc Case ${\alpha \leq 1}$.}
An application of \eqref{furcont} and \eqref{sus1}, together with the  H\"older inequality, yields
\begin{align*}
\|v\|^2 &\leq
\left(\int_{\sigma(A)} \frac{1}{f(s)} \,\d \mu_{v,v} (s) \right)^{\frac{1}{2}}
\left(\int_{\sigma(A)} f(s) \,\d \mu_{v,v} (s) \right)^{\frac{1}{2}}\\
&\leq c \left(\int_{\sigma(A)} s^\alpha \,\d \mu_{v,v} (s) \right)^{\frac{1}{2}}\|\sqrt{f(A)}v\|\\
&\leq c \left(\int_{\sigma(A)} s \,\d \mu_{v,v} (s) \right)^{\frac{\alpha}{2}}\|v\|^{1-\alpha}\|z\|_\H^{1/2}\|\hat z\|_\H^{1/2}\\\noalign{\vskip1.5mm}
&\leq c \|v\|_1^{\alpha}\|z\|_\H^{(3 - 2\alpha)/2}\|\hat z\|_\H^{1/2}.
\end{align*}
Estimating the term $\|v\|_1^{\alpha}$ with the aid of \eqref{sus11}, we reach the control
$$
\|v\|^2 \leq c |\lambda|^\alpha \|z\|_\H^{3/2}\|\hat z\|_\H^{1/2} + c \|z\|_\H^{(3-2\alpha)/2}\|\hat z\|_\H^{(1+2\alpha)/2}.
$$
Collecting now \eqref{sus12} and the inequality above, for every $|\lambda|\geq 1$ we have
\begin{align*}
\|z\|^2_\H&\leq 2\|v\|^2+c \|z\|_\H^{3/2}\|\hat z\|_\H^{1/2} + c\|z\|_\H \|\hat z \|_\H \\
&\leq c |\lambda|^\alpha \|z\|_\H^{3/2}\|\hat z\|_\H^{1/2} + c \|z\|_\H^{(3-2\alpha)/2}\|\hat z\|_\H^{(1+2\alpha)/2}
+ c\|z\|_\H \|\hat z \|_\H,
\end{align*}
Finally, we estimate the first two terms on the right-hand side making use of the Young inequality with
conjugate exponents $(\frac{4}{3},4)$ and $(\frac{4}{3-2\alpha},\frac{4}{1+2\alpha})$, respectively. This leads to
$$
\|z\|^2_\H\leq \frac{1}{4}\|z\|^2_\H + c|\lambda|^{4\alpha}\|\hat z\|_\H^2 +c\|z\|_\H \|\hat z \|_\H
\leq \frac{1}{2}\|z\|^2_\H + c|\lambda|^{4\alpha}\|\hat z\|_\H^2
$$
for all $|\lambda|\geq 1$, which readily implies \eqref{goalsus}.
\qed

\medskip
\noindent
{\sc Case ${\alpha >1}$.} Exploiting \eqref{furcont}, \eqref{sus1}
and the  H\"older inequality, we infer that
\begin{align*}
\|v\|^2 &\leq
\left(\int_{\sigma(A)} \frac{1}{s} \,\d \mu_{v,v} (s) \right)^{\frac{1}{2}}
\left(\int_{\sigma(A)} s \,\d \mu_{v,v} (s) \right)^{\frac{1}{2}}\\
&\leq \left(\int_{\sigma(A)} \frac{1}{s^\alpha} \,\d \mu_{v,v} (s) \right)^{\frac{1}{2\alpha}}\|v\|^{(\alpha-1)/\alpha}\|v\|_1\\\noalign{\vskip1.7mm}
&\leq c \|\sqrt{f(A)}v\|^{1/\alpha}\|z\|_\H^{(\alpha-1)/\alpha} \|v\|_1
\\\noalign{\vskip3mm}
&\leq c \|v\|_1 \|z\|_\H^{(2\alpha-1)/2\alpha}\|\hat z\|_\H^{1/2\alpha}.
\end{align*}
Recalling \eqref{sus11}, we get the control
$$
\|v\|^2 \leq c |\lambda|\|z\|_\H^{(4\alpha-1)/2\alpha}\|\hat z\|_\H^{1/2\alpha}
+ c\|z\|_\H^{(2\alpha-1)/2\alpha}\|\hat z\|_\H^{(2\alpha+1)/2\alpha}.
$$
At this point, from \eqref{sus12} and the inequality above, we obtain
\begin{align*}
\|z\|^2_\H&\leq 2\|v\|^2+c \|z\|_\H^{3/2}\|\hat z\|_\H^{1/2} + c\|z\|_\H \|\hat z \|_\H \\
&\leq c |\lambda|\|z\|_\H^{(4\alpha-1)/2\alpha}\|\hat z\|_\H^{1/2\alpha}
+ c\|z\|_\H^{(2\alpha-1)/2\alpha}\|\hat z\|_\H^{(2\alpha+1)/2\alpha} +
c \|z\|_\H^{3/2}\|\hat z\|_\H^{1/2}+ c\|z\|_\H \|\hat z \|_\H.
\end{align*}
Estimating the first three terms on the right-hand side making use of the Young inequality with
conjugate exponents
$(\frac{4\alpha}{4\alpha-1},4\alpha),(\frac{4\alpha}{2\alpha-1},\frac{4\alpha}{2\alpha+1})$
and $(\frac{4}{3},4)$, respectively, we end up with
$$
\|z\|^2_\H\leq \frac{1}{4}\|z\|^2_\H + c|\lambda|^{4\alpha}\|\hat z\|_\H^2+c\|z\|_\H \|\hat z \|_\H
\leq \frac{1}{2}\|z\|^2_\H + c|\lambda|^{4\alpha}\|\hat z\|_\H^2
$$
for all $|\lambda|\geq 1$. The latter implies at once \eqref{goalsus}.
\end{proof}

\begin{proof}[Conclusion of the proof of Theorem~\ref{decayrateteo}]
The claim merely follows by collecting the above
Lemma~\ref{aboveres} and
Theorem~\ref{BTpol}.
\end{proof}

\section{Proof of Theorem \ref{decayrateopt}}
\label{segundo}

\noindent
The key tool is the next result of Batty and Duyckaerts~\cite{Batty1}
(see also \cite[Theorem 4.4.14]{BattyLibroBis}).

\begin{theorem}[Batty-Duyckaerts]
\label{BDUY}
Assume that $S(t)$ is semiuniformly stable.
Then for any (strictly) decreasing continuous function $h:[0,\infty)\to \R^+$
vanishing at infinity
and satisfying $\|S(t)\A^{-1}\|_{L(\H)} \leq h(t)$, there exists $C>0$
such that
$$
\|(\ii\lambda-\A)^{-1}\|_{L(\H)} \leq C h^{-1}\Big(\frac1{2|\lambda|}\Big)
$$
for all $|\lambda|$ sufficiently large.
\end{theorem}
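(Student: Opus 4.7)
The plan is to derive the resolvent bound directly from the hypothesis $\|S(t)\A^{-1}\|_{L(\H)}\leq h(t)$ via a \emph{smooth-cutoff} integration-by-parts identity that trades the oscillation $e^{-\ii\lambda s}$ against the prescribed temporal decay. The only way in which the hypothesis will enter the argument is through its equivalent pointwise form
$$
\|S(t)w\|_\H\leq h(t)\|\A w\|_\H,\qquad\forall\,w\in\D(\A),
$$
obtained by writing $w=\A^{-1}(\A w)$.

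Fix $\lambda\in\R$ with $|\lambda|$ large (permitted since $\sigma(\A)\cap\ii\R=\emptyset$ by Theorem~\ref{battscrit}) and $v\in\H$. Setting $w=(\ii\lambda-\A)^{-1}v\in\D(\A)$, so that $\A w=\ii\lambda w-v$, I would select a smooth cutoff $\phi(s)=\psi(s/T)$ with $\psi\in C_c^\infty([0,\infty))$ fixed, $\psi(0)=1$, $|\psi|\leq 1$, and $\psi'$ supported in $[1/2,1]$. The crucial observation is
$$
\ddt\big[e^{-\ii\lambda s}S(s)w\big]=e^{-\ii\lambda s}\big(S(s)\A w-\ii\lambda S(s)w\big)=-e^{-\ii\lambda s}S(s)v,
$$
so integration by parts against $\phi$ (whose only boundary contribution is $-\phi(0)w=-w$) produces the master identity
$$
w=-\int_0^\infty\phi'(s)\,e^{-\ii\lambda s}S(s)w\,ds+\int_0^\infty\phi(s)\,e^{-\ii\lambda s}S(s)v\,ds.
$$
This identity is the heart of the matter: the dangerous factor $\ii\lambda$ has been traded for $\phi'$, so that the decay $h$ couples only with the small derivative $\phi'$ rather than with all of $\phi$.

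Estimating in norm, the contractivity of $S(t)$ gives $\int|\phi(s)|\,\|S(s)v\|\,ds\leq T\|v\|$, while the pointwise inequality above combined with $\|\A w\|\leq|\lambda|\|w\|+\|v\|$ and the monotonicity of $h$ yields $\int|\phi'(s)|\,\|S(s)w\|\,ds\leq c\,h(T/2)(|\lambda|\|w\|+\|v\|)$ with $c$ absolute (one uses that $\phi'$ is concentrated in $[T/2,T]$). Combining,
$$
\|w\|\leq c\,h(T/2)\,(|\lambda|\|w\|+\|v\|)+T\|v\|.
$$
Choosing $T=2h^{-1}\!\big(1/(4c|\lambda|)\big)$, which is legitimate for $|\lambda|$ large since $h\downarrow 0$, makes $c\,h(T/2)|\lambda|\leq 1/4$, so $\|w\|$ absorbs on the left; after collecting constants and observing that $h^{-1}(1/(2|\lambda|))$ dominates $1/|\lambda|$, the desired bound $\|w\|_\H\leq C\,h^{-1}(1/(2|\lambda|))\|v\|_\H$ follows.

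The main obstacle is the \emph{design of the cutoff}. A sharp cutoff at time $T$ would leave an uncontrollable term of size $|\lambda|T\|w\|$ coming from $\ii\lambda\int_0^T S(s)w\,ds$, and $|\lambda|T\to\infty$ in the regime of interest. Smoothness of $\phi$ is essential to confine $|\phi'|$ to $[T/2,T]$, exactly where $h(s)$ is already small and can offset the factor $|\lambda|$. The companion subtlety is the balancing of $T$, which must be large enough that $h(T/2)|\lambda|$ is subunitary (so absorption works) yet small enough that $T\|v\|$ does not overshoot the target rate; this balance forces $T\sim h^{-1}(c/|\lambda|)$, and calibrating the absolute constants is what pins down the precise form $h^{-1}(1/(2|\lambda|))$ stated in the theorem.
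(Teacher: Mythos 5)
The paper does not prove this statement at all: it is quoted as a known result of Batty--Duyckaerts \cite{Batty1} (see also \cite[Theorem 4.4.14]{BattyLibroBis}), so there is no in-paper argument to compare with. Your strategy is the standard proof of this direction of their theorem, and the master identity, the individual estimates, and the absorption scheme are all sound. However, your stated reason for introducing the smooth cutoff is based on a miscalculation. With the sharp cutoff $\phi=\chi_{[0,T]}$ one simply integrates the differentiation identity over $[0,T]$ and obtains
$$
w=\e^{-\ii\lambda T}S(T)w+\int_0^T\e^{-\ii\lambda s}S(s)v\,\d s,
$$
whose boundary term is controlled by $h(T)\bigl(|\lambda|\,\|w\|_\H+\|v\|_\H\bigr)$ exactly as your $\phi'$-term is; no term of size $|\lambda|T\|w\|_\H$ ever appears, because the identity $\tfrac{\d}{\d s}[\e^{-\ii\lambda s}S(s)w]=-\e^{-\ii\lambda s}S(s)v$ has already converted $\A w-\ii\lambda w$ into $-v$ before any cutoff is applied.

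The one genuine gap is quantitative. Since $\psi(0)=1$ and $\psi$ is compactly supported with $|\psi|\leq1$, necessarily $c=\int_0^\infty|\psi'|\geq1$, and your choice $T=2h^{-1}\bigl(1/(4c|\lambda|)\bigr)$ delivers the bound $\|w\|_\H\leq C\,h^{-1}\bigl(1/(4c|\lambda|)\bigr)\|v\|_\H$. For a general decreasing $h$ with no doubling-type regularity, $h^{-1}\bigl(1/(4c|\lambda|)\bigr)$ is \emph{not} dominated by a constant multiple of $h^{-1}\bigl(1/(2|\lambda|)\bigr)$: for $h(t)=1/\log(2+t)$ the ratio of the two quantities is of order $\e^{(4c-2)|\lambda|}\to\infty$ whenever $c>\tfrac12$. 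So the concluding step ``after collecting constants \dots the desired bound follows'' is not a valid deduction, and as written you prove a weaker statement than the one in the theorem (weaker in a way that happens to be harmless for the application to Theorem \ref{decayrateopt}, where only $h^{-1}(c/|\lambda|)={\rm o}(|\lambda|^{2\beta})$ is needed, but the theorem asserts the precise constant $\tfrac{1}{2|\lambda|}$). The repair is immediate: use the sharp cutoff above (or a monotone $\psi$, so that $c=1$) and absorb with coefficient $\tfrac12$ rather than $\tfrac14$, i.e.\ choose $T=h^{-1}\bigl(1/(2|\lambda|)\bigr)$, which gives $\|w\|_\H\leq\bigl(\tfrac1{|\lambda|}+2T\bigr)\|v\|_\H\leq 3\,h^{-1}\bigl(1/(2|\lambda|)\bigr)\|v\|_\H$ for $|\lambda|$ large.
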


In order to apply this abstract theorem, we need to find
a (polynomial) control of the operator norm $\|(\ii\lambda-\A)^{-1}\|_{L(\H)} $. This is provided by the next lemma.

\begin{lemma}
\label{belowres}
Within the assumptions of Theorem \ref{decayrateopt},
we have ($\sigma(\A)\cap \ii\R=\emptyset$ and)
$$
\limsup_{\lambda\to\infty} \lambda^{-2\beta} \|(\ii \lambda - \A)^{-1}\|_{L(\H)}>0.
$$
\end{lemma}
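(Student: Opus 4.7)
The plan is to construct explicit \emph{approximate eigenvectors} of $\A$ associated with purely imaginary points $\ii\lambda_n$ drifting to infinity, realizing the predicted polynomial blow-up of the resolvent norm. First, $\sigma(\A)\cap\ii\R=\emptyset$ is immediate from Theorem~\ref{battscrit} combined with Theorem~\ref{tsus}. Since $A$ is unbounded, $\sigma(A)$ is unbounded, so I pick $s_n\in\sigma(A)$ with $s_n\to\infty$. Because $A$ is selfadjoint, every open neighborhood of a spectral point carries nonzero spectral projection; hence for any prescribed $\delta_n>0$ there exists a unit vector $w_n\in E_A\bigl((s_n-\delta_n,s_n+\delta_n)\bigr)\HH$.

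Setting $\lambda_n=\sqrt{s_n}$, I introduce
$$u_n=\frac{1}{\sqrt 2}\,A^{-1/2}w_n,\qquad v_n=\ii\lambda_n u_n,\qquad z_n=(u_n,v_n)\in\D(\A).$$
The functional calculus, together with the concentration of $\mu_{w_n,w_n}$ on $(s_n-\delta_n,s_n+\delta_n)$, yields $\|z_n\|_\H\to 1$: indeed $\|u_n\|_1^2=\tfrac12$ exactly, and $\|v_n\|^2=\tfrac{s_n}{2}\int s^{-1}\,\d\mu_{w_n,w_n}=\tfrac12(1+o(1))$. The first component of $(\ii\lambda_n-\A)z_n$ vanishes by choice of $v_n$, while the second component reads $(A-s_n)u_n+\ii\lambda_n f(A)u_n$. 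By spectral localization the first summand has norm $\lesssim \delta_n/\sqrt{s_n}$, whereas by hypothesis \eqref{furcont2} the second is $\lesssim \sup_{s\in(s_n-\delta_n,s_n+\delta_n)\cap\sigma(A)}f(s)\lesssim s_n^{-\beta}$.

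Choosing $\delta_n=s_n^{-\beta}$ makes the first error $O(s_n^{-\beta-1/2})$, strictly negligible with respect to the second, so $\|(\ii\lambda_n-\A)z_n\|_\H=O(s_n^{-\beta})$. Combining this with $\|z_n\|_\H\gtrsim 1$ for $n$ large, the standard lower bound for resolvent norms via approximate eigenvectors gives
$$\|(\ii\lambda_n-\A)^{-1}\|_{L(\H)}\geq \frac{\|z_n\|_\H}{\|(\ii\lambda_n-\A)z_n\|_\H}\gtrsim s_n^{\beta}=\lambda_n^{2\beta},$$
whence $\limsup_{\lambda\to\infty}\lambda^{-2\beta}\|(\ii\lambda-\A)^{-1}\|_{L(\H)}>0$, as claimed.

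The main technical point is the calibration of the window $\delta_n$, picked so that the spectral spread becomes negligible against the damping term; otherwise the construction is entirely routine bookkeeping via the functional calculus. No deeper obstruction arises, since the spectral-projection property of selfadjoint operators supplies the approximate eigenvectors for free, and hypothesis \eqref{furcont2} is a pointwise upper bound tailor-made for controlling $f(A)u_n$.
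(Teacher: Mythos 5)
Your argument is correct, but it takes a genuinely different route from the paper's. You build an explicit quasimode $z_n=(u_n,\ii\sqrt{s_n}\,u_n)$ supported in a shrinking spectral window of width $\delta_n=s_n^{-\beta}$ around $s_n$, so that the first component of $(\ii\sqrt{s_n}-\A)z_n$ vanishes identically and the second reduces to $(A-s_n)u_n+\ii\sqrt{s_n}f(A)u_n$, whose two pieces you bound by $O(s_n^{-\beta-1/2})$ and $O(s_n^{-\beta})$ respectively via spectral localization and the pointwise bound \eqref{furcont2}; the standard inequality $\|(\ii\lambda-\A)^{-1}\|\geq\|z\|_\H/\|(\ii\lambda-\A)z\|_\H$ then gives the lower bound $\gtrsim s_n^{\beta}=\lambda_n^{2\beta}$. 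The paper instead solves the resolvent equation with the specific right-hand side $\hat z_n=(0,w_n)$, where $w_n$ is an approximate joint eigenvector of $A$ and $f(A)$ (with errors calibrated against the unknown $\eta_n=\|(\ii\sqrt{s_n}-\A)^{-1}\|$ itself, via \cite[Lemma 9.2]{DDP}), decomposes the solution as $v_n=\zeta_n w_n+q_n$ with $q_n\perp w_n$, and extracts $\zeta_n\approx 1/f(s_n)\gtrsim s_n^\beta$ by pairing the equation with $w_n$. Your approach is more self-contained and arguably cleaner: it avoids the self-referential calibration of the error terms against $\eta_n$, avoids dividing by $f(s_n)$ (so the hypothesis $\ZZ=\emptyset$ is not even needed for the estimate, only for the resolvent to exist on $\ii\R$), and rests only on the elementary fact that every neighborhood of a spectral point of a selfadjoint operator carries a nonzero spectral projection. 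What the paper's formulation buys in exchange is a template that generalizes to systems where an explicit quasimode is harder to write down, which is presumably why the authors reuse the machinery of \cite{DDP}.
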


\begin{proof}
Take $s_n \in \sigma(A)$ with $s_n\to\infty$ (this is possible being the operator $A$ unbounded).
Since $\sigma(\A)\cap \ii\R=\emptyset$ due to Theorem \ref{battscrit}, we introduce the sequence
$$
\eta_n = \|(\ii \sqrt{s_n} - \A)^{-1}\|_{L(\H)}.
$$
Exploiting the functional calculus of $A$, and arguing exactly as in the proof of \cite[Lemma~9.2]{DDP}, we can
find unit vectors $w_n\in H$ such that
\begin{align*}
&r_n^1 \doteq A w_n - s_n w_n \\
&r_n^2 \doteq f(A) w_n - f(s_n) w_n
\end{align*}
satisfy the bounds
\begin{equation}
\label{restecont}
\|r_n^\imath\| \leq \frac{1}{n (1+\eta_n)}, \quad \,\, \imath=1,2.
\end{equation}
Next, we define the further unit vectors
$$
\hat z_n = (0,w_n) \in\H
$$
and we consider the resolvent equation
$$
\ii \sqrt{s_n} z_n -\A z_n= \hat z_n
$$
which admits a unique solution
$$z_n = (u_n,v_n)=(\ii \sqrt{s_n} -\A)^{-1} \hat z_n.$$ In particular,
since $\|\hat z_n\|_\H=1$, we have
\begin{equation}
\label{unit}
\|z_n\|_\H = \|(\ii \sqrt{s_n} -\A)^{-1} \hat z_n\|_{\H} \leq \eta_n.
\end{equation}
Writing the resolvent equation componentwise, we obtain the system
\begin{align*}
&\ii\sqrt{s_n} u_n - v_n=0,\\
&\ii\sqrt{s_n} v_n + A u_n + f(A) v_n =  w_n.
\end{align*}
Substituting the first equation into the second one, we find
\begin{equation}
\label{eqz}
-s_n v_n + A v_n + \ii\sqrt{s_n} f(A) v_n =  \ii\sqrt{s_n} w_n.
\end{equation}
It is readily seen that $v_n$ can be written in the form
$$
v_n = \zeta_n w_n + q_n
$$
for some $\zeta_n\in\CC$ and some vectors $q_n \perp w_n$. Due to \eqref{unit}, the controls
\begin{equation}
\label{eqzbound}
|\zeta_n| \leq \|z_n\|_\H \leq \eta_n \and \|q_n\| \leq \|z_n\|_\H \leq \eta_n
\end{equation}
hold. At this point, we multiply \eqref{eqz} by $w_n$ in $H$. Exploiting the fact that $w_n$ and $q_n$ are orthogonal, and
after straightforward calculations, we get the equality
(recall that $f(s)\neq0$ for all $s \in \sigma(A)$ due to Theorem \ref{tsus})
$$
\zeta_n  = \frac{1}{f(s_n)} +\rho_n
$$
having set
$$
\rho_n=\frac{- \zeta_n \l r_n^1 , w_n \r - \l q_n , r_n^1 \r
-\ii\sqrt{s_n} \zeta_n \l r_n^2 , w_n\r -\ii\sqrt{s_n} \l q_n , r_n^2\r}{\ii\sqrt{s_n} f(s_n)}.
$$
Note that, in light of \eqref{restecont} and \eqref{eqzbound}, for $n$ large enough
$$
|\rho_n| \leq \frac{2\eta_n(1+\sqrt{s_n})}{n(1+\eta_n)f(s_n)\sqrt{s_n}}
\leq \frac{4}{nf(s_n)}.
$$
Accordingly, for $n$ large enough
$$
|\zeta_n|  \geq \frac{1}{f(s_n)} - |\rho_n|
\geq \frac{1}{2f(s_n)}.
$$
Finally, making use of \eqref{unit}, the inequality above and \eqref{furcont2}, we arrive at
$$
\eta_n  \geq \|z_n\|_\H \geq |\zeta_n| \geq
\frac{1}{2f(s_n)}\geq c s_n^\beta
$$
for some structural constant $c>0$. Recalling the definition of $\eta_n$,
from the latter estimates we conclude that
$$
\limsup_{n\to\infty} s_n^{-2\beta} \|(\ii s_n - \A)^{-1}\|_{L(\H)}>0,
$$
and the thesis follows.
\end{proof}
\begin{proof}[Conclusion of the proof of Theorem \ref{decayrateopt}]
Assume by contradiction that
$$
\psi(t) = {\rm o} (t^{-\frac{1}{2\beta}})\quad\, \text{as } t \to \infty,
$$
and let $h:[0,\infty)\to \R^+$ be a (strictly) decreasing continuous function satisfying
$$\psi(t) \leq h(t)\and h(t)={\rm o} (t^{-\frac{1}{2\beta}}).$$
One may take for instance
$$h(t)=\ds\sup_{s\geq t}\psi(s)+\e^{-t}.$$
Invoking now Theorem \ref{BDUY}, there exists $C>0$ such that
$$
\|(\ii\lambda-\A)^{-1}\|_{L(\H)} \leq C h^{-1}\Big(\frac1{2|\lambda|}\Big)
$$
for every $|\lambda|$ large enough. Since
$$
h^{-1}\Big(\frac1{2|\lambda|}\Big) = {\rm o} (|\lambda|^{2\beta})\quad\,\text{as } |\lambda| \to \infty,
$$
we conclude that
$$
\|(\ii\lambda-\A)^{-1}\|_{L(\H)}= {\rm o} (|\lambda|^{2\beta})\quad\,\text{as } |\lambda| \to \infty,
$$
contradicting Lemma \ref{belowres}.
\end{proof}

\section{Applications}
\label{SECAPPL}

\noindent
We now apply the results obtained so far to the examples presented in Section \ref{seccont}, to which
we address the reader for the notation. In what follows, we denote by
$$0<\lambda_1<\lambda_2\leq \ldots \leq \lambda_n \to \infty$$
the sequence of eigenvalues of the Laplace-Dirichlet operator $L$.

\subsection{Abstract wave equations with fractional damping}
In view of Theorem \ref{EU}, equation~\eqref{polyabseq} generates a contraction semigroup
$$
S_\vartheta(t)=\e^{t\A{_\vartheta}} : \H \to \H.
$$
Here, $\A_\vartheta$ is the particular instance of $\A$ corresponding to the choice $f(s)=s^\vartheta$, namely
$$
\A_\vartheta
\left(\begin{matrix}
u\\
v
\end{matrix}\right)
= \left(
\begin{matrix}
v\\
- A u - A^\vartheta v
\end{matrix}
\right)
$$
with domain
$$
\D(\A_\vartheta)=
\left\{ (u,v)\in\H \left|
\begin{array}{c}
v\in \HH^1\\
Au + A^\vartheta v \in \HH
\end{array}\right.
\right\}.
$$
Note that
$\sup_{s\in\sigma(A)}f(s)/\sqrt{s}<\infty$ if and only if $\vartheta\leq \tfrac12$ or $A$ is a bounded operator. In this
situation (and only in this situation), the domain factorizes as
$$
\D(\A_\vartheta) = H^2 \times H^1.
$$
Exploiting Theorems \ref{zerospect} and \ref{spec}, we obtain a precise description of the spectrum of $\A_\vartheta$.

\begin{theorem}
\label{specabspolywa}
The following hold:
\begin{itemize}
\item[\rm (i)] The operator $\A_\vartheta$ is bijective if and only if $\vartheta\leq1$ or $A$ is a bounded operator.
\smallskip
\item[\rm (ii)] We have
$$
\sigma(\A_\vartheta)\setminus\{ 0\} =
\begin{cases}
\bigcup_{s\in\sigma(A)} \big\{\xi_s^{\pm} \big\}
\cup \{ -1\} & {\text{ if } } \vartheta= 1 \text{ and $A$ unbounded},\\
\noalign{\vskip2mm}
\bigcup_{s\in\sigma(A)} \big\{\xi_s^{\pm} \big\}
& {\text{ otherwise, } }
\end{cases}
$$
where
$$
\xi_s^{\pm}=
\begin{cases}
\ds-\frac{s^\vartheta}{2} \pm \frac{\sqrt{s^{2\vartheta}-4s}}{2} &\quad\text{if }\, s^{\vartheta-\tfrac12}\geq 2,\\
\noalign{\vskip1mm}
\ds -\frac{s^\vartheta}{2} \pm \ii\frac{\sqrt{4s-s^{2\vartheta}}}{2} &\quad\text{if }\, s^{\vartheta-\tfrac12}< 2.
\end{cases}
$$
\end{itemize}
\end{theorem}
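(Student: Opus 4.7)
The proof is essentially a direct specialization of the two main spectral results Theorem \ref{zerospect} and Theorem \ref{spec} to the particular function $f(s)=s^\vartheta$, combined with a careful case analysis in $\vartheta$ and on whether $A$ is bounded. I would not attempt any independent argument: the hard work has already been done in the abstract setting.

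\textbf{Part (i).} Apply Theorem \ref{zerospect}: $\A_\vartheta$ is bijective if and only if
$$
\sup_{s\in\sigma(A)}\frac{s^\vartheta}{s}=\sup_{s\in\sigma(A)}s^{\vartheta-1}<\infty.
$$
Since $\sigma(A)\subset[s_0,\infty)$ with $s_0>0$, this supremum is automatically finite when $\vartheta\le 1$ (bounded by $s_0^{\vartheta-1}$). When $\vartheta>1$, the map $s\mapsto s^{\vartheta-1}$ is increasing, so the supremum is finite if and only if $\sigma(A)$ is bounded, which in turn is equivalent to $A$ being bounded. Combining these two cases yields exactly the dichotomy in (i).

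\textbf{Part (ii).} Apply Theorem \ref{spec}, which gives
$$
\sigma(\A_\vartheta)\setminus\{0\}=\bigcup_{s\in\sigma(A)}\{\xi_s^\pm\}\;\cup\;\bigcup_{\ell\in\Lambda}\Bigl\{-\tfrac{1}{\ell}\Bigr\}.
$$
The pairs $\xi_s^\pm$ are obtained by substituting $f(s)=s^\vartheta$ into the general formula, which gives precisely the expressions stated in the theorem, with the threshold condition $f(s)\ge 2\sqrt{s}$ rewriting as $s^{\vartheta-1/2}\ge 2$. So the only real task is to compute the set $\Lambda$ defined in \eqref{lambdaset} for $f(s)=s^\vartheta$, namely the set of positive cluster values of $s_n^{\vartheta-1}$ along sequences $s_n\in\sigma(A)$ with $s_n\to\infty$. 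I would split into cases: if $A$ is bounded, no such sequence exists and $\Lambda=\emptyset$; if $A$ is unbounded and $\vartheta<1$, then $s_n^{\vartheta-1}\to 0$ so no positive limit arises and $\Lambda=\emptyset$; if $A$ is unbounded and $\vartheta>1$, then $s_n^{\vartheta-1}\to\infty$ so again $\Lambda=\emptyset$; and if $A$ is unbounded and $\vartheta=1$, then $s_n^{\vartheta-1}\equiv 1$ for every admissible sequence, so $\Lambda=\{1\}$, contributing the point $-1$ to the spectrum.

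\textbf{Main obstacle.} There is no real obstacle; the only step requiring a bit of attention is the exhaustive case split for $\Lambda$, making sure to separate the borderline case $\vartheta=1$ from $\vartheta<1$ and $\vartheta>1$, and within $\vartheta=1$ to isolate the boundedness of $A$ that governs whether any sequence $s_n\to\infty$ exists in $\sigma(A)$. Once $\Lambda$ is identified, the theorem follows by reading off Theorem \ref{spec}.
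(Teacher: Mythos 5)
Your proposal is correct and follows exactly the route the paper takes: part (i) is Theorem \ref{zerospect} specialized to $f(s)=s^\vartheta$, and part (ii) is Theorem \ref{spec} together with the computation of $\Lambda$, which the paper records in the sentence immediately after the theorem ($\Lambda$ is nonempty if and only if $\vartheta=1$ and $A$ is unbounded, in which case $\Lambda=\{1\}$). Your case analysis for $\Lambda$ and the rewriting of the threshold $f(s)\geq 2\sqrt{s}$ as $s^{\vartheta-\frac12}\geq 2$ are both accurate, so nothing is missing.
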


Indeed, with reference to \eqref{lambdaset}, the set $\Lambda$ is nonempty if and only if $\vartheta=1$ and the operator
$A$ is unbounded. Besides, if the latter conditions hold, then $\Lambda = \{ 1\}$.

Theorem \ref{specabspolywa}, together with Remark \ref{remeig},
produce an immediate corollary, which provides a characterization of the spectrum of the wave equation~\eqref{concvardamp},
where $A$ is the Laplace-Dirichlet operator.

\begin{corollary}
\label{corcorcor}
Let $A=L$.
Then, the spectrum of the corresponding operator $\A_\vartheta$ is countable and is given by
$$
\sigma(\A_\vartheta)=
\begin{cases}
\bigcup_{n=1}^\infty \big\{\xi_{\lambda_n}^{\pm} \big\}
\cup \{ 0\} & {\text{ if } } \vartheta> 1,\\
\noalign{\vskip2mm}
\bigcup_{n=1}^\infty \big\{\xi_{\lambda_n}^{\pm} \big\}
\cup \{ -1\} & {\text{ if } } \vartheta= 1,\\
\noalign{\vskip2mm}
\bigcup_{n=1}^\infty \big\{\xi_{\lambda_n}^{\pm} \big\}
&  {\text{ if } } \vartheta< 1.
\end{cases}
$$
Besides, the numbers $\xi_{\lambda_n}^{\pm}$
are all eigenvalues of $\A_\vartheta$.
\end{corollary}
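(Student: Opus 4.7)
The plan is to specialize Theorem \ref{specabspolywa} to the case $A=L$, using the standard spectral theory of the Laplace–Dirichlet operator on a bounded domain. First I would recall that $L$ has compact inverse and therefore $\sigma(L)=\{\lambda_n\}_{n=1}^\infty$ is a countable set of positive eigenvalues accumulating only at $\infty$. In particular, $L$ is unbounded, so the dichotomies of Theorem \ref{specabspolywa} are always governed by the exponent $\vartheta$.

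Next, I would compute $\sigma(\A_\vartheta)\setminus\{0\}$ by directly substituting $\sigma(A)=\{\lambda_n\}$ into part (ii) of Theorem \ref{specabspolywa}, which immediately yields the countable set $\bigcup_{n\ge1}\{\xi_{\lambda_n}^{\pm}\}$, augmented by $\{-1\}$ precisely when $\vartheta=1$ (since only in this case $\Lambda\neq\emptyset$, and in fact $\Lambda=\{1\}$ because $f(s)/s=s^{\vartheta-1}$ tends to $1$ for $\vartheta=1$, to $0$ for $\vartheta<1$, and to $\infty$ for $\vartheta>1$).

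It then remains to decide whether $0$ belongs to $\sigma(\A_\vartheta)$, for which I would invoke part (i) of Theorem \ref{specabspolywa}: since $L$ is unbounded, $\A_\vartheta$ is bijective if and only if $\vartheta\le 1$. Hence $0\notin\sigma(\A_\vartheta)$ when $\vartheta\le 1$, while $0\in\sigma(\A_\vartheta)$ when $\vartheta>1$. Combining this with the description of $\sigma(\A_\vartheta)\setminus\{0\}$ yields the three-case formula in the statement, and countability is clear since each piece is at most countable.

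Finally, for the eigenvalue assertion, I would simply apply Remark \ref{remeig}: every $\lambda_n$ is an eigenvalue of $A=L$ (with associated Dirichlet eigenfunction), so the corresponding $\xi_{\lambda_n}^{\pm}$ are eigenvalues of $\A_\vartheta$. Since every step is an application of results already established in the paper together with the well-known spectral structure of $L$, I do not anticipate a genuine obstacle; the only point requiring a little care is the bookkeeping of the three cases $\vartheta<1$, $\vartheta=1$, $\vartheta>1$, and in particular the verification that $\Lambda=\{1\}$ exactly at $\vartheta=1$.
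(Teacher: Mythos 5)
Your proposal is correct and follows the paper's own route: the paper likewise obtains the corollary by specializing Theorem \ref{specabspolywa} to $A=L$ (whose spectrum is the countable set of eigenvalues $\lambda_n\to\infty$, so $L$ is unbounded), noting that $\Lambda=\{1\}$ precisely when $\vartheta=1$, and invoking Remark \ref{remeig} for the eigenvalue assertion. No gaps.
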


Coming back to more general equation~\eqref{polyabseq},
the decay properties of the related semigroup $S_\vartheta(t)$ can
be immediately inferred from the results of Sections~\ref{stabilitySection}-\ref{POLYstabilitySection},
observing that $\ZZ=\emptyset$, due to the choice of the function $f$.
It is readily seen that, if the operator $A$ is bounded, then $S_\vartheta(t)$ is
exponentially stable for every $\vartheta\in\R$ (since condition \eqref{condexpstab} is always satisfied). The more interesting case
when $A$ is unbounded is summarized in the next theorem.

\begin{theorem}
Let $A$ be unbounded. Then, the following hold:
\begin{itemize}
\item[\rm(i)] $S_\vartheta(t)$ is stable for every $\vartheta\in\R$.
\smallskip
\item[\rm(ii)] $S_\vartheta(t)$ is exponentially stable if and only if
 $\vartheta \in [0,1]$.
\smallskip
\item[\rm(iii)] $S_\vartheta(t)$ is semiuniformly stable if and only if $\vartheta \leq 1$.
\item[\rm(iv)] If $\vartheta<0$, then $\psi(t)=\|S_\vartheta(t)\A_\vartheta^{-1}\|_{L(\H)}$ decays polynomially
as $t^{-\frac{1}{2|\vartheta|}}$,
and such a decay rate is optimal.
\end{itemize}
\end{theorem}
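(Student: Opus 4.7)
The plan is to reduce each of the four assertions to the general results of Sections \ref{stabilitySection}--\ref{POLYstabilitySection}, applied to the specific choice $f(s)=s^\vartheta$. The decisive preliminary observation is that, since $\sigma(A)\subset[s_0,\infty)$ with $s_0>0$ and $f(s)=s^\vartheta>0$ on $(0,\infty)$, the zero-set $\ZZ$ associated with $f$ is empty regardless of the value of $\vartheta\in\R$. Moreover, because $A$ is unbounded, $\sigma(A)$ is an unbounded closed subset of $[s_0,\infty)$, so the asymptotic behavior of $s^\vartheta$ as $s\to\infty$ genuinely comes into play.

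For (i), I would invoke Theorem~\ref{stability}(ii): since $\ZZ=\emptyset$, certainly $E_A(\ZZ)=0$ and $\ZZ$ is at most countable, so $S_\vartheta(t)$ is stable. For (ii), I would check the two conditions in \eqref{condexpstab} for $f(s)=s^\vartheta$. On an unbounded $\sigma(A)$ contained in $[s_0,\infty)$, one has $\inf_{s\in\sigma(A)} s^\vartheta>0$ if and only if $\vartheta\geq 0$ (the infimum is $s_0^\vartheta$ if $\vartheta\geq 0$ and tends to $0$ along a sequence $s_n\to\infty$ when $\vartheta<0$), and $\sup_{s\in\sigma(A)} s^{\vartheta-1}<\infty$ if and only if $\vartheta\leq 1$. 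Combining these via Theorem~\ref{stabexpteo} gives exponential stability precisely when $\vartheta\in[0,1]$.

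For (iii), I would appeal to Theorem~\ref{tsus}. Since $\ZZ=\emptyset$ is automatic, semiuniform stability reduces to the subordination condition $\sup_{s\in\sigma(A)} s^{\vartheta-1}<\infty$, which by the same computation as in (ii) is equivalent to $\vartheta\leq 1$. Finally, for (iv), assuming $\vartheta<0$ I would apply the corollary following Theorem~\ref{decayrateopt} with $\alpha=\beta=|\vartheta|$. Indeed, the exponent choice yields
$$
\inf_{s\in\sigma(A)} s^{|\vartheta|}f(s)=\inf_{s\in\sigma(A)} s^{|\vartheta|+\vartheta}=1>0
\and
\sup_{s\in\sigma(A)} s^{|\vartheta|}f(s)=1<\infty,
$$
so both \eqref{furcont} and \eqref{furcont2} hold with the same exponent $|\vartheta|$. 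Since semiuniform stability has already been established in (iii) for $\vartheta<0\leq 1$, the corollary delivers the sharp polynomial decay $\psi(t)\asymp t^{-\frac{1}{2|\vartheta|}}$.

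There is no substantive obstacle in this argument: the heavy lifting has been done in the abstract sections, and the present theorem is essentially a bookkeeping exercise of matching the power $s^\vartheta$ against the conditions $\inf f>0$, $\sup f(s)/s<\infty$, and $s^\alpha f(s)\asymp 1$. The only point requiring mild care is making explicit the role of $A$ being unbounded, which is what forces the thresholds $0$ and $1$ to appear (for a bounded $A$, Remark~\ref{sesexp} would collapse cases (ii) and (iii), and (iv) would be void).
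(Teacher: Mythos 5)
Your proposal is correct and follows essentially the same route as the paper, which likewise derives the theorem by observing that $\ZZ=\emptyset$ for $f(s)=s^\vartheta$ and then reading off (i)--(iv) from Theorem~\ref{stability}, Theorem~\ref{stabexpteo}, Theorem~\ref{tsus}, and the corollary to Theorems~\ref{decayrateteo} and~\ref{decayrateopt} with $\alpha=\beta=|\vartheta|$. Your explicit verification of the thresholds $\vartheta=0$ and $\vartheta=1$ via the unboundedness of $\sigma(A)$ is exactly the bookkeeping the paper leaves implicit.
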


\subsection{Beams and plates without rotational inertia}
Here we consider \eqref{nonrotrwe}, that is, equation~\eqref{platefrac} with $\omega=0$.
Although in this situation the picture is formally equivalent to the previous example
with $\tfrac{\vartheta}{2}$ in place of $\vartheta$, for completeness we provide a detailed description of the results.
Invoking Theorem \ref{EU}, equation~\eqref{nonrotrwe} generates a contraction semigroup
$$
S_\vartheta^0(t)=\e^{t\A{_\vartheta^0}} : \H^0 \to \H^0
$$
on the space
$$\H^0= V^2 \times V.$$
Here, $\A_\vartheta^0$ is the particular instance of the operator
$\A$ obtained by choosing $H=V$, $A=L^2$ and $f(s)=s^\frac{\vartheta}{2}$, that is
$$
\A_\vartheta^0
\left(\begin{matrix}
u\\
v
\end{matrix}\right)
= \left(
\begin{matrix}
v\\
- L^2 u - L^\vartheta v
\end{matrix}
\right)
$$
with domain
$$
\D(\A_\vartheta^0)=
\left\{ (u,v)\in\H^0 \left|
\begin{array}{c}
v\in V^2\\
L^2 u + L^\vartheta v \in V
\end{array}\right.
\right\}.
$$
Being the operator $A=L^2$ unbounded, we have
$\sup_{s\in\sigma(A)}f(s)/\sqrt{s}<\infty$ if and only if $\vartheta\leq 1$. In
this situation (and only in this situation), the domain takes the form
$$
\D(\A_\vartheta^0) = V^4 \times V^2.
$$
Moreover, since the spectrum of the operator $L^2$ is entirely made by eigenvalues and reads
$$
\sigma(L^2) = \bigcup_{n=1}^\infty \{\lambda_n^2\},
$$
an exploitation of Theorems~\ref{zerospect} and \ref{spec}, together with Remark \ref{remeig},
yields a complete description of $\sigma(\A_\vartheta^0)$.

\begin{theorem}
\label{spettronorot}
The spectrum of $\A_\vartheta^0$ is countable and is given by
$$
\sigma(\A_\vartheta^0)=
\begin{cases}
\bigcup_{n=1}^\infty \big\{\xi_{\lambda_n^2}^\pm \big\}
\cup \{ 0\} & {\text{ if } } \vartheta> 2,\\
\noalign{\vskip2mm}
\bigcup_{n=1}^\infty \big\{\xi_{\lambda_n^2}^\pm \big\}
\cup \{ -1\} & {\text{ if } } \vartheta= 2,\\
\noalign{\vskip2mm}
\bigcup_{n=1}^\infty \big\{\xi_{\lambda_n^2}^\pm \big\}
&  {\text{ if } } \vartheta< 2.
\end{cases}
$$
Explicitly,
$$
\xi_{\lambda_n^2}^\pm=
\begin{cases}
\ds-\frac{\lambda_n^{\vartheta}}{2} \pm \frac{\sqrt{\lambda_n^{2\vartheta}-4\lambda_n^2}}{2} &\quad\text{if }\, \lambda_n^{\vartheta-1}\geq 2,\\
\noalign{\vskip1mm}
\ds -\frac{\lambda_n^{\vartheta}}{2} \pm \ii\frac{\sqrt{4\lambda_n^2-\lambda_n^{2\vartheta}}}{2} &\quad\text{if }\, \lambda_n^{\vartheta-1}< 2.
\end{cases}
$$
Besides, the numbers $\xi_{\lambda_n^2}^\pm$
are all eigenvalues of $\A_\vartheta^0$.
\end{theorem}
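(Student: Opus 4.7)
The plan is to specialize the general spectral description of Theorem \ref{spec} (together with Theorem \ref{zerospect} for the origin and Remark \ref{remeig} for the eigenvalue claim) to the concrete choices $A=L^2$ and $f(s)=s^{\vartheta/2}$ that define $\A_\vartheta^0$. The computation is essentially a matter of bookkeeping, split into three ingredients: deciding whether $0$ belongs to $\sigma(\A_\vartheta^0)$, exhibiting the accumulation value $-1/\ell$ coming from the set $\Lambda$ of \eqref{lambdaset}, and rewriting the numbers $\xi_s^\pm$ of Theorem \ref{spec} in terms of the eigenvalues $\lambda_n$ of $L$.

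First I would apply Theorem \ref{zerospect}: with $f(s)/s=s^{\vartheta/2-1}$ on $\sigma(L^2)=\{\lambda_n^2\}$, which is unbounded since $\lambda_n\to\infty$, the sup is finite exactly when $\vartheta\leq 2$. Hence $0\in\rho(\A_\vartheta^0)$ for $\vartheta\leq 2$ and $0\in\sigma(\A_\vartheta^0)$ for $\vartheta>2$. This already accounts for the isolated $\{0\}$ appearing in the first case of the theorem.

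Next I would compute the set $\Lambda$ of \eqref{lambdaset} for this problem. For $s_n=\lambda_n^2\to\infty$ one has $f(s_n)/s_n = \lambda_n^{\vartheta-2}$, whose possible positive limits are: none if $\vartheta<2$ (the ratio tends to $0$); the single value $\ell=1$ if $\vartheta=2$; and none if $\vartheta>2$ (the ratio diverges). Thus Theorem \ref{spec} contributes the extra point $-1/\ell=-1$ only in the critical case $\vartheta=2$, producing the middle case of the statement. Combining with the preceding paragraph, the \emph{nonzero} part of $\sigma(\A_\vartheta^0)$ in every regime equals $\bigcup_{n\geq 1}\{\xi_{\lambda_n^2}^{\pm}\}$ (possibly augmented by $\{-1\}$), and altogether this yields the three-case formula for $\sigma(\A_\vartheta^0)$.

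It remains to write out $\xi_{\lambda_n^2}^\pm$ explicitly. Substituting $s=\lambda_n^2$ and $f(s)=\lambda_n^\vartheta$ into the definition given before Theorem \ref{spec}, the dichotomy $f(s)\gtreqless 2\sqrt s$ translates into $\lambda_n^{\vartheta-1}\gtreqless 2$, and the discriminant $[f(s)]^2-4s$ becomes $\lambda_n^{2\vartheta}-4\lambda_n^2$, giving the two displayed formulas. Finally, since every $\lambda_n^2$ is an eigenvalue of $L^2$ (the spectrum of $L$ consists of the $\lambda_n$), Remark \ref{remeig} immediately certifies that each $\xi_{\lambda_n^2}^\pm$ is an eigenvalue of $\A_\vartheta^0$, closing the argument. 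No serious obstacle is expected: the only mild care is needed in verifying that the values $\lambda_n^{\vartheta-2}$ have no positive accumulation point when $\vartheta\neq 2$, so that $\Lambda$ is correctly identified.
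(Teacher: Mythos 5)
Your proposal is correct and follows exactly the route the paper intends: specializing Theorem \ref{zerospect} to decide when $0\in\sigma(\A_\vartheta^0)$, computing $\Lambda=\{1\}$ precisely when $\vartheta=2$ (and empty otherwise, since $\lambda_n^{\vartheta-2}\to 0$ or $\infty$), substituting $s=\lambda_n^2$, $f(s)=\lambda_n^\vartheta$ into the formula for $\xi_s^\pm$, and invoking Remark \ref{remeig} for the eigenvalue claim. Nothing is missing.
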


Here, we made use of the fact that the set $\Lambda$
defined in \eqref{lambdaset} is nonempty if and only if $\vartheta=2$, and in this case $\Lambda = \{ 1\}$.

Finally, the stability properties of the semigroup $S_\vartheta^0(t)$ can
be inferred from the results of Sections~\ref{stabilitySection}-\ref{POLYstabilitySection},
noting that $\ZZ=\emptyset$.

\begin{theorem}
The following hold:
\begin{itemize}
\item[\rm(i)] $S_\vartheta^0(t)$ is stable for every $\vartheta\in\R$.
\smallskip
\item[\rm(ii)] $S_\vartheta^0(t)$ is exponentially stable if and only if
 $\vartheta \in [0,2]$.
\smallskip
\item[\rm(iii)] $S_\vartheta^0(t)$ is semiuniformly stable if and only if $\vartheta \leq 2$.
\item[\rm(iv)] If $\vartheta<0$, then $\psi(t)=\|S_\vartheta^0(t)(\A_\vartheta^0)^{-1}\|_{L(\H^0)}$ decays polynomially
as $t^{-\frac{1}{|\vartheta|}}$,
and such a decay rate is optimal.
\end{itemize}
\end{theorem}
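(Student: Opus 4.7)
The theorem is a direct application of the abstract results in Sections~\ref{stabilitySection}--\ref{POLYstabilitySection} to the specific choice $A=L^2$, $f(s)=s^{\vartheta/2}$, with $\sigma(A)=\bigcup_{n=1}^\infty\{\lambda_n^2\}$ unbounded. The plan is essentially a checklist: compute the two key quantities $\inf_{s\in\sigma(A)}f(s)$ and $\sup_{s\in\sigma(A)}f(s)/s$ for this $f$, verify that the zero-set $\ZZ$ is empty, and then invoke the corresponding general theorem.

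For item (i), I observe that $f(s)=s^{\vartheta/2}>0$ for all $s>0$, hence $\ZZ=\emptyset$ for every $\vartheta\in\R$. Theorem~\ref{stability}(ii) applies immediately and yields stability of $S_\vartheta^0(t)$.

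For items (ii) and (iii), the key computations on $\sigma(A)=\{\lambda_n^2\}$ are
\[
\inf_{s\in\sigma(A)}f(s)=\inf_{n}\lambda_n^{\vartheta}>0\quad\Longleftrightarrow\quad\vartheta\geq 0,
\]
using $\lambda_n\to\infty$, and
\[
\sup_{s\in\sigma(A)}\frac{f(s)}{s}=\sup_{n}\lambda_n^{\vartheta-2}<\infty\quad\Longleftrightarrow\quad\vartheta\leq 2.
\]
Theorem~\ref{stabexpteo} then gives exponential stability iff both hold, i.e.\ iff $\vartheta\in[0,2]$, proving (ii). Since $\ZZ=\emptyset$ always, Theorem~\ref{tsus} gives semiuniform stability iff $\sup f(s)/s<\infty$, i.e.\ iff $\vartheta\leq 2$, proving (iii).

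For item (iv), assume $\vartheta<0$, so (iii) guarantees that $S_\vartheta^0(t)$ is semiuniformly stable. With $f(s)=s^{\vartheta/2}$ one has, for every real $\gamma$, $s^{\gamma}f(s)=s^{\gamma+\vartheta/2}$ evaluated on $\{\lambda_n^2\}$, hence
\[
\inf_{s\in\sigma(A)}s^{\alpha}f(s)>0\Longleftrightarrow\alpha\geq\tfrac{|\vartheta|}{2},\qquad
\sup_{s\in\sigma(A)}s^{\beta}f(s)<\infty\Longleftrightarrow\beta\leq\tfrac{|\vartheta|}{2}.
\]
Choosing $\alpha=\beta=|\vartheta|/2$, Theorem~\ref{decayrateteo} gives the upper bound $\psi(t)=\mathrm{O}(t^{-1/(2\alpha)})=\mathrm{O}(t^{-1/|\vartheta|})$, and Theorem~\ref{decayrateopt} (applicable since $A$ is unbounded) gives the matching lower bound $\limsup_{t\to\infty}t^{1/|\vartheta|}\psi(t)>0$. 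This establishes (iv) with optimal decay rate $t^{-1/|\vartheta|}$.

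The proof involves no real obstacle beyond bookkeeping: once the identifications $A=L^2$, $f(s)=s^{\vartheta/2}$ are unwound, each statement reduces to an elementary inequality on the monotone divergent sequence $\{\lambda_n\}$. The only mild care is needed to translate the exponent $\vartheta/2$ in $f$ into the exponent $|\vartheta|$ appearing in the decay rate $t^{-1/|\vartheta|}$, which is precisely $t^{-1/(2\alpha)}$ with $\alpha=|\vartheta|/2$.
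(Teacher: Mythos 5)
Your proof is correct and follows exactly the route the paper intends: the paper itself presents this theorem as an immediate consequence of Theorems~\ref{stability}, \ref{stabexpteo}, \ref{tsus}, \ref{decayrateteo} and \ref{decayrateopt} applied to $A=L^2$, $f(s)=s^{\vartheta/2}$ with $\ZZ=\emptyset$, which is precisely your checklist. The exponent bookkeeping ($\alpha=\beta=|\vartheta|/2$ giving the rate $t^{-1/(2\alpha)}=t^{-1/|\vartheta|}$) is also carried out correctly.
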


\subsection{Beams and plates with rotational inertia}
The last example concerns equation~\eqref{platefrac} with $\omega>0$, which can be rewritten in the form \eqref{rotrwe}.
Analogously to the previous examples, appealing to
Theorem \ref{EU} we infer that equation~\eqref{rotrwe} generates a contraction semigroup
$$
S_\vartheta^\omega(t)=\e^{t\A{_\vartheta^\omega}} : \H^\omega \to \H^\omega
$$
on the space
$$\H^\omega= V^2 \times V^1.$$
This time, $\A_\vartheta^\omega$ is the particular instance of
$\A$ corresponding to $H=V^1$,
$A = (1+\omega L)^{-1} L^2$, with $f$ given by \eqref{effe}.
Explicitly,
$$
\A_\vartheta^\omega
\left(\begin{matrix}
u\\
v
\end{matrix}\right)
= \left(
\begin{matrix}
v\\
- (1+\omega L)^{-1} L^2 u - (1+\omega L)^{-1} L^\vartheta v
\end{matrix}
\right)
$$
with domain
$$
\D(\A_\vartheta^\omega)=
\left\{ (u,v)\in\H^\omega \left|
\begin{array}{c}
v\in V^2\\
(1+\omega L)^{-1} L^2 u + (1+\omega L)^{-1} L^\vartheta v \in V^1
\end{array}\right.
\right\}.
$$
Since $A$ is an unbounded operator, a closer look to \eqref{effe} tells that
$\sup_{s\in\sigma(A)}f(s)/\sqrt{s}<\infty$ if and only if $\vartheta \leq \tfrac32$. In
this situation (and only in this situation), the domain factorizes as
$$
\D(\A_\vartheta^\omega) = V^3 \times V^2.
$$
Similarly to the case $\omega=0$, the spectrum of $A$ is entirely made by eigenvalues and is given by
$$
\sigma(A) = \bigcup_{n=1}^\infty \bigg\{\frac{\lambda_n^2}{1+\omega \lambda_n}\bigg\}.
$$
Accordingly, making use of \eqref{effe}, together with Theorems~\ref{zerospect} and \ref{spec} and Remark \ref{remeig},
we readily get a complete description of the spectrum of $\A_\vartheta^\omega$.

\begin{theorem}
\label{spettrorot}
The spectrum of $\A_\vartheta^\omega$ is countable and is given by
$$
\sigma(\A_\vartheta^\omega)=
\begin{cases}
\bigcup_{n=1}^\infty \big\{\xi_{\nu_n}^{\pm} \big\}
\cup \{ 0\} & {\text{ if } } \vartheta> 2,\\
\noalign{\vskip2mm}
\bigcup_{n=1}^\infty \big\{\xi_{\nu_n}^{\pm} \big\}
\cup \{ -1\} & {\text{ if } } \vartheta= 2,\\
\noalign{\vskip2mm}
\bigcup_{n=1}^\infty \big\{\xi_{\nu_n}^{\pm} \big\}
&  {\text{ if } } \vartheta< 2,
\end{cases}
$$
having set
$$\nu_n = \frac{\lambda^2_n }{1+\omega \lambda_n}.$$
Explicitly,
$$
\xi_{\nu_n}^{\pm}=
\begin{cases}
\ds-\frac{f(\nu_n)}{2} \pm \frac{\sqrt{[f(\nu_n)]^2-4\nu_n}}{2} &\quad\text{if } f(\nu_n)\geq 2\sqrt{\nu_n}\,,\\
\noalign{\vskip1mm}
\ds -\frac{f(\nu_n)}{2} \pm \ii\frac{\sqrt{4\nu_n-[f(\nu_n)]^2}}{2} &\quad\text{if } f(\nu_n)< 2\sqrt{\nu_n}\,,
\end{cases}
$$
where $f$ is given by \eqref{effe}.
Besides, the numbers $\xi_{\nu_n}^{\pm}$
are all eigenvalues of $\A_\vartheta^\omega$.
\end{theorem}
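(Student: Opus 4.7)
The plan is to directly specialize the abstract spectral description from Theorems \ref{zerospect} and \ref{spec} to the present situation, with $A=(1+\omega L)^{-1}L^2$ and $f$ given by \eqref{effe}. The first task is to identify $\sigma(A)$. Since $L$, $L^2$ and $(1+\omega L)^{-1}$ commute and share the same eigenbasis $\{e_n\}$ with $Le_n=\lambda_n e_n$, a direct computation yields $Ae_n=\nu_n e_n$ with $\nu_n=\lambda_n^2/(1+\omega\lambda_n)$. Being $A^{-1}$ compact on $V^1$, its spectrum is entirely made of eigenvalues tending to infinity, giving $\sigma(A)=\bigcup_n\{\nu_n\}$. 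From this, Remark \ref{remeig} immediately ensures that all the numbers $\xi_{\nu_n}^{\pm}$ are eigenvalues of $\A_\vartheta^\omega$, so the final assertion of the theorem is automatic once the set-theoretic description of $\sigma(\A_\vartheta^\omega)$ is established.

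Next I would determine whether $0\in\sigma(\A_\vartheta^\omega)$ via Theorem \ref{zerospect}, that is, by deciding when $\sup_{s\in\sigma(A)} f(s)/s<\infty$. From \eqref{effe},
\begin{equation*}
\frac{f(s)}{s} = \bigg(\frac{\omega s + \sqrt{\omega^2 s^2 + 4 s}}{2}\bigg)^{\vartheta-2}.
\end{equation*}
The quantity in parentheses behaves like $\omega s$ as $s\to\infty$ (and stays away from zero since $\sigma(A)$ is bounded below by $\nu_1>0$), so $f(s)/s$ is bounded on $\sigma(A)$ if and only if $\vartheta\leq 2$. Thus $0\in\sigma(\A_\vartheta^\omega)$ precisely when $\vartheta>2$, matching the first branch of the statement.

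The remaining nontrivial step is to compute the accumulation set $\Lambda$ defined in \eqref{lambdaset}, since the extra point $-1/\ell$ in Theorem \ref{spec} arises exactly from limits of $f(\nu_n)/\nu_n$ along subsequences $\nu_n\to\infty$. Using the asymptotic analysis above, one gets $f(s)/s\to 0$ if $\vartheta<2$, $f(s)/s\equiv 1$ if $\vartheta=2$ (indeed $(\,\cdot\,)^{0}=1$, so $f(s)=s$ identically), and $f(s)/s\to\infty$ if $\vartheta>2$. Since $\Lambda$ only collects \emph{finite} positive limits, one obtains $\Lambda=\{1\}$ when $\vartheta=2$ and $\Lambda=\emptyset$ otherwise. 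Combined with the previous step, this produces exactly the three cases in the statement, namely the extra point is $\{0\}$ for $\vartheta>2$, $\{-1\}$ for $\vartheta=2$, and nothing for $\vartheta<2$.

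Finally, the explicit formula for $\xi_{\nu_n}^{\pm}$ is obtained by plugging $s=\nu_n$ into the definition of $\xi_s^{\pm}$ given right before Theorem \ref{spec}, with $f$ as in \eqref{effe}; this is a purely mechanical substitution. The delicate point of the whole argument is the identification of $\Lambda$ at the borderline value $\vartheta=2$: one must verify that $f\equiv\mathrm{id}$ on $\sigma(A)$ in that case so that the limit is genuinely attained along every diverging sequence, rather than being a spurious accumulation. Apart from this, everything reduces to bookkeeping of the already established general Theorems \ref{zerospect} and \ref{spec} together with Remark \ref{remeig}.
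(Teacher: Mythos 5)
Your proposal is correct and follows essentially the same route as the paper: identify $\sigma(A)=\bigcup_n\{\nu_n\}$ (all eigenvalues, tending to infinity), check via Theorem \ref{zerospect} that $0\in\sigma(\A_\vartheta^\omega)$ exactly when $\vartheta>2$, compute $\Lambda=\{1\}$ for $\vartheta=2$ and $\Lambda=\emptyset$ otherwise from the asymptotics $f(s)/s=\bigl(\tfrac{\omega s+\sqrt{\omega^2s^2+4s}}{2}\bigr)^{\vartheta-2}$, and then invoke Theorem \ref{spec} and Remark \ref{remeig}. This is precisely the argument the paper gives in the surrounding text (which records the same asymptotics in the form $f(s)\sim\omega^{\vartheta-2}s^{\vartheta-1}$ as $s\to\infty$).
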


Exactly as in the previous example, we have exploited the fact that the set $\Lambda$
defined in \eqref{lambdaset} is nonempty if and only if $\vartheta=2$, and in this case $\Lambda = \{ 1\}$.

We conclude by summarizing the stability properties of $S_\vartheta^\omega(t)$
which, again, can be readily inferred from the results
of Sections~\ref{stabilitySection}-\ref{POLYstabilitySection},
observing that $\ZZ=\emptyset$ and
$$
f(s) \sim \omega^{\vartheta-2} s^{\vartheta-1}\quad\,\, \text{as } s \to\infty.
$$

\begin{theorem}
The following hold:
\begin{itemize}
\item[\rm(i)] $S_\vartheta^\omega(t)$ is stable for every $\vartheta\in\R$.
\smallskip
\item[\rm(ii)] $S_\vartheta^\omega(t)$ is exponentially stable if and only if
 $\vartheta \in [1,2]$.
\smallskip
\item[\rm(iii)] $S_\vartheta^\omega(t)$ is semiuniformly stable if and only if $\vartheta \leq 2$.
\item[\rm(iv)] If $\vartheta<1$, then $\psi(t)=\|S_\vartheta^\omega(t)(\A_\vartheta^\omega)^{-1}\|_{L(\H^\omega)}$ decays polynomially
as $t^{-\frac{1}{2(1-\vartheta)}}$,
and such a decay rate is optimal.
\end{itemize}
\end{theorem}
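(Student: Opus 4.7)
The plan is to reduce the four items to the abstract criteria proved earlier in the paper, treating the concrete $f$ in \eqref{effe} as a perturbation of its asymptotic monomial profile $f(s)\sim\omega^{\vartheta-2}s^{\vartheta-1}$. Since $f(s)>0$ for every $s>0$ and $\sigma(A)\subset[\nu_1,\infty)$ with $\nu_1>0$, the zero-set $\ZZ$ is empty, and the spectrum $\sigma(A)=\{\nu_n\}$ is countable. Therefore item (i) is an immediate application of Theorem~\ref{stability}(ii): since $E_A(\ZZ)=0$ and $\ZZ$ is (trivially) countable, the semigroup $S_\vartheta^\omega(t)$ is stable regardless of $\vartheta$. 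Item (iii) is just as quick: by Theorem~\ref{tsus} semiuniform stability is equivalent to $\ZZ=\emptyset$ plus $\sup_{s\in\sigma(A)} f(s)/s<\infty$; using the asymptotic $f(s)/s\sim \omega^{\vartheta-2}s^{\vartheta-2}$ together with the continuity of $f$ on the compact part $[\nu_1,R]$, this supremum is finite if and only if $\vartheta\leq 2$.

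For item (ii), I would apply Theorem~\ref{stabexpteo}, which requires both $\inf_{s\in\sigma(A)}f(s)>0$ and $\sup_{s\in\sigma(A)} f(s)/s<\infty$. The second condition has just been analyzed; for the first, since $f$ is positive and continuous, the infimum can only be lost at infinity. Using again $f(s)\sim \omega^{\vartheta-2}s^{\vartheta-1}$ as $s\to\infty$, $\inf f>0$ precisely when $\vartheta\geq 1$. Combining the two ranges yields $\vartheta\in[1,2]$, as claimed.

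For the polynomial decay statement (iv) with $\vartheta<1$, I would apply the two-sided decay rate Corollary in Section~\ref{POLYstabilitySection}, which combines Theorems~\ref{decayrateteo} and~\ref{decayrateopt}. The operator $A$ is unbounded and $S_\vartheta^\omega(t)$ is semiuniformly stable by (iii) (since $\vartheta<1\leq 2$). It is then enough to find $\alpha=\beta>0$ such that
\begin{equation*}
\inf_{s\in\sigma(A)} s^\alpha f(s)>0\and \sup_{s\in\sigma(A)} s^\beta f(s)<\infty.
\end{equation*}
Using $s^\alpha f(s)\sim \omega^{\vartheta-2}s^{\alpha+\vartheta-1}$, both inequalities hold with the choice $\alpha=\beta=1-\vartheta>0$. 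The Corollary then yields the optimal decay rate $\psi(t)\sim t^{-1/(2\alpha)}=t^{-1/(2(1-\vartheta))}$.

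The only step requiring any care is verifying the asymptotic $f(s)\sim\omega^{\vartheta-2}s^{\vartheta-1}$ from the explicit formula \eqref{effe}: as $s\to\infty$, the inner quantity $\tfrac12(\omega s+\sqrt{\omega^2 s^2+4s})$ behaves like $\omega s$, so $f(s)=s(\omega s)^{\vartheta-2}(1+o(1))$. Once this asymptotic is in hand (and is already recorded in the paragraph preceding the theorem statement), everything else is a bookkeeping check against the abstract criteria, with no genuine obstacle.
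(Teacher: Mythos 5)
Your proposal is correct and follows exactly the route the paper intends: the paper gives no separate proof beyond noting that the claims are ``readily inferred'' from the abstract results of Sections 9--12 once one observes $\ZZ=\emptyset$ and $f(s)\sim\omega^{\vartheta-2}s^{\vartheta-1}$ as $s\to\infty$, which is precisely the reduction you carry out (with the correct choices $\alpha=\beta=1-\vartheta$ in item (iv)).
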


\begin{remark}
Note that the presence of the rotational inertia $\omega>0$ changes
the exponential stability interval of $\vartheta$ from $[0,2]$ to $[1,2]$.
\end{remark}

\section{Further Developments}

\noindent
We finally discuss some possible developments,
which might be deepened in future works.

\medskip
\noindent
{\bf I.} As pointed out in Remark \ref{remopen}, it would be interesting to investigate
the stability of $S(t)$ where the operator $A^{-1}$ is not compact and
$\ZZ$ is an uncountable set with null spectral measure.
As shown in the proof of Theorem \ref{stability}, if $E_A(\ZZ)=0$ then it is always
true that no eigenvalues of $\A$ lie on the imaginary axis. Nevertheless,
Corollary~\ref{speccoro} tells that the set $\sigma(\A)\cap \ii\R$ is
uncountable. Hence, in this situation, the Arendt-Batty-Lyubich-V\~u stability criterion cannot be applied.

\medskip
\noindent
{\bf II.} Another problem concerns the behavior at infinity of the resolvent operator
$(\ii\lambda - \A)^{-1}$ on the imaginary axis when the exponential stability of $S(t)$ occurs.
Due to the Gearhart-Pr\"{u}ss theorem \cite{GER,Pru}, such a resolvent operator
is (defined and) bounded on the whole imaginary axis.
If for instance
$$\|(\ii\lambda - \A)^{-1}\|_{L(\H)}={\rm O}\bigg(\frac{1}{\log|\lambda|}\bigg)\quad\,\, \text{as } |\lambda|\to\infty$$
then $S(t)$
is eventually differentiable (see e.g.\ \cite[Theorem 4.9]{Pazy}), while
if
$$\|(\ii\lambda - \A)^{-1}\|_{L(\H)}={\rm O}\bigg(\frac{1}{|\lambda|}\bigg)\quad\,\, \text{as } |\lambda|\to\infty$$ then $S(t)$
is analytic (see e.g.\ \cite[Theorem 1.3.3]{LZH}). Among other reasons,
such regularity properties have a certain relevance since eventually differentiable semigroups or analytic semigroups are
know to fulfill the {\it spectrum determined growth} (SDG) condition (see e.g.\ \cite[Corollary 3.12]{ENGNAG}).
In the notation of Section \ref{ExpSec}, this means that the growth bound $\omega_*$ of $S(t)$ equals the spectral bound $\sigma_*$
of its infinitesimal generator $\A$. Note that, in the proof of the necessity part of Theorem \ref{stabexpteo},
we have already shown that the SDG condition is satisfied
with $\omega_*=\sigma_*=0$ whenever condition \eqref{condexpstab} fails.

\medskip
\noindent
{\bf III.} An intriguing and possibly challenging task would be to investigate
semiuniform (or semiuniform-like) decay rates of $S(t)$ which are not necessarily of
polynomial type, making use of recent abstract results
obtained in \cite{BattyJEMS} (see also \cite{SEI}) dealing with
fine decay scales of strongly continuous semigroups.

\bigskip
\section*{Appendix: Portraits of the Spectra}

\theoremstyle{plain}
\renewcommand{\thesubsection}{A.\arabic{subsection}}
\setcounter{equation}{0}
\setcounter{subsection}{0}
\renewcommand{\theequation}{A.\arabic{equation}}

\noindent
We illustrate some particular instances of the spectra of the operators $\A_\vartheta$, $\A_\vartheta^0$
and $\A_\vartheta^\omega$ discussed in Section \ref{SECAPPL}.

\smallskip
\noindent
{\bf Portraits of $\boldsymbol{\sigma(\A_\vartheta)}$.}
Choosing $H=V=L^2(0,\pi)$ and $A=L$, the eigenvalues $\lambda_n$
are equal to
$$\lambda_n= n^2,\quad\,\, n=1,2,3,\ldots
$$
Accordingly, the eigenvalues $\xi_{\lambda_n}^{\pm}$ of $\A_\vartheta$ take the form
$$
\xi_{\lambda_n}^{\pm}=
\begin{cases}
\ds-\frac{n^{2\vartheta}}{2} \pm \frac{\sqrt{n^{4\vartheta}-4n^2}}{2} &\quad\text{if }\, n^{2\vartheta-1}\geq 2,\\
\noalign{\vskip1mm}
\ds -\frac{n^{2\vartheta}}{2} \pm \ii\frac{\sqrt{4n^2-n^{4\vartheta}}}{2} &\quad\text{if }\, n^{2\vartheta-1}< 2.
\end{cases}
$$
Making use of Corollary \ref{corcorcor} and the software {\tt Mathematica}{\tiny \textregistered},
we have the following pictures of $\sigma(\A_\vartheta)$,  corresponding to the cases $\vartheta=-1,0,1$.

\begin{figure}[ht]\begin{center}
\includegraphics[width=\wws]{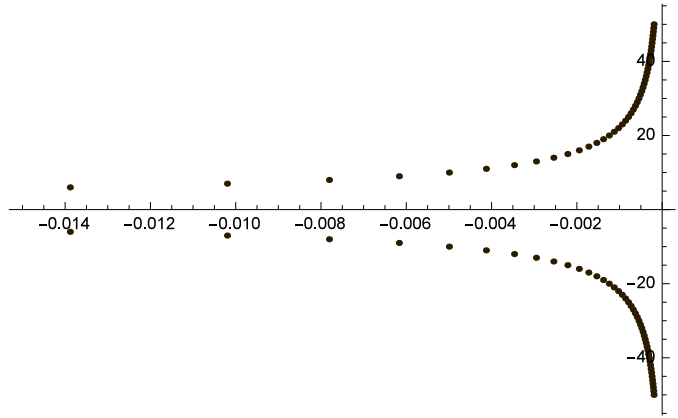}
\caption{\small{The case $\vartheta=-1$ [``subdamped" wave equation].}}
\end{center}
\end{figure}
\begin{figure}[ht]\begin{center}

\medskip

\includegraphics[width=\wws]{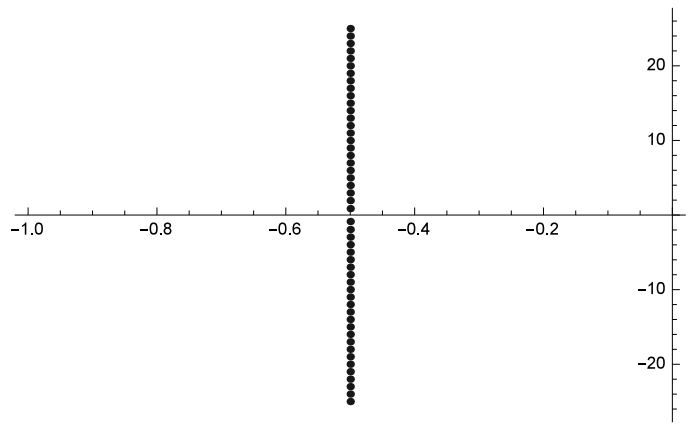}
\caption{\small{The case $\vartheta=0$ [weakly damped wave equation].}}
\end{center}
\end{figure}
\begin{figure}[ht]\begin{center}

\medskip

\includegraphics[width=\wwp]{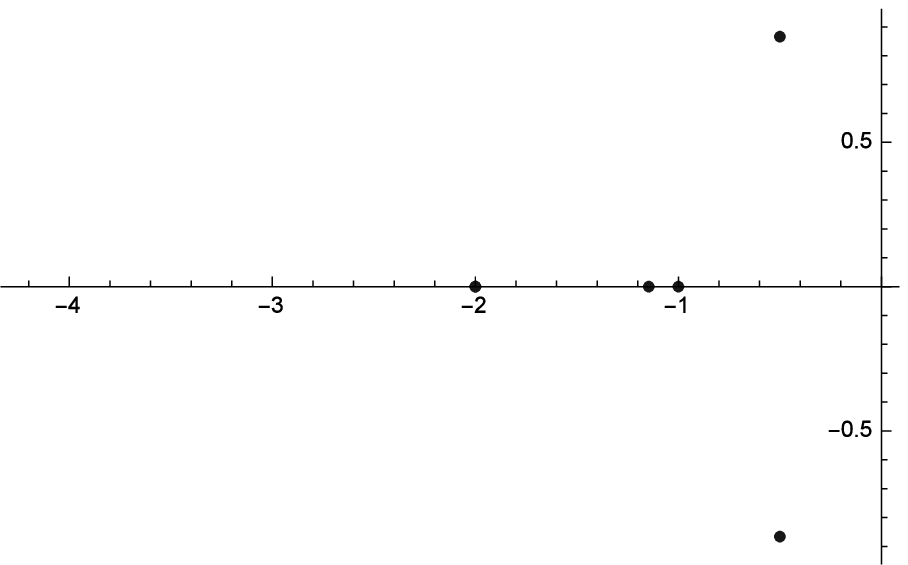}\qquad\,\, \includegraphics[width=\wwp]{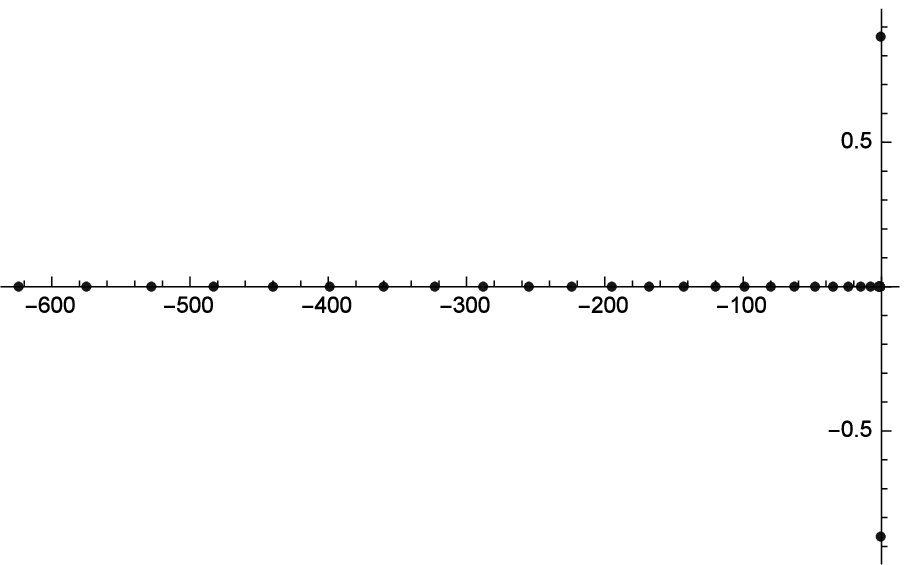}
\caption{\small{The case $\vartheta=1$ [strongly damped wave equation]. Behavior around zero (left)
and global behavior (right).}}
\end{center}
\end{figure}

\newpage
\noindent
{\bf Portraits of $\boldsymbol{\sigma(\A_\vartheta^0)}$.}
Choosing $H=V=L^2(0,\pi)$, the eigenvalues $\lambda_n^2$
of the operator $A=L^2$
are equal to
$$\lambda_n^2= n^4,\quad\,\, n=1,2,3,\ldots
$$
Therefore, the eigenvalues $\xi_{\lambda_n^2}^{\pm}$ of $\A_\vartheta^0$ are given by
$$
\xi_{\lambda_n^2}^{\pm}=
\begin{cases}
\ds-\frac{n^{2\vartheta}}{2} \pm \frac{\sqrt{n^{4\vartheta}-4n^4}}{2} &\quad\text{if }\, n^{2(\vartheta-1)}\geq 2,\\
\noalign{\vskip1.3mm}
\ds -\frac{n^{2\vartheta}}{2} \pm \ii\frac{\sqrt{4n^4-n^{4\vartheta}}}{2} &\quad\text{if }\, n^{2(\vartheta-1)}< 2.
\end{cases}
$$
Making use
of Theorem \ref{spettronorot} and the software {\tt Mathematica}{\tiny \textregistered},
we get the following pictures of $\sigma(\A_\vartheta^0)$, corresponding to the choices $\vartheta=-1,0,1$.

\begin{figure}[ht]\begin{center}
\includegraphics[width=\wws]{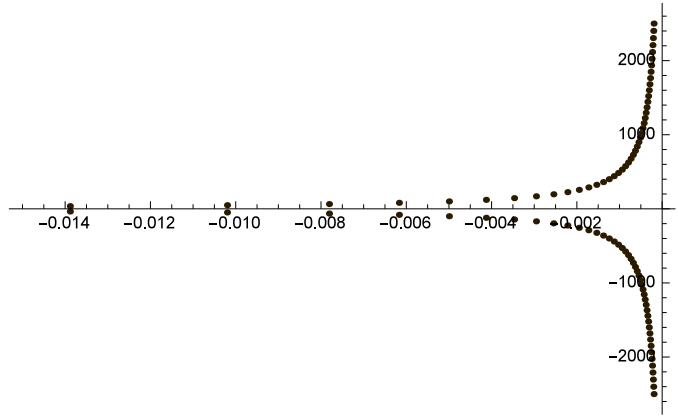}
\caption{\small{The case $\vartheta=-1$ [``subdamped" beam equation].}}
\end{center}
\end{figure}

\medskip

\begin{figure}[ht]\begin{center}
\includegraphics[width=\wws]{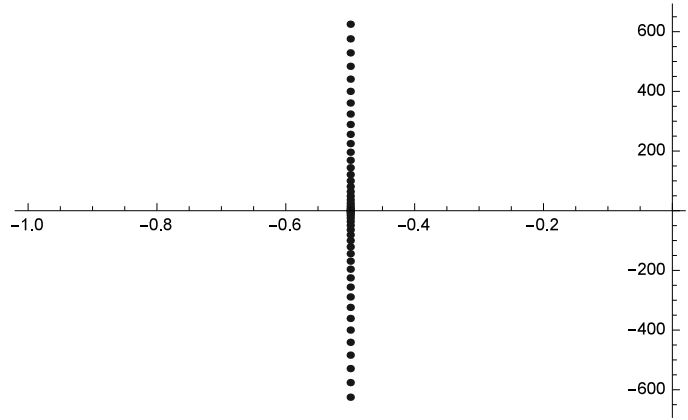}
\caption{\small{The case $\vartheta=0$ [beam equation with frictional damping].}}
\end{center}
\end{figure}

\medskip

\begin{figure}[ht]\begin{center}
\includegraphics[width=\wwp]{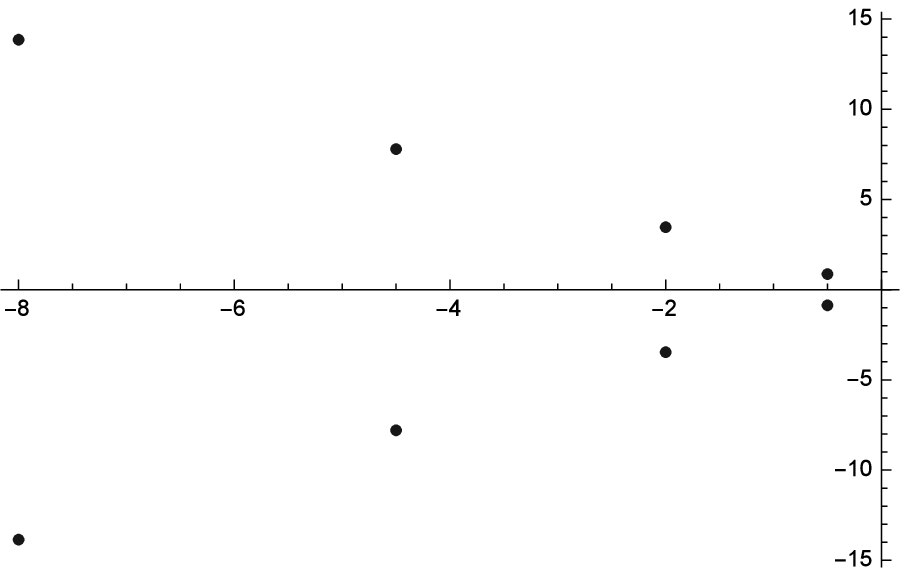}\qquad\,\,\includegraphics[width=\wwp]{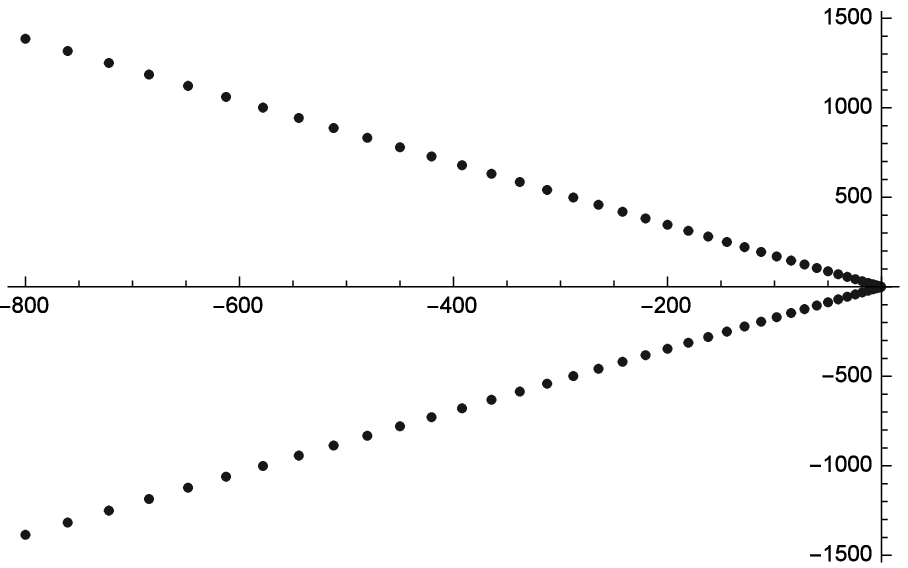}
\caption{\small{The case $\vartheta=1$ [beam equation with Kelvin-Voigt damping]. Behavior around zero (left)
and global behavior (right).}}
\end{center}
\end{figure}

\newpage

\noindent
{\bf Portraits of $\boldsymbol{\sigma(\A_\vartheta^\omega)}$.}
Choosing $H=V^1=H_0^1(0,\pi)$,
 the eigenvalues $\nu_n$
of the operator $A = (1+ \omega L)^{-1} L^2$ are equal to
$$
\nu_n = \frac{n^4}{1+\omega n^2}.
$$
Hence, the eigenvalues $\xi_{\nu_n}^\pm$ of $\A_\vartheta^\omega$ read
$$
\xi_{\nu_n}^{\pm}=
\begin{cases}
\ds-\frac{1}{2}\Big[f\big(\tfrac{n^4}{1+\omega n^2}\big) \mp
\sqrt{\big[f\big(\tfrac{n^4}{1+\omega n^2}\big)\big]^2-\tfrac{4n^4}{1+\omega n^2}}\Big]
&\quad\text{if } f\big(\tfrac{n^4}{1+\omega n^2}\big)\geq \frac{2n^2}{\sqrt{1+\omega n^2}},\\
\noalign{\vskip2.3mm}
\ds -\frac{1}{2}\Big[f\big(\tfrac{n^4}{1+\omega n^2}\big) \mp \ii
\sqrt{\tfrac{4n^4}{1+ \omega n^2}-\big[f\big(\tfrac{n^4}{1+\omega n^2}\big)\big]^2}\Big]
&\quad\text{if } f\big(\tfrac{n^4}{1+\omega n^2}\big)< \frac{2n^2}{\sqrt{1+\omega n^2}},
\end{cases}
$$
where $f$ is given by \eqref{effe}.
Making use of Theorem \ref{spettrorot} and
the software {\tt Mathematica}{\tiny \textregistered},
we obtain the following pictures of $\sigma(\A_\vartheta^\omega)$, corresponding to the choices $\vartheta=0,1$.

\begin{figure}[ht]\begin{center}
\includegraphics[width=\wws]{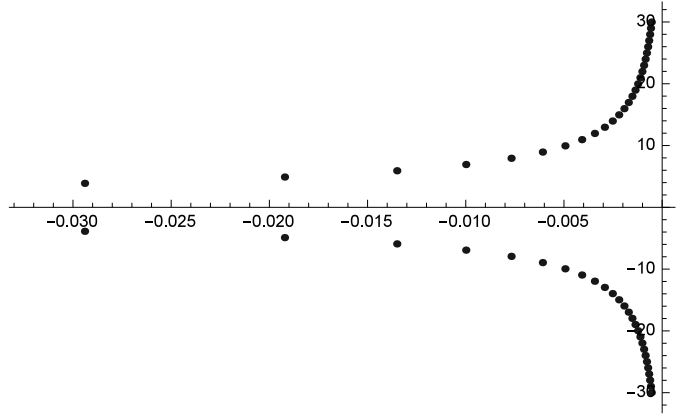}
\caption{\small{The case $\vartheta=0$ and $\omega=1$ [beam equation with rotational inertia and frictional damping].}}
\end{center}
\end{figure}

\medskip

\begin{figure}[ht]\begin{center}
\includegraphics[width=\wwp]{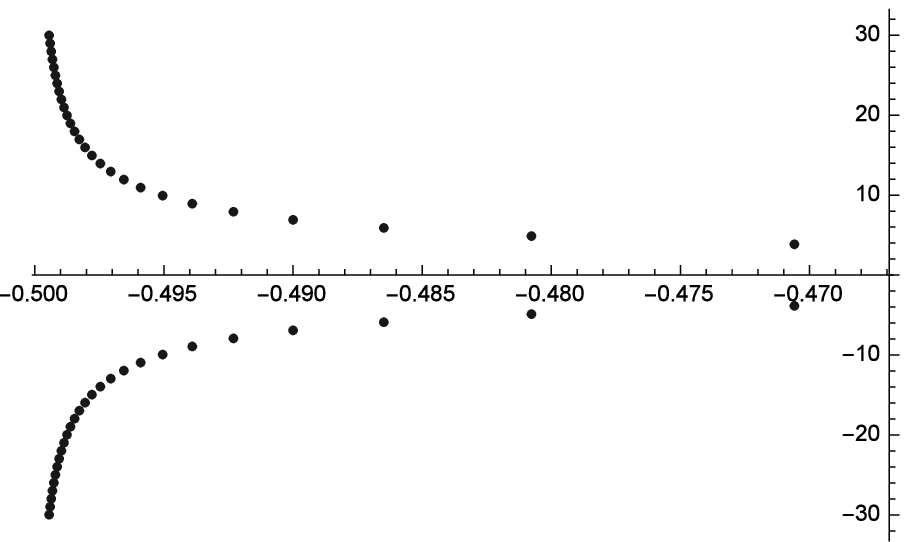}\qquad\,\,\includegraphics[width=\wwp]{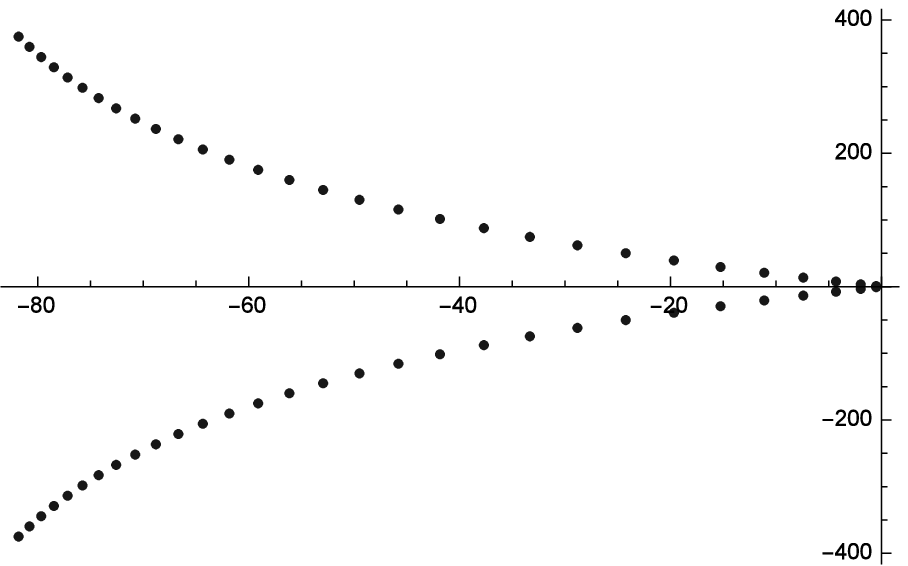}
\caption{\small{The case $\vartheta=1$ with $\omega=1$ (left)
and $\omega=1/200$ (right) [beam equation with rotational inertia and Kelvin-Voigt damping].
Note that the spectrum becomes close to two straight lines as $\omega\to0$. Compare with Figure~6.}}
\end{center}
\end{figure}



\end{document}